\crefname{hypo}{Hypothesis}{Hypotheses}
\DeclareMathAlphabet{\mathpzc}{OT1}{pzc}{m}{it}
\DeclareMathOperator\supp{supp}
\DeclareMathOperator{\sgn}{sgn}
\DeclareMathOperator{\spn}{span}
\newcommand{\setsep}{\,|\,}
\newcommand{\sph}{\mathbb{S}}
\newcommand{\R}{\mathbb{R}}
\newcommand{\C}{\mathbb{C}}
\newcommand{\N}{\mathbb{N}}
\newcommand{\Z}{\mathbb{Z}}
\newcommand{\Id}{\mathbf{I}}
\title{Spherical  framelets from spherical designs\thanks{Submitted to editors DATE.
\funding{This work was supported in part by the Research Grants
Council of the Hong Kong Special Administrative Region, China, under
Project CityU 11309122 and in part by the City University of Hong Kong under Project 7005497 and
Project 7005603.}}}
\author{Yuchen Xiao\thanks{Department of Mathematics, City University of Hong Kong, Tat Chee Avenue, Kowloon Tong, Hong Kong
(\email{yc.xiao@my.cityu.edu.hk}).}
\and Xiaosheng Zhuang\thanks{Department of Mathematics, City University of Hong Kong, Tat Chee Avenue, Kowloon Tong, Hong Kong
(\email{xzhuang7@cityu.edu.hk}).}}
\DeclareMathOperator{\diag}{diag}
\begin{document}
\nolinenumbers

\maketitle

% REQUIRED
\begin{abstract}
%  This is an example SIAM article.
In this paper, we investigate in detail the structures of the variational characterization $A_{N,t}$ of the spherical $t$-design,  its gradient $\nabla A_{N,t}$, and its Hessian $\mathcal{H}(A_{N,t})$ in terms of fast spherical harmonic transforms. Moreover, we propose solving the  minimization problem of $A_{N,t}$ using the trust-region method to provide spherical $t$-designs with large values of $t$. Based on the obtained spherical $t$-designs, we develop (semi-discrete) spherical tight framelets as well as their truncated systems  and their fast spherical framelet transforms for the practical spherical signal/image processing. Thanks to the large spherical $t$-designs and localization property of our spherical framelets,  we are able to provide signal/image denoising using local thresholding techniques based on a fine-tuned spherical cap restriction. Many numerical experiments are conducted to demonstrate the efficiency and effectiveness of our spherical framelets, including Wendland function approximation,  ETOPO data processing, and spherical image denoising.
\end{abstract}

% REQUIRED
\begin{keywords}
  Tight framelets, spherical framelets, spherical $t$-designs, fast spherical harmonic transforms, fast spherical framelet transforms, trust-region method, Wendland functions, ETOPO1, spherical signals/images, image/signal denoising.
\end{keywords}

% REQUIRED
\begin{AMS}
  42C15, 42C40, 58C35, 65D18, 65D32
\end{AMS}

%%%%%%%%%%%%%%%%%%%%%%%%%%%%%%%%%%%%%%%%%%%%%%%%%%%%%
%%%%%%%%%%%%%%%%%%%%%%%%%%%%%%%%%%%%%%%%%%%%%%%%%%%%%
%%%%%%%%%%%%%%%%%%%%%%%%%%%%%%%%%%%%%%%%%%%%%%%%%%%%%
\section{Introduction and motivation}
\label{sec:intro}

Spherical data commonly appear in many real-world applications such as the navigation data in the global positioning system (GPS), the global climate change estimation in geography, the planet study in astronomy, the cosmic microwave background (CMB) data analysis in cosmology, the virus analysis in biology and molecular chemistry, the $360^\circ$ panoramic images and videos in virtual reality and computer vision,  and so on so forth. In many of these real-world application scenarios,  the observed spherical data could be large in terms of size, irregular in the sense of function property, or incomplete and noisy due to machine and environment deficiency. How to represent such data ``well'' so that one can process them ``efficiently'' is the key for solving these real-world problems  successfully. Spherical data  are necessarily discrete and can typically modeled as samples on  spherical meshes or spherical point sets \cite{chen2018spherical}. In this paper, we focus on the study  of spherical data defined on a special type of structured point sets on the unit sphere, that is, the spherical $t$-designs, and  the construction of multiscale representation systems, namely, the semi-discrete spherical framelet systems, based on the spherical $t$-designs, for the sparse representation and efficient processing  of spherical data.

How to ``nicely'' distribute points on the unit sphere lies in the heart of many fundamental problems of mathematics and physics such as the best packing problems \cite{conway2013sphere}, the minimal energy problems \cite{korevaar1993spherical}, the optimal configurations related to  Smale's 7th Problem \cite{smale2000mathematics}, and so on. It is well-known that it is highly non-trivial to define a so-called ``good'' point set on the unit sphere $\mathbb{S}^d:=\{\bm x\in \mathbb{R}^{d+1}\setsep \|\bm x\| =1\}$ when the dimension $d\ge2$, where $\lVert\cdot\rVert$ is the Euclidean norm. Many real-world problems can be interpreted  as a partial differential equation (PDE) or a PDE system and their numerical solutions  (e.g., using finite-element methods) are then sought to address the related problems. Numerical integrations (quadrature rules) hence play an important role in such numerical solutions of PDEs. From the view point of numerical integrations on the sphere, that is, finding  a quadrature (cubature) rule $Q_N:=\{(\bm x_i, w_i)\in\mathbb{S}^d\times \mathbb{R}\setsep i=1,\ldots,N\}$ such that
$
\frac{1}{|\mathbb{S}^d|}\int_{\mathbb{S}^d} f(\bm x)d\mu_d(\bm x)\approx \sum_{i=1}^N w_i f(\bm x_i),
$
where ${\mu_d}$ denotes the surface measure on $\mathbb{S}^d$ such that $\mu_d(\mathbb{S}^d)=:|\mathbb{S}^d|$ is the surface area of $\mathbb{S}^d$,
one can define a ``good'' point set $X_N:=\{\bm x_1,\ldots,\bm x_N\} \subset \mathbb{S}^d$ in the sense of  requiring  the weights $w_i\equiv \frac{1}{N}$ for all $i$ for certain class of functions $f$.   More precisely,  let $\Pi_t:=\Pi_t(\mathbb S^d)$ denote the space of $(d+1)$-variate polynomials with total degree at most $t$ restricted on $\mathbb{S}^d$. The point set $X_N$ is said to be ``good'' if it satisfies
\begin{equation}\label{def:spd}
\frac{1}{N}\sum_{i=1}^N p(\bm x_i)=\frac{1}{|\mathbb{S}^d|}\int_{\mathbb{S}^d} p(\bm x)\,\mathrm{d}\mu_d(\bm x)\qquad\forall p\in\Pi_t.
\end{equation}
Such a  point configuration $X_N$, is called a  \emph{spherical $t$-design}, which was established by Delsarte, Goethals and Seidel \cite{delsarte1977spherical} in 1977.
In other words, a spherical $t$-design $X_N$ is an equal weight polynomial-exact quadrature rule associated with $\Pi_t$. We refer to the excellent survey paper \cite{bannai2009survey} by Bannai and Bannai on the topic of spherical designs.

A natural question immediately follows:  Does such a spherical $t$-design $X_N$ exist? It turns out that such a question leads to many profound mathematical results. Delsarte et al. \cite{delsarte1977spherical} showed that the lower bound of a spherical $t$-design $X_N\subset \mathbb{S}^d$ on the number $N$ of points  for any degree $t\in\mathbb{N}$   satisfies $N\ge N^*(d,t)$, where $N^*(d,t) = 2{d+\frac{t-1}{2} \choose d}$ if $t$ is odd and $N^*(d,t)={d+\frac{t}{2} \choose d}+{d+\frac{t}{2}-1 \choose d}$ if $t$ is even.
%\begin{align*}
%N\ge N^{*}(d,t)=
%\begin{cases}
%2{d+\frac{t-1}{2} \choose d},&\text{$t$ is odd,}\\
%\\
%{d+\frac{t}{2} \choose d}+{d+\frac{t}{2}-1 \choose d},&\text{$t$ is even.}
%\end{cases}
%\end{align*}
When the lower bound is attained, it is called a \emph{tight} spherical $t$-design. Note that on the circle $\mathbb{S}^1$, the vertices of a regular $(t+1)$-gon form a tight spherical $t$-design, that is,  $N^*(1,t) = t+1$. However, it is noteworthy that tight spherical $t$-designs with $N^{*}(d,t)$ points exists only for $t={1,2,3,4,5,7,11}$ with different restrictions on the dimension $d$ \cite{bannai2009survey,nebe2013tight}. On the other hand, Seymour and Zaslavsky \cite{seymour1984averaging} proved (non-constructively) that a spherical $t$-design exists for any $t$ if $N$ is sufficiently large. Wagner \cite{wagner1991averaging} gave the first feasible upper bounds with $N=\mathcal O(t^{Cd^4})$.
%(see also Bajnok's result with $N =\mathcal O(t^{Cd^3})$  \cite{bajnok1992construction}).
Korevaar and Meyers \cite{korevaar1993spherical} further showed that spherical $t$-designs can be done with $N=\mathcal O(t^{d(d+1)/2})$ and conjectured that $N=\mathcal O(t^d)$. Using  topological degree theory, Bondarenko, Radchenko, and Viazovska \cite{bondarenko2013optimal} proved that spherical $t$-designs indeed exists for $N=\mathcal O(t^d)$. They further showed that $X_N$ can be {\em well-separated} in the sense that the minimal  separation distance $\delta_{X_N}:=\min_{1\le i<j\le N}\|\bm x_i -\bm x_j\|$  is of order  $ \mathcal{O}(N^{-1/d})$ \cite{bondarenko2015well}. Together with $N^*(d,t)=\mathcal O(t^d)$, one  implies that $c_d t^d\le N \le C_d t^d$ for some constants $C_d\ge c_d>0$ depending on $d$ only and can conclude that the optimal asymptotic order is $t^d$.
%The best constant for $C_d$ is yet to be determined.

Most of the real-world  spherical data mentioned in the beginning are typically spherical signals on the 2-sphere $\mathbb{S}^2$. In this paper and in what follows, we are interested  in spherical signal processing coming from many real-world applications. Hence,  we restricted ourselves in the case of $d=2$  and consider the spherical $t$-design $X_N$ on $\mathbb{S}^2$ obtained from numerical optimization methods. Hardin and Sloane \cite{hardin1996mclaren} have extensively investigated spherical $t$-designs on $\mathbb{S}^2$ and suggested a sequence of putative spherical $t$-designs with $\frac{1}{2}t^2+o(t^2)$ points. Numerical calculation of spherical $t$-designs using multiobjective optimization was studied by Maier \cite{maier1999numerical}. Numerical methods with computer-assisted proofs for computational spherical $t$-designs have been developed through nonlinear equations and optimization problems in  \cite{an2010well,chen2011computational,chen2006existence}.
Note that ${\rm dim}(\Pi_t)=(t+1)^2$ on $\sph^2$. An {\em extremal point set} is a set of $(t+1)^2$ points  on $\mathbb{S}^2$ that maximizes the determinant of a basis matrix for an arbitrary basis of $\Pi_t$. Sloan and Womersley \cite{sloan2004extremal} showed that extremal point set has very nice geometric properties as the points are well-separated. By finding the solutions of systems of underdetermined equations and using the Krawczyk-type interval arithmetic technique, Chen and Womersley  in \cite{chen2006existence} verified  the existence of spherical $t$-designs with $(t+1)^2$ points for small $t$.  In  \cite{chen2011computational}, Chen, Frommer, and Lang further improved the interval arithmetic technique and showed that spherical $t$-designs with $(t+1)^2$ points exist for all degrees $t$ up to 100.   The spherical $t$-designs with $(t+1)^2$ points are called \emph{extremal spherical $t$-designs} and are also studied in \cite{an2010well}.  Womersley \cite{womersley2018efficient} constructed {\em symmetric spherical $t$-designs} with $N=\frac{t^2+t+4}{2}$ for $t$ up to 325. The interval arithmetic method \cite{chen2011computational} requires $\mathcal O(t^6)$ time complexity and thus prevents it to verify the existence of spherical $t$-designs when $t$ is large.

Sloan and Womersley \cite{sloan2009variational} introduced a variational characterization of the spherical $t$-design via a nonnegative quantity $A_{N,t}(X_N)$ given by
\begin{equation}
\label{def:ANt}
A_{N,t}(X_N):=\frac{4\pi}{N^2}\sum_{\ell=1}^t\sum_{m=-\ell}^{\ell}\left|\sum_{i=1}^N Y_{\ell}^{m}(\bm x_i)\right|^2,
\end{equation}
where $Y_\ell^m$ is the spherical harmonic with degree $\ell$ and order $m$. They gave some important properties for the relation between spherical $t$-designs and $A_{N,t}$. One is that $X_N$ is a spherical $t$-design if and only if $A_{N,t}(X_N)=0$ (cf. Theorem 3 in \cite{sloan2009variational}). Hence, the search of spherical $t$-designs is equivalent to finding the roots of the function $f(\bm x_1,\ldots, \bm x_N):=A_{N,t}(X_N)$, which can be numerically solved via minimizing a nonlinear and nonconvex problem:
\begin{align}
\label{optprobANt}
\min_{X_N\subset\mathbb S^2} A_{N,t}(X_N).
\end{align}
By using the addition theorem, the quantity can be rewritten in terms of the Legendre polynomials and the three-term recurrence can be used to speed up the numerical evaluations of $A_{N,t}$ (as well as its gradient and Hessian). However, since the formulation of $A_{N,t}$ in \cite{sloan2009variational} essentially uses a full matrix of Legendre polynomial evaluations, the computations of numerical spherical $t$-designs are only feasible for small $t$.  Gr{\"a}f and Potts \cite{graf2011computation} rewrote $A_{N,t}$ using fast matrix vector evaluations  based on optimization techniques on manifold and the nonequispaced fast spherical Fourier transforms (NFSFTs). They computed numerical spherical $t$-designs for $t\leq 1000$ with $N \approx\frac{t^2}{2}$.
%It should be pointed out that most optimization techniques methods to find the numerical spherical $t$-designs are based on line-search techniques.

Once spherical $t$-design point sets are obtained, signals on the sphere can be modeled as  samples of functions on such point sets.  Sparsity is the key to exploit the underlying structures of the signals for various applications such as signal denoising. It is well-known that sparsity can be well-exploited using multiresolution analysis techniques, which are widely used in terms of wavelet analysis in Euclidean space $\mathbb R^d,d\geq 1$. Multiscale representation systems including wavelets, framelets, curvelets, shearlets, etc., have been developed for the sparse representations of data (see, e.g., \cite{chui1992introduction,daubechies1992ten,han2017framelets,han2013algorithms,kutyniok2012shearlets,mallat1999wavelet,zhuang2016digital}) over the past four decades, which play an important role in approximation theory, computer graphics,  statistical inference, compressed sensing, numerical solutions of PDEs, and so on.  Wavelets on the sphere were first appeared in \cite{narcowich1995nonstationary,potts1995interpolatory,schroder1995spherical}. Later, Antonio and Vandergheynst in \cite{antoine1998wavelets,antoine1999wavelets} used a group-theoretical approach to construct continuous wavelets on the spheres.  Needlets, as discrete framelets on the unit sphere $\sph^d$, $d\ge2$, were studied in \cite{le2008localized,narcowich2006localized}, which use  polynomial-exact quadrature rules on $\sph^{d}$. Based on hierarchical partitions,  area-regular  spherical Haar tight framelets were constructed in \cite{li2022convolutional}.
Extension of wavelets/framelets on the sphere with more desirable properties, such as   localized property, tight frame property, symmetry,  directionality, etc., were  further studied in \cite{demanet2001directional,iglewska2017frames,wiaux2007complex,mcewen2018localisation} and  many references therein. In \cite{wang2020tight}, based on orthogonal eigen-pairs, localized kernels, filter banks, and affine systems,  Wang and Zhuang provided a general framework for the construction of tight framelets on a compact smooth Riemannian manifolds and considered their discretizations through polynomial-exact quadrature rules. Fast framelet filter bank transforms are developed and their realizations on the 2-sphere are demonstrated.

In this paper, we further exploit the structure of $A_{N,t}$ and employ the trust-region method together with the NFSFTs to find   the numerical spherical $t$-designs for large value of $t$ beyond 1000. Moreover, we focus on the development of spherical tight framelets on $\mathbb S^2$ with fast transform algorithms based on the spherical $t$-designs for practical spherical signal processing. The contributions of this paper lie in the following aspects. First, we investigate in detailed the  structures of $A_{N,t}$, its gradient $ \nabla A_{N,t}$, and its Hessian $\mathcal{H}(A_{N,t})$ in terms of the fast evaluations of spherical harmonic transforms and their adjoints without the needs of referring to their manifold versions as in \cite{graf2011computation}. Moreover, we proposed solving the   minimization problem \cref{optprobANt} using the trust-region method to provide spherical $t$-designs with large values of $t$.
Second, (semi-discrete) spherical tight framelet systems are developed based on the obtained spherical $t$-design point sets. More importantly, a truncated spherical framelet system is introduced for discrete spherical signal representations and its associated fast spherical framelet (filter bank) transforms  are realized for practical signal processing on the sphere. Third, thanks to the high-degree spherical $t$-designs and localization property of our framelets,  we are able to provide signal/image denoising using local thresholding techniques based on a fine-tuned spherical cap \cite{dai2010positive,hesse2012numerical} restrictions. Last but not least, many numerical experiments are conducted to demonstrate the efficiency and effectiveness of our spherical framelets, including Wendland function approximation,  ETOPO data processing, and spherical image denoising.

The paper is organized as follows. We introduce the trust-region method for finding the spherical $t$-designs in \cref{sec:stdmain}  including the fast evaluations  for $A_{N,t}$, its gradient $\nabla A_{N,t}$, and its Hessian $\mathcal{H}(A_{N,t})$. In \cref{sec:spdApp}, we demonstrate the numerical spherical $t$-designs obtained from various initial point sets and use them for Wendland function approximation. In \cref{sec:frmain}, based on the spherical $t$-designs, we provide the construction, characterizations, and algorithmic realizations of the  spherical framelet systems as well as their truncated spherical framelet systems. In \cref{sec:experiments}, numerical experiments to demonstrate the applications of the truncated spherical framelet systems in spherical signal/image denoising are conducted. Finally, conclusion and final remarks are given  in \cref{sec:conclusions}.

%%%%%%%%%%%%%%%%%%%%%%%%%%%%%%%%%%%%%%%%%%%%%%%%%%%%%
%%%%%%%%%%%%%%%%%%%%%%%%%%%%%%%%%%%%%%%%%%%%%%%%%%%%%
%%%%%%%%%%%%%%%%%%%%%%%%%%%%%%%%%%%%%%%%%%%%%%%%%%%%%
\section{Spherical $t$-designs from  trust-region optimization}
\label{sec:stdmain}
In this section, we briefly introduce the trust-region method for solving  a general optimization problem and show how it can be applied to find  spherical $t$-designs with large value of $t$.

\subsection{Trust-region optimization}
\label{sec:TR}

As we mentioned in the introduction, finding $X_N$ to achieve minimum of $A_{N,t}$ in \cref{optprobANt} can be regarded as a general nonlinear and nonconvex optimization problem:
\begin{align}
\label{optprob}
\min_{x\subset X} f(x),
\end{align}
where $f:\R^d\rightarrow \R$ is the objective function to be minimized and $X\subset \R^d$ is a feasible set. There are mainly two global convergence  approaches to solve \cref{optprob}: one is the line search, and another is the trust region. The line search approach uses the quadratic model to generate a search direction and then find a suitable step size  along that direction. Though such a line search method is successful most of the time, it may not exploit the $d$-dimensional quadratic model sufficiently. Unlike the line search approach,
the {\em trust-region method}  obtains a new iterate point by searching in a neighborhood (trust region) of the current iterated point. The trust-region method has many advantages over the line search method such as robustness of algorithms, easier establishment of convergence results,  second-order stationary point convergence, and so on.  The trust-region method has been developed over 70 years, we briefly give a introduction below. For more details, we refer to the book \cite{sun2006optimization}.
% and the  survey paper by Yuan in \cite{yuan2015recent}.

Suppose that the objective function $f$ is at least twice differentiable. The gradient of $f$ at $x=(\xi_1,\ldots,\xi_d)\in\R^d$ is defined as
\begin{align}
\label{gd}
\nabla f(x):=\left[\frac{\partial}{\partial \xi_1}f(x),\ldots,\frac{\partial}{\partial \xi_d}f(x)\right]^\top,
\end{align}
and the Hessian of $f$ is defined as a $d\times d$ symmetric matrix with elements
\begin{align}
\label{hs}
\left[\mathcal{H} f(x)\right]_{ij}
%:=\left[\nabla^2  f\right]_{ij}(x)
:=\frac{\partial^2}{\partial \xi_i\partial \xi_j}f(x),\qquad 1\leq i,j\leq d.
\end{align}
Suppose that $x_k$ is the current iterate point and consider the quadratic model to approximate the original objective function $f(x)$ at $x_k$:
%\begin{align*}
$q^{(k)}(s)=f(x_k)+g_k^\top s+\frac{1}{2}s^\top A_k s$,
%\end{align*}
where $g_k=\nabla f(x_k)$ and $A_k=\mathcal{H}f(x_k)$. Then the optimization problem \cref{optprob} is essentially reduced to solving a sequence of trust-region subproblems:
\begin{align}
\label{subp}
\min_s \quad q^{(k)}(s)=f(x_k)+g_k^\top s+\frac{1}{2}s^\top B_k s \quad \text{ s.t. }\quad \lVert s\rVert\leq\Delta_k,
\end{align}
where $B_k$ could be exactly equal to $A_k$ or  is  a symmetric approximation to $A_k$ ($B_k\approx A_k$). This is equivalent to search a new point $x_{k+1}$ in a region $\Omega_k=\{ x:\lVert x-x_k\rVert\leq \Delta_k \}$ centered at $x_k$ with radius $\Delta_k$. The trust-region algorithm is presented in \cref{alg:TR}, where with the initial $(x_0,\Delta_0)$ and some parameters $\overline{\Delta}, \eta_1,\eta_2, \nu_1,\nu_2$ given, the algorithm iteratively solves $s_k$ in \cref{subp}  approximately (line 2) by the preconditioned conjugate gradient (PCG) algorithm given in \cref{alg:TRCG} and updates $(x_{k+1},\Delta_{k+1})$ from current $(x_k, \Delta_k)$ according to the quantities $\tau_k$ (line 3--4). \cref{alg:TRCG} for solving the subproblem \cref{subp} is proposed by Steihaug \cite{steihaug1983conjugate}  based on a preconditioned and truncated conjugate gradient method. For more details and  how to choose the precondition matrix $W$, we refer to \cite{coleman1996interior}.

% It is crucial that to choose proper $\Delta_k$ in each iteration, as the book \cite{sun2006optimization} discussed, define the actual reduction $Ared_k=f(x_k)-f(x_k+s_k)$ and the predicted reduction $Pred_k=q^{(k)}(0)-q^{(k)}(s_k)$, then have the ratio
%\begin{align*}
%\tau_k=\frac{Ared_k}{Pred_k}=\frac{f(x_{k})-f(x_{k+1})}{q^{(k)}(0)-q^{(k)}(s_k)},
%\end{align*}
%which is used to measure trust-region radius $\Delta_k$. If $\tau_k\approx 1$, then can continue to next iteration for TR subproblem; if $\tau_k\approx 0$ or $\tau_k\leq 0$, then shrink $\Delta_k$. Let $\overline{\Delta}$ be the upper bound of $\bigcup\limits_{n=1}^k\Delta_n$.
\begin{algorithm}[htpb!]
  \caption{Trust-Region Algorithm}
  \label{alg:TR}
  \begin{algorithmic}[1]
    \REQUIRE
      {$x$: initial point;
      $K_{\max}$: maximum iterations;
      $\varepsilon$: termination tolerance;

      Initialize $k=0$, $x_0=x$, $\overline{\Delta}$, $\Delta_0\in(0,\overline{\Delta})$, $0<\eta_1\leq\eta_2<1$, $0<\nu_1<1<\nu_2$.}
    \WHILE{$k\leq K_{\max}$ and $\lVert g_{k}\rVert>\varepsilon$}
      \STATE approximately solve the subproblem \ref{subp} for $s_k$.
      \STATE compute $f(x_k+s_k)$ and $\tau_k=\frac{f(x_{k})-f(x_{k+1})}{q^{(k)}(0)-q^{(k)}(s_k)}$. Set
      \begin{align*}
      x_{k+1}=
      \begin{cases}
      x_k+s_k,&\tau_k\geq\eta_1,\\
      x_k,&\text{otherwise.}
      \end{cases}
      \end{align*}
      \STATE Choose $\Delta_{k+1}$ satisfies
      \begin{align*}
      \Delta_{k+1}\in
      \begin{cases}
           (0,\nu_1\Delta_k], & \tau_k<\eta_1, \\
           [\nu_1\Delta_k,\Delta_k], & \tau_k\in[\eta_1,\eta_2), \\
           [\Delta_k,\min\{\nu_2\Delta_k,\overline  {\Delta}\}], & \tau_k\ge\eta_2 \mbox{ and } \|s_k\| =\Delta_k. \\
      \end{cases}
      \end{align*}
      \STATE Update $g_{k+1}=\nabla f (x_{k+1})$ and $B_{k+1}\approx (\mathcal{H}f)(x_{k+1})$. Set $k=k+1$.
      %\STATE
    \ENDWHILE
    \ENSURE
      {minimizer $x^*$.}
  \end{algorithmic}
\end{algorithm}

\begin{algorithm}[htpb!]
  \caption{PCG Algorithm for Trust-Region Subproblem \ref{subp}}
  \label{alg:TRCG}
  \begin{algorithmic}[1]
    \REQUIRE
      {$x_k$: initial point;
      $K_{\max}$: maximum iterations;
      $\varepsilon_k$: $k$-th termination tolerance;
      $W$: precondition matrix. $\|g\|_W:=\sqrt{g^\top W g}$.

      Initialize $z_0=0$, $g_0=\nabla f(x_k)$, $\gamma_0=-d_0=W^{-1}g_0$, $B_{k}\approx (\mathcal{H}f)(x_{k})$.}
    \IF{$\lVert g_{0}\rVert<\varepsilon_k$}
      \STATE $s_k=z_0$
      \ELSE
      \FOR{$j=0,1,\ldots,K_{\max}$}
      \IF{$d_j^\top B_kd_j\leq 0$}
      \STATE find $\rho>0$ s.t. $\lVert z_j+\rho d_j\rVert_W=\Delta_k$; Set $s_k=z_j+\rho d_j$.
      \STATE break
      \ENDIF
      \STATE Set $\alpha_{j}=\frac{g_j^\top\gamma_j}{d_j^\top B_k d_j}$ and $z_{j+1}=z_j+\alpha_j d_j$.
      \IF{$\lVert z_{j+1}\rVert_W \geq\Delta_k$}
      \STATE find $\rho>0$ s.t. $\lVert z_j+\rho d_j\rVert_W=\Delta_k$; Set $s_k=z_j+\rho d_j$.
      \STATE break
      \ENDIF
      \STATE $g_{j+1}=g_j+\alpha_j B_k d_j$.
      \IF{$\lVert g_{j+1}\rVert_W<\varepsilon_k\lVert g_0\rVert_W$}
      \STATE $s_k=z_{j+1}$.
      \STATE break
      \ENDIF
      \STATE Set $\gamma_{j+1}=W^{-1}g_{j+1}$,  $\beta_j=\frac{g_{j+1}^\top\gamma_{j+1}}{g_j^\top\gamma_j}$, $d_{j+1}=-\gamma_{j+1}+\beta_j d_j$.
      \ENDFOR
    \ENDIF
    \ENSURE
      {solution $s_k^*$.}
  \end{algorithmic}
\end{algorithm}

A well-known result regarding the convergence  of \cref{alg:TR} is given as follow, which shows that the sequence $\{x_k\}_{k=1}^\infty$ converges to a stationary point of $f$.

\begin{thm}[\cite{sun2006optimization}]
\label{thm4}
Suppose that $f:\mathbb R^d\to\mathbb R$ is continuously differentiable on a bounded level set $L=\{x\in\mathbb R^d \setsep f(x)\leq f(x_0)\}$, the approximate Hessian $B_k$ is uniformly bounded in norm, and solution $s_k$ of the trust-region subproblem \ref{subp} is bounded with $\lVert s_k\rVert\leq\tilde{\eta}\Delta_k$, where $\tilde{\eta}>0$ is constant.
Then the sequence $g_k$ of \cref{alg:TR} satisfies
%\begin{align*}
$\lim_{k\to\infty}g_k=0$.
%\end{align*}
\end{thm}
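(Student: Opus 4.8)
The plan is to follow the classical global-convergence argument for trust-region methods (see \cite{sun2006optimization}), specialized to the truncated, preconditioned CG solver of \cref{alg:TRCG}.

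\textbf{Step 1 (sufficient model decrease).} The key quantitative ingredient is a fraction-of-Cauchy-decrease estimate for the step $s_k$ returned by \cref{alg:TRCG}. Since that procedure starts from $z_0=0$ with the preconditioned steepest-descent direction $d_0=-W^{-1}g_k$ and the conjugate-gradient iterates monotonically decrease the quadratic model $q^{(k)}$, the computed $s_k$ does at least as well as the one-dimensional minimizer of $q^{(k)}$ along $d_0$ inside the $W$-ball of radius $\Delta_k$. Carrying out that one-dimensional minimization explicitly, using the equivalence of $\lVert\cdot\rVert_W$ and $\lVert\cdot\rVert$ (the preconditioner being fixed and nonsingular) and the uniform bound $\beta:=\sup_k\lVert B_k\rVert<\infty$, yields a constant $c_1>0$ with
\[
q^{(k)}(0)-q^{(k)}(s_k)\;\ge\;c_1\,\lVert g_k\rVert\,\min\Bigl\{\Delta_k,\ \frac{\lVert g_k\rVert}{1+\beta}\Bigr\}
\]
for all $k$. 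This is the place where the specific structure of \cref{alg:TRCG}, rather than an exact solution of \cref{subp}, is exploited.

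\textbf{Step 2 (summability of the actual decrease).} Because $L$ is bounded and $f$ is continuous, $L$ is compact, so $f$ is bounded below on $L$; moreover every iterate stays in $L$, since an unsuccessful step leaves $x_k$ unchanged and a successful step has $\tau_k\ge\eta_1>0$, hence $f(x_{k+1})\le f(x_k)$ in all cases. Thus $\{f(x_k)\}$ is nonincreasing and bounded below, so $\sum_k\bigl(f(x_k)-f(x_{k+1})\bigr)=f(x_0)-\lim_k f(x_k)<\infty$, and on each successful iteration $f(x_k)-f(x_{k+1})\ge\eta_1\bigl(q^{(k)}(0)-q^{(k)}(s_k)\bigr)$. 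Combined with Step 1 this gives $\sum_{k\ \mathrm{successful}}\lVert g_k\rVert\,\min\{\Delta_k,\,\lVert g_k\rVert/(1+\beta)\}<\infty$.

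\textbf{Step 3 (contradiction).} Suppose $g_k\not\to0$, so $\lVert g_k\rVert\ge\varepsilon$ for some $\varepsilon>0$ and infinitely many $k$. First I would rule out that the radius collapses on such iterations: since $\nabla f$ is uniformly continuous on the compact set $L$, and $\lVert s_k\rVert\le\tilde\eta\Delta_k$ with $\lVert B_k\rVert\le\beta$, a Taylor-type estimate gives $f(x_k+s_k)-q^{(k)}(s_k)=o(\lVert s_k\rVert)=o(\Delta_k)$ uniformly in $x_k\in L$; together with Step 1 this forces $\lvert\tau_k-1\rvert\to0$ along any such sequence with $\Delta_k\to0$, so those iterations are eventually successful and the update rule in \cref{alg:TR} stops shrinking (indeed, enlarges) $\Delta_k$. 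Hence there exist $\bar\delta>0$ and infinitely many successful iterations with $\lVert g_k\rVert\ge\varepsilon$ and $\Delta_k\ge\bar\delta$, each decreasing $f$ by at least $\eta_1 c_1\varepsilon\min\{\bar\delta,\ \varepsilon/(1+\beta)\}>0$ — contradicting the finiteness in Step 2. Therefore $\lim_{k\to\infty}g_k=0$.

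\textbf{Main obstacle.} The delicate part is the bookkeeping in Step 3 linking unsuccessful iterations (on which $x_k$ and $g_k$ are frozen while $\Delta_k$ shrinks) with successful ones, and in particular showing that $\Delta_k$ cannot be driven to $0$ while $\lVert g_k\rVert$ stays bounded away from $0$. Since we assume only continuous differentiability of $f$ on $L$ (no global Lipschitz gradient), every little-$o$ estimate in the $\tau_k\to1$ argument must be justified uniformly via the uniform continuity of $\nabla f$ on the compact level set. A secondary technical point is establishing the fraction-of-Cauchy-decrease inequality of Step 1 for the truncated, preconditioned CG iterate produced by \cref{alg:TRCG} rather than for an exact minimizer of the subproblem \cref{subp}.
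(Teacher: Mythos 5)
The paper itself gives no proof of this theorem --- it is quoted verbatim from \cite{sun2006optimization} --- so your outline has to stand on its own. Steps 1 and 2 are sound and standard: Steihaug's PCG started at $z_0=0$ does at least as well as the (preconditioned) Cauchy point, so the fraction-of-Cauchy-decrease bound holds with a constant depending on the condition number of $W$ and on $\beta=\sup_k\lVert B_k\rVert$; monotonicity of $\{f(x_k)\}$, compactness of $L$, and $\tau_k\ge\eta_1$ on accepted steps give the summability of the predicted decreases. (A minor technical point: the trial points $x_k+s_k$ need not lie in $L$, so the uniform Taylor estimate requires $\nabla f$ to be uniformly continuous on a bounded neighborhood of $L$, not just on $L$; this is the usual reading of the hypothesis.)

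The genuine gap is in Step 3. Your argument --- ``the radius cannot collapse on iterations with $\lVert g_k\rVert\ge\varepsilon$, hence there are infinitely many successful iterations with $\lVert g_k\rVert\ge\varepsilon$ and $\Delta_k\ge\bar\delta$'' --- is only valid when $\lVert g_k\rVert\ge\varepsilon$ for \emph{all} sufficiently large $k$; i.e., it proves $\liminf_k\lVert g_k\rVert=0$, not the claimed $\lim_k g_k=0$. If $\lVert g_k\rVert\ge\varepsilon$ only along a subsequence, the radius can be driven arbitrarily small during the intervening iterations where the gradient is small (rejected steps shrink $\Delta_k$ there, since rejection is governed by the size of $\Delta_k$ relative to the \emph{current} gradient), so you may re-enter the set $\{k:\lVert g_k\rVert\ge\varepsilon\}$ with $\Delta_k$ as small as you like. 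Those iterations are indeed accepted and do not further shrink $\Delta_k$, but the guaranteed decrease $\eta_1c_1\varepsilon\Delta_k$ is then also tiny, so no contradiction with Step 2 follows; there is no single $\bar\delta>0$ as asserted. The missing ingredient is the classical displacement (telescoping) argument used for the ``$\lim$'' version when $\eta_1>0$: pick $m_i$ with $\lVert g_{m_i}\rVert\ge\varepsilon$ and let $l_i>m_i$ be the first index with $\lVert g_{l_i}\rVert<\varepsilon/2$ (if no such index exists, your radius argument applies verbatim). Uniform continuity of $\nabla f$ on the compact level set forces $\lVert x_{m_i}-x_{l_i}\rVert\ge d>0$ for some fixed $d$, while for $m_i\le k<l_i$ every accepted step satisfies $\lVert s_k\rVert\le\tilde\eta\Delta_k$ and yields a decrease at least $\eta_1c_1(\varepsilon/2)\min\{\Delta_k,\ \varepsilon/(2(1+\beta))\}$; hence each such stretch costs a fixed positive amount of $f$-decrease, and infinitely many disjoint stretches contradict the convergence of $\{f(x_k)\}$. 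Without this step (or an equivalent), your proposal establishes only the weaker liminf statement, not the theorem as stated.
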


Regarding the computational time complexity of the trust-region method, the total cost of \cref{alg:TR} includes the cost from the total outer iteration steps $k_{wh}$ (the while-loop in line 1) in \cref{alg:TR}, where each outer iteration has the cost from the inner iteration steps $k_{for,i}$ (the for-loop in line 4) in \cref{alg:TRCG}. The total number of iterations is $K_{TR} =\sum_{i=1}^{k_{wh}} k_{for,i}$. In each iteration of either inner or outer, the main cost comes from the evaluations of $f$, the gradient $g=\nabla f$, and the Hessian $\mathcal{H} f$ (or its approximation). Denote  $C_f, C_g, C_{\mathcal{H}}$ their computational time complexity, respectively. Then, the total computational time complexity of \cref{alg:TR} is of order $\mathcal O(K_{TR}\cdot(C_f+C_g+C_{\mathcal{H}}))$.
%Note that $\mathcal{H} f$ does not need to be computed directly but implicitly by matrix vector implementation.
We proceed next to discuss the minimization problem \cref{optprob} under the setting of spherical $t$-design, i.e., $f=A_{N,t}$, and its related evaluations and complexity.

%
%For subproblem \ref{subp} and \cref{alg:TRCG}, the book \cite{sun2006optimization} also gave detailed analysis.
%\begin{thm}[\cite{sun2006optimization}]
%\label{thm5}
%The vector $s_k^*$ is the solution of the subproblem \ref{subp} if and only if there is a scalar $\lambda^*\geq 0$ such that
%\begin{align}
%&(B_k+\lambda^* I)s_k^*=-g_k,\\
%&\lVert s_k^*\rVert\leq\Delta_k,\\
%&\lambda^*(\Delta_k-\lVert s_k^*\rVert)=0,
%\end{align}
%and $(B_k+\lambda^* I)$ is positive semidefinite.
%\end{thm}
%
%\begin{thm}[\cite{sun2006optimization}]
%\label{thm6}
%Let $\lVert s_j\rVert$ be the iterates generated by \cref{alg:TRCG}. Then $q(s_j)$ in subproblem \ref{subp} is strictly decreasing, i.e.,
%\begin{align*}
%q(s_{j+1})<q(s_{j}).
%\end{align*}
%Further, $\lVert s_k\rVert_W$ is also strictly increasing, i.e.,
%\begin{align*}
%0=\lVert s_0\rVert_W<\cdots<\lVert s_j\rVert_W<\lVert s_{j+1}\rVert_W<\cdots<\lVert s_k^*\rVert_W\leq\Delta_k.
%\end{align*}
%\end{thm}
%

\subsection{Fast evaluations of $A_{N,t}$, $\nabla A_{N,t}$, and $\mathcal{H} (A_{N,t})$}
\label{subsec:fastA}
%By solving the minimization problem \cref{optprobANt} via \cref{alg:TR} with $f=A_{N,t}(\cdot)$, we can find a numerical spherical $t$-design point set $x^*=X_N$. The main cost of the algorithm in each step is the evaluations of the $A_{N,t}$, $\nabla A_{N,t}$, and $\mathcal{H}(A_{N,t})$.
In what follows, we give  details on  the evaluations of  $A_{N,t}$, $\nabla A_{N,t}$, and $\mathcal{H}(A_{N,t})$.

For $\bm x\in \sph^2$, in terms of the spherical coordinate $(\theta,\phi)\in [0,\pi]\times[0,2\pi)$, we can represent it as $\bm x= \bm x(\theta,\phi) =  (\sin\theta\cos\phi,\sin\theta\sin\phi,\cos\theta)\in  \sph^2$. For each $\ell\in\N_0:=\N\cup \{0\}$ and $m=-\ell,\ldots,\ell$, the  spherical harmonic $Y_\ell^m$ can be expressed as
\begin{align}
\label{def:Ylm}
Y_\ell^m(\bm x)=Y_\ell^m(\theta,\phi):=\sqrt{\frac{(2\ell+1)}{4\pi}\frac{(\ell-m)!}{(\ell+m)!}}P_\ell^m(\cos\theta)\mathrm e^{\mathrm im\phi},
\end{align}
where $P_\ell^m:[-1,1]\to\mathbb R$ is the associated Legendre polynomial given by $P_\ell^m(z)=(-1)^m(1-z^2)^{\frac{m}{2}}\frac{\mathrm d^m}{\mathrm d z^m}P_\ell(z)$ for $\ell\in\mathbb N_0$ and $m=0,\ldots,\ell$ with $P_\ell:[-1,1]\to\mathbb R$ being  the Legendre polynomial given by $P_\ell(z)=\frac{1}{2^\ell \ell!}\frac{\mathrm d^\ell}{\mathrm d z^\ell}[(z^2-1)^\ell]$ for $\ell\in\mathbb N_0$. We use the convention $P_\ell^{-m}:=(-1)^m\frac{(\ell-m)!}{(\ell+m)!}P_\ell^m$ to define $Y_\ell^m$ with  negative $m$.
Note that $Y_0^0=\frac{1}{\sqrt{4\pi}}$. Then, we have $\Pi_t=\spn\{Y_\ell^m \setsep (\ell,m)\in \mathcal{I}_t\}$ with the index set
\begin{align}
\label{def:indexSet}
\mathcal I_t:=\{(\ell,m)\setsep \ell=0,\ldots,t; m=-\ell,\ldots,\ell\}.
\end{align}
Moreover, $\{Y_\ell^m\setsep \ell\in\N_0, |m|\le \ell\}$ forms an orthonormal basis for the Hilbert space $L^2(\sph^2):=\{f:\sph^2\rightarrow \C \setsep \int_{\sph^2} |f(\bm x)|^2\mu_2(\bm x)<\infty\}$ of square-integrable functions on $\sph^2$. That is, $\langle Y_\ell^m, Y_{\ell'}^{m'}\rangle= \delta_{mm'}\delta_{\ell\ell'}$,
%\begin{align*}
%&\langle Y_\ell^m(\theta,\phi),Y_{\ell'}^{m'}(\theta,\phi)\rangle_{L^2(\mathbb{S}^2)}=\int_{0}^{2\pi}\int_{0}^{\pi}Y_\ell^m(\theta,\phi)\overline{Y_{\ell'}^{m'}(\theta,\phi)}\sin\theta\mathrm{d}\theta\mathrm{d}\phi %\\
%&=\int_{0}^{2\pi}\int_{-1}^{1}Y_\ell^m(\theta,\phi)\overline{Y_{\ell'}^{m'}(\theta,\phi)}\mathrm{d}(\cos\theta)\mathrm{d}\phi
%=\delta_{mm'}\delta_{\ell\ell'},
%\end{align*}
where the inner product is defined as $\langle f_1,f_2\rangle:=\int_{\sph^2} f_1(\bm x)\overline{f_2(\bm x)} d\mu_2(\bm x)$ for $f_1,f_2\in L^2(\sph^2)$ and $\delta_{ij}$ is the Kronecker delta. Consequently, any function $f\in L^2(\sph^2)$ has the $L^2$-representation
$f = \sum_{\ell=0}^\infty \sum_{m=-\ell}^\ell \hat f_\ell^m Y_\ell^m$, where $\hat f_\ell^m: =\langle f,Y_\ell^m\rangle$ is its spherical harmonic (Fourier) coefficient with respect to $Y_\ell^m$.

In terms of $(\theta, \phi)$, $A_{N,t}(X_N)$ can be regarded as a  function of $2N$ variables. In fact, we can identify the point set $X_N=\{\bm x_1,\ldots, \bm x_N\}\subset \sph^2$ as
\begin{align}
X_N:=(\bm\theta,\bm\phi):=(\theta_1,\ldots,\theta_N,\phi_1,\ldots,\phi_N)
\end{align}
with $\bm\theta = (\theta_1,\ldots,\theta_N)$, $\bm \phi=(\phi_1,\ldots,\phi_N)$, and $\bm x_i:=\bm x_i(\theta_i,\phi_i)$ being the $i$-th point determined by its spherical coordinate with $(\theta_i,\phi_i)\in[0,\pi]\times [0,2\pi)$. In what follows, we identify $\bm x_i = (\theta_i,\phi_i)$ if no ambiguity appears. Denote $[N]:=\{1,\ldots,N\}$ to be the index set of size $N$.  Then, the variational characterization $A_{N,t}(X_N)$ in \cref{def:ANt} can be written as a smooth function of $2N$ variables:
\begin{align}
\label{ANt:2Nvariables}
A_{N,t}(X_N)&=A_{N,t}(\bm\theta,\bm\phi)%=A_{N,t}(\theta_1,\ldots,\theta_N,\phi_1,\ldots,\phi_N)
%\\&
=\frac{4\pi}{N^2}\sum_{(\ell,m)\in\mathcal{I}_t}\left|\sum_{i\in[N]} Y_{\ell}^{m}(\theta_i,\phi_i)\right|^2-1.
\end{align}

For any point set $X_N$ and degree $t$, we have $\dim\Pi_t = (t+1)^2$ and  the matrix $\bm Y_t :=\bm Y_t(X_N):=(Y_{\ell}^m(\theta_i,\phi_i))_{i\in[N],(\ell,m)\in\mathcal{I}_t}$ is of size $N\times (t+1)^2$:
\begin{align}
\label{def:Yt}
\bm Y_t=
\begin{bmatrix}
Y_0^0(\bm x_1)&Y_1^{-1}(\bm x_1)&Y_1^0(\bm x_1)&\cdots&Y_t^t(\bm x_1)\\
Y_0^0(\bm x_2)&Y_1^{-1}(\bm x_2)&Y_1^0(\bm x_2)&\cdots&Y_t^t(\bm x_2)\\
\vdots&\vdots&\vdots&\ddots&\vdots\\
Y_0^{0}(\bm x_N)&Y_1^{-1}(\bm x_N)&Y_1^{0}(\bm x_N)&\cdots&Y_t^{t}(\bm x_N)
\end{bmatrix}.
\end{align}
Its transpose of  complex conjugate is $\bm Y^\star_t:=\overline{\bm Y_t(X_N)}^\top\in \C^{(t+1)^2\times N}$. Let $\bm e:=[1,\ldots,1]^\top$ be a vector of size $N$.  We use $\Re (\cdot)$ to denote the (entry-wise) operation of taking the real part of a complex object (scalar, vector, or matrix).
%Define $\partial_{\bm\theta}:=[\frac{\partial}{\partial \theta_1},\ldots,\frac{\partial}{\partial \theta_N}]^\top$ $\partial_{\bm\phi}:=[\frac{\partial}{\partial \phi_1},\ldots,\frac{\partial}{\partial \phi_N}]^\top$.

We have the following theorem that summarizes the evaluations of $A_{N,t}$ and $\nabla A_{N,t}$  in a concise matrix-vector form in terms of  $\bm Y_t$ and  $\bm Y_t^\star$.

\begin{theorem}
\label{thm:AntGradientMatrixForm}
Fix $t\in\N_0$. Let $A_{N,t}$ be defined as in \eqref{ANt:2Nvariables} and define
\begin{align*}
\label{def:cab}
\hat { c}_\ell^m=\sum_{i=1}^N\overline{Y_\ell^m(\bm x_i)},\,
a_\ell^m=\sqrt{\frac{\ell^2[(\ell+1)^2-m^2]}{(2\ell+1)(2\ell+3)}},\,
b_\ell^m=\sqrt{\frac{(\ell+1)^2(\ell^2-m^2)}{(2\ell-1)(2\ell+1)}}
\end{align*}
for $(\ell,m)\in\mathcal{I}_{t+2}$ with the convention $a_{-1}^m=b_{t+1}^m=b_{t+2}^m=0$.
Define vectors $\bm {\hat c}_0\in \C^{(t+2)^2}, \bm {\hat d}_0\in \C^{(t+1)^2}$ and a diagonal matrix $\bm D_{\bm \theta}$ as follows:
\begin{align*}
%\label{ANtYe}
\bm {\hat c}_0 &:= \frac{8\pi}{N^2}\cdot (\hat { c}_{\ell-1}^m a_{\ell-1}^m-\hat { c}_{\ell+1}^m b_{\ell+1}^m)_{(\ell,m)\in\mathcal{I}_{t+1}},\\
%\label{DANtY}
\bm {\hat d}_0 &:=\frac{8\pi}{N^2}(\mathrm{i}m \hat c_\ell^m)_{(\ell,m)\in\mathcal{I}_t},\quad
\bm D_{\bm \theta}:=\diag(\frac{1}{\sin \theta_1},\ldots,\frac{1}{\sin \theta_N}).
\end{align*}
Then, the $A_{N,t}$ and its gradient $\nabla A_{N,t}$ in matrix-vector forms are given by
\begin{align}
\label{eq1:ANt}
A_{N,t}(X_N)&=\frac{4\pi}{N^2}\lVert\bm Y^\star_t\bm e\rVert^2- 1,\\
\label{grad}
\nabla A_{N,t}(X_N)&
%=
%\begin{bmatrix}
%\partial_{\bm \theta} A_{N,t}(X_N)\\
%\partial_{\bm \phi} A_{N,t}(X_N)
%\end{bmatrix}
=\Re
\begin{bmatrix}
\bm D_{\bm \theta}\bm Y_{t+1} \bm {\hat c}_0\\
\bm Y_{t} \bm {\hat d}_0
\end{bmatrix}.
\end{align}

\end{theorem}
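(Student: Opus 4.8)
The plan is to prove the scalar identity \eqref{eq1:ANt} by a direct bookkeeping computation and then differentiate to obtain \eqref{grad}, the one nontrivial ingredient being a three-term recurrence for $\partial_\theta Y_\ell^m$. For \eqref{eq1:ANt}, note that the $(\ell,m)$-entry of $\bm Y_t^\star\bm e\in\C^{(t+1)^2}$ is $\sum_{i=1}^N\overline{Y_\ell^m(\bm x_i)}=\hat c_\ell^m$, so $\lVert\bm Y_t^\star\bm e\rVert^2=\sum_{(\ell,m)\in\mathcal I_t}|\hat c_\ell^m|^2=\sum_{(\ell,m)\in\mathcal I_t}\bigl|\sum_{i\in[N]}Y_\ell^m(\bm x_i)\bigr|^2$; comparing with \eqref{ANt:2Nvariables} gives the claim, and the $\ell=0$ term $|NY_0^0|^2=N^2/(4\pi)$ is exactly what the $-1$ absorbs, reconciling \eqref{ANt:2Nvariables} with \eqref{def:ANt}.

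For the gradient, set $z_{\ell,m}:=\sum_{i\in[N]}Y_\ell^m(\theta_i,\phi_i)$, so that $A_{N,t}=\frac{4\pi}{N^2}\sum_{(\ell,m)\in\mathcal I_t}z_{\ell,m}\overline{z_{\ell,m}}-1$ with $\overline{z_{\ell,m}}=\hat c_\ell^m$. Since $z_{\ell,m}$ depends on $(\theta_j,\phi_j)$ only through its $j$-th summand, differentiating $|z_{\ell,m}|^2$ yields
\[
\partial_{\theta_j}A_{N,t}=\frac{8\pi}{N^2}\Re\sum_{(\ell,m)\in\mathcal I_t}\hat c_\ell^m\,\partial_{\theta_j}Y_\ell^m(\bm x_j),\qquad
\partial_{\phi_j}A_{N,t}=\frac{8\pi}{N^2}\Re\sum_{(\ell,m)\in\mathcal I_t}\hat c_\ell^m\,\partial_{\phi_j}Y_\ell^m(\bm x_j)
\]
for every $j\in[N]$. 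The azimuthal part is immediate from \eqref{def:Ylm}, where $\partial_\phi Y_\ell^m=\mathrm i m Y_\ell^m$; hence $\partial_{\phi_j}A_{N,t}=\Re\sum_{(\ell,m)\in\mathcal I_t}Y_\ell^m(\bm x_j)\cdot\frac{8\pi}{N^2}\mathrm i m\hat c_\ell^m=\Re\,(\bm Y_t\bm{\hat d}_0)_j$, which stacked over $j$ is the lower block of \eqref{grad}.

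The substantive step is the latitudinal derivative: I would establish $\sin\theta\,\partial_\theta Y_\ell^m=a_\ell^m Y_{\ell+1}^m-b_\ell^m Y_{\ell-1}^m$ with $a_\ell^m,b_\ell^m$ as in the statement. Starting from the classical identity for $(1-z^2)\frac{d}{dz}P_\ell^m(z)$ in terms of $P_{\ell\pm1}^m(z)$, substituting $z=\cos\theta$ so that $\sin\theta\frac{d}{d\theta}=-(1-z^2)\frac{d}{dz}$, and carrying the normalizations $\sqrt{\frac{2\ell+1}{4\pi}\frac{(\ell-m)!}{(\ell+m)!}}$ through, one obtains exactly $a_\ell^m=\ell\,\alpha_{\ell+1}^m$ and $b_\ell^m=(\ell+1)\,\alpha_\ell^m$, where $\alpha_\ell^m=\sqrt{(\ell^2-m^2)/((2\ell-1)(2\ell+1))}$ is the coefficient in $\cos\theta\,Y_\ell^m=\alpha_\ell^m Y_{\ell-1}^m+\alpha_{\ell+1}^m Y_{\ell+1}^m$ (checking $\ell=0,1$ along the way). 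Substituting into the formula for $\partial_{\theta_j}A_{N,t}$ and shifting the summation index ($\ell\mapsto\ell-1$ in the $a$-term, $\ell\mapsto\ell+1$ in the $b$-term) merges the two sums into one over $(\ell,m)\in\mathcal I_{t+1}$ with coefficient $\frac{8\pi}{N^2}(a_{\ell-1}^m\hat c_{\ell-1}^m-b_{\ell+1}^m\hat c_{\ell+1}^m)=(\bm{\hat c}_0)_{\ell,m}$; the boundary/out-of-range contributions vanish under the conventions $a_{-1}^m=b_{t+1}^m=b_{t+2}^m=0$ (and in fact automatically, since $a_{\ell-1}^m=0$ when $|m|=\ell$ and $b_0^0=0$). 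This gives $\partial_{\theta_j}A_{N,t}=\frac{1}{\sin\theta_j}\Re\,(\bm Y_{t+1}\bm{\hat c}_0)_j=\Re\,(\bm D_{\bm\theta}\bm Y_{t+1}\bm{\hat c}_0)_j$, i.e.\ the upper block of \eqref{grad}.

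The main obstacles I expect are (i) pinning down the recurrence for $\sin\theta\,\partial_\theta Y_\ell^m$ with precisely the normalized coefficients $a_\ell^m,b_\ell^m$ — a routine but error-prone manipulation of associated-Legendre identities — and (ii) the index bookkeeping in the merging step, namely verifying that every term generated outside $\mathcal I_{t+1}$ or outside the admissible range $|m|\le\ell$ is indeed annihilated by the stated conventions, so that $\bm{\hat c}_0\in\C^{(t+2)^2}$ is well defined exactly as written.
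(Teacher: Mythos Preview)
Your proposal is correct and follows essentially the same approach as the paper: differentiate $A_{N,t}$ termwise to reduce to $\partial_{\xi_j}Y_\ell^m$, handle $\partial_\phi$ via $\mathrm{i}mY_\ell^m$, and handle $\partial_\theta$ via the recurrence $\sin\theta\,\partial_\theta Y_\ell^m=a_\ell^m Y_{\ell+1}^m-b_\ell^m Y_{\ell-1}^m$ followed by an index shift. The only difference is that the paper simply cites this recurrence from \cite{varshalovich1988quantum}, whereas you propose to derive it from the associated-Legendre identities; your argument is thus slightly more self-contained but otherwise identical.
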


\begin{proof}
The matrix-vector form of $A_{N,t}$ in \eqref{eq1:ANt} directly follows from \cref{ANt:2Nvariables,def:Yt}. To rewrite the gradient of $A_{N,t}$ in the matrix-vector form, we have
\begin{align}
\label{eq:gradientOp}
\frac{\partial}{\partial{\xi_j}}A_{N,t}(X_N)
&=\frac{8\pi}{N^2}\Re{\left[\sum_{(\ell,m)\in\mathcal{I}_t}\left(\sum_{i\in[N]} \overline{Y_\ell^m(\bm x_i)}\right)\frac{\partial}{\partial\xi_j}Y_\ell^m(\bm x_j)\right]}
\\\notag&
=\frac{8\pi}{N^2}\Re{\left[\sum_{(\ell,m)\in\mathcal{I}_t}\hat c_\ell^m\frac{\partial}{\partial\xi_j}Y_\ell^m(\bm x_j)\right]},
\end{align}
where $\xi_j\in\{\theta_j,\phi_j\}$.
Based on the following formulae  (\cite{varshalovich1988quantum}),
\begin{align}
%\label{eq2}
%\overline{Y_\ell^m(\theta,\phi)}&=Y_\ell^m(\theta,-\phi)=(-1)^m Y_\ell^{-m}(\theta,\phi),\\
\label{partialYlm}
\frac{\partial}{\partial\theta}Y_\ell^m=\frac{1}{\sin\theta}\left[a_\ell^mY_{\ell+1}^m-b_\ell^mY_{\ell-1}^m\right],\quad
%\label{eq4}
\frac{\partial}{\partial\phi}Y_\ell^m=\mathrm im Y_\ell^m,
%\label{eq5}
%\frac{\partial^2}{\partial\theta^2}Y_\ell^m(\theta,\phi)&=-\left[\ell(\ell+1)-\frac{m^2}{\sin^2\theta}\right]Y_\ell^m-\cot\theta\frac{\partial}{\partial\theta}Y_\ell^m,\\
%\label{eq6}
%\frac{\partial^2}{\partial\phi^2}Y_\ell^m(\theta,\phi)&=-m^2 Y_\ell^m,\\
%\label{eq7}
%\frac{\partial^2}{\partial\theta\partial\phi}Y_\ell^m(\theta,\phi)&=\frac{\partial^2}{\partial\phi\partial\theta}Y_\ell^m(\theta,\phi)=\mathrm im\frac{\partial}{\partial\theta}Y_\ell^m.
\end{align}
%where $a_\ell^m=\ell\sqrt{\frac{(\ell+1)^2-m^2}{(2\ell+1)(2\ell+3)}}$ and $b_\ell^m=(\ell+1)\sqrt{\frac{\ell^2-m^2}{(2\ell-1)(2\ell+1)}}$,
we can thus deduce that
%\begin{align}
%\label{eq8}
%\nabla_l A_{N,t}(X_N)&=\frac{8\pi}{N^2}\sum_{\ell=0}^t\sum_{m=-\ell}^{\ell}\Re{\left[\left(\sum_{i=1}^N \overline{Y_\ell^m(\bm x_i)}
%\right)\nabla Y_\ell^m(\bm x_l)\right]}\\
%&=\frac{8\pi}{N^2}\Re{\left[\sum_{\ell=0}^t\sum_{m=-\ell}^{\ell}\left(\sum_{i=1}^N \overline{Y_\ell^m(\bm x_i)}
%\right)\nabla Y_\ell^m(\bm x_l)\right]}.\nonumber
%\end{align}
%Then the $\theta$-part of $\nabla_l A_{N,t}(X_N)$ is
\begin{align}
\label{eq9}
\frac{\partial}{\partial{\theta_i}}A_{N,t}(X_N)
%&=\frac{8\pi}{N^2}\Re{\left[\sum_{\ell=0}^t\sum_{m=-\ell}^{\ell}\left(\sum_{i=1}^N \overline{Y_\ell^m(\bm x_i)}\right)\frac{\partial}{\partial\theta_l}Y_\ell^m(\bm x_l)\right]}\\
%\notag &=\frac{8\pi}{N^2}\Re{\left[\sum_{\ell=0}^t\sum_{m=-\ell}^{\ell}\hat c_\ell^m\cdot\frac{1}{\sin\theta_l}\left(
%a_\ell^m Y_{\ell+1}^m-b_\ell^m Y_{\ell-1}^m
%\right)\right]}\\
%\notag
&=\frac{8\pi}{N^2}\cdot\frac{1}{\sin\theta_i}\Re{\left[
\sum_{(\ell,m)\in\mathcal{I}_{t+1}}\left(\hat { c}_{\ell-1}^m a_{\ell-1}^m-\hat { c}_{\ell+1}^m b_{\ell+1}^m\right)Y_\ell^m(\bm x_i)
\right]},\\
\label{eq9.2}
\frac{\partial}{\partial{\phi_i}}A_{N,t}(X_N)
%&=\frac{8\pi}{N^2}\Re{\left[\sum_{\ell=0}^t\sum_{m=-\ell}^{\ell}\left(\sum_{i=1}^N \overline{Y_\ell^m(\bm x_i)}\right)\frac{\partial}{\partial\phi_l}Y_\ell^m(\bm x_l)\right]}\\\notag
%&=\frac{8\pi}{N^2}\Re{\left[\sum_{\ell=0}^t\sum_{m=-\ell}^{\ell}\mathrm i m\cdot\hat c_\ell^m Y_\ell^m(\bm x_l)\right]}
&=\frac{8\pi}{N^2}\Re{\left[\sum_{(\ell,m)\in\mathcal{I}_t}
 (\mathrm{i}m \hat c_\ell^m) Y_\ell^m(\bm x_i)
\right]},
\end{align}
which imply the expressions of $\nabla A_{N,t}$ in \eqref{grad}. We are done.
\end{proof}

 The following theorem gives the evaluation of the Hessian in matrix-vector form.

\begin{thm}
\label{thm:HessianMatrixForm}
Retain notation in \cref{thm:AntGradientMatrixForm} and further define
\[
\begin{aligned}
\bm {\hat d}_1:=&\frac{8\pi}{N^2}\cdot( a_{\ell-1}^m-b_{\ell+1}^m )_{(\ell,m)\in\mathcal{I}_{t+1}},&
\bm {\hat d}_2:=&\frac{8\pi}{N^2}\cdot(\mathrm i m\cdot1^{\ell-\ell})_{(\ell,m)\in\mathcal{I}_{t+1}},\\
\bm {\hat c}_1:=&\frac{8\pi}{N^2}\cdot(\hat { c}_\ell^m \cdot (-m^2))_{(\ell,m)\in\mathcal{I}_{t}},&
\bm {\hat c}_2:=&\frac{8\pi}{N^2}\cdot(\hat { c}_\ell^m \cdot \ell(\ell+1))_{(\ell,m)\in\mathcal{I}_{t}},\\
\bm {\hat c}_3:=&\frac{8\pi}{N^2}\cdot(\mathrm i m(\hat { c}_{\ell-1}^m a_{\ell-1}^m-\hat { c}_{\ell+1}^m b_{\ell+1}^m))_{(\ell,m)\in\mathcal{I}_{t+1}}, &\bm C_{\bm\theta}:=&\diag(\cot\theta_1,\ldots,\cot\theta_N).
\end{aligned}
\]
Then, the Hessian $\mathcal{H}(A_{N,t})$ can be written as
\begin{align}
\label{hs1}
\mathcal{H}(A_{N,t}) =
%%\frac{8\pi}{N^2}\sum_{\ell=0}^t\sum_{m=-\ell}^\ell
%\Re
%\begin{bmatrix}
%\bm H_{\bm\theta\bm\theta} & \bm H_{\bm\theta\bm\phi}\\
%\bm H_{\bm\phi\bm\theta} & \bm H_{\bm\phi\bm\phi}\\
%\end{bmatrix}
%=
\Re\left(\begin{bmatrix}
\bm F_{\bm\theta\bm\theta} & \bm F_{\bm\theta\bm\phi}\\
\bm F_{\bm\phi\bm\theta} & \bm F_{\bm\phi\bm\phi}\\
\end{bmatrix}
+
\begin{bmatrix}
 \overline{\bm E_{\bm\theta}}\\
\overline{\bm E_{\bm\phi}}
\end{bmatrix}
\begin{bmatrix}
{\bm E_{\bm\theta}^\top} & {\bm E_{\bm\phi}^\top}
\end{bmatrix}\right),
\end{align}
where
\begin{align}
\label{EthetaPhi}
\bm E_{\bm\theta}&=\bm D_\theta\bm Y_{t+1}\bm{\hat d}_1\mbox{ and }  \bm E_{\bm\phi}=\bm Y_{t}\bm{\hat d}_2,\\
\label{Fthetaphi1}
\bm F_{\bm\theta\bm\theta}
&= \diag(\bm D_{\bm\theta}^2\bm Y_t \bm {\hat c}_1-\bm Y_t \bm{\hat c}_2-\bm C_{\bm \theta} \bm D_{\theta} \bm Y_{t+1}\bm{\hat c}_0),\\
\label{Fthetaphi2}
\bm F_{\bm\theta\bm\phi}
&=\bm F_{\bm\phi\bm\theta}= \diag(\bm D_{\bm\theta}\bm Y_{t+1} \bm {\hat c}_3),\mbox{ and }
%\label{Fthetaphi3}
\bm F_{\bm\phi\bm\phi}
= \diag(\bm Y_t \bm {\hat c}_1).
\end{align}

%\[
%\bm D_{\bm\xi\bm\zeta} =  \diag\left(\frac{8\pi}{N^2}\sum_{\ell=0}^t\sum_{m=-\ell}^\ell\hat {\tilde c}_\ell^m\cdot\frac{\partial^2}{\partial\xi_l\partial\zeta_l}Y_\ell^m(\bm x_l)\right)_{l=1}^N
%\]
%and
%\[
%\bm E_{\bm\xi} =  \left(\frac{8\pi}{N^2}\sum_{\ell=0}^t\sum_{m=-\ell}^\ell\frac{\partial}{\partial\xi_l}Y_\ell^m(\bm x_l)\right)_{l=1}^N\in\C^{N\times 1}.
%\]
\end{thm}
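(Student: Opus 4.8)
The plan is to differentiate once more the gradient already computed in \cref{thm:AntGradientMatrixForm} and to sort the outcome according to whether the second derivative falls on the coefficient $\hat c_\ell^m$ or on $Y_\ell^m(\bm x_j)$ itself. Starting from \cref{eq:gradientOp}, for $\xi_j\in\{\theta_j,\phi_j\}$ and $\xi_k\in\{\theta_k,\phi_k\}$, and using that $\partial\hat c_\ell^m/\partial\xi_k=\overline{\partial Y_\ell^m(\bm x_k)/\partial\xi_k}$ while $\partial^2 Y_\ell^m(\bm x_j)/\partial\xi_k\partial\xi_j$ vanishes unless $j=k$, one obtains
\begin{align*}
\frac{\partial^2 A_{N,t}}{\partial\xi_k\partial\xi_j}
=\frac{8\pi}{N^2}\Re\left[\sum_{(\ell,m)\in\mathcal{I}_t}\overline{\frac{\partial Y_\ell^m(\bm x_k)}{\partial\xi_k}}\,\frac{\partial Y_\ell^m(\bm x_j)}{\partial\xi_j}
+\delta_{jk}\sum_{(\ell,m)\in\mathcal{I}_t}\hat c_\ell^m\,\frac{\partial^2 Y_\ell^m(\bm x_j)}{\partial\xi_k\,\partial\xi_j}\right].
\end{align*}
Running over all index pairs and all choices of $\xi$, the first sum assembles into the outer-product term $\Re(\overline{\bm E}\bm E^\top)$ of \cref{hs1}, where $\bm E$ is the stacked matrix with blocks $\bm E_{\bm\theta}$, $\bm E_{\bm\phi}$; the second sum, supported on $j=k$, yields the block-diagonal term with blocks $\bm F_{\bm\theta\bm\theta}$, $\bm F_{\bm\theta\bm\phi}=\bm F_{\bm\phi\bm\theta}$, $\bm F_{\bm\phi\bm\phi}$.

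For the outer-product part I would use the derivative formulae \cref{partialYlm}. From $\partial_\phi Y_\ell^m=\mathrm{i}m Y_\ell^m$ the $\bm\phi$-block is a column scaling of $\bm Y_t$, i.e.\ $\bm E_{\bm\phi}=\bm Y_{t}\bm{\hat d}_2$; from $\partial_\theta Y_\ell^m=\tfrac{1}{\sin\theta}(a_\ell^mY_{\ell+1}^m-b_\ell^mY_{\ell-1}^m)$, after the shifts $\ell\mapsto\ell\pm1$ together with the conventions $a_{-1}^m=b_{t+1}^m=b_{t+2}^m=0$ (exactly the reindexing already used to pass from \cref{eq:gradientOp} to \cref{eq9} and \cref{eq9.2}, now with the constant $1$ in place of $\hat c_\ell^m$), the $\bm\theta$-block becomes $\bm E_{\bm\theta}=\bm D_{\bm\theta}\bm Y_{t+1}\bm{\hat d}_1$. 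Forming $\begin{bmatrix}\overline{\bm E_{\bm\theta}}\\\overline{\bm E_{\bm\phi}}\end{bmatrix}\begin{bmatrix}\bm E_{\bm\theta}^\top&\bm E_{\bm\phi}^\top\end{bmatrix}$ and taking the real part reproduces that term; symmetry of the Hessian is automatic since $\Re z=\Re\bar z$.

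For the diagonal part I would compute the three second-order derivatives of $Y_\ell^m$ at a single point. Two are immediate: $\partial_\phi^2 Y_\ell^m=-m^2Y_\ell^m$ gives $\bm F_{\bm\phi\bm\phi}=\diag(\bm Y_t\bm{\hat c}_1)$, and $\partial_\theta\partial_\phi Y_\ell^m=\mathrm{i}m\,\partial_\theta Y_\ell^m=\tfrac{\mathrm{i}m}{\sin\theta}(a_\ell^mY_{\ell+1}^m-b_\ell^mY_{\ell-1}^m)$ gives, after the same shift, $\bm F_{\bm\theta\bm\phi}=\bm F_{\bm\phi\bm\theta}=\diag(\bm D_{\bm\theta}\bm Y_{t+1}\bm{\hat c}_3)$. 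For $\partial_\theta^2$ I would invoke the differential equation satisfied by $Y_\ell^m$ (associated Legendre / Laplace--Beltrami), $\partial_\theta^2 Y_\ell^m=-\cot\theta\,\partial_\theta Y_\ell^m+\tfrac{m^2}{\sin^2\theta}Y_\ell^m-\ell(\ell+1)Y_\ell^m$; summing $\hat c_\ell^m$ against its three terms and re-expressing $\sum_{(\ell,m)\in\mathcal{I}_t}\hat c_\ell^m\partial_\theta Y_\ell^m(\bm x_i)$ through \cref{eq9} produces the three terms $\bm D_{\bm\theta}^2\bm Y_t\bm{\hat c}_1$, $\bm Y_t\bm{\hat c}_2$ and $\bm C_{\bm\theta}\bm D_{\bm\theta}\bm Y_{t+1}\bm{\hat c}_0$ of $\bm F_{\bm\theta\bm\theta}$ (the $\cot\theta$ producing the factor $\bm C_{\bm\theta}$, the $1/\sin^2\theta$ the extra factor $\bm D_{\bm\theta}$).

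I expect the one delicate point to be the $\bm\theta\bm\theta$ block: it requires tracking at once the factor $1/\sin\theta_j$ already present in the gradient, the further $1/\sin\theta_j$ or $\cot\theta_j$ produced by differentiating it or by a second use of \cref{partialYlm}, and the two index ranges $\mathcal{I}_t$ and $\mathcal{I}_{t+1}$, which must be matched through the conventions on $a_{-1}^m,b_{t+1}^m,b_{t+2}^m$ so that everything collapses onto $\bm Y_t$ and $\bm Y_{t+1}$. The $\bm\phi\bm\phi$ and $\bm\theta\bm\phi$ blocks are essentially one-line computations. Once all pieces are in matrix--vector form, assembling \cref{hs1} together with \cref{EthetaPhi}, \cref{Fthetaphi1} and \cref{Fthetaphi2} is immediate.
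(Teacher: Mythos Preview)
Your proposal is correct and follows essentially the same route as the paper: differentiate \cref{eq:gradientOp} once more, split via the Kronecker delta into an outer-product term and a diagonal term, and evaluate each block using \cref{partialYlm} together with the associated Legendre differential equation $\partial_\theta^2 Y_\ell^m=\tfrac{m^2}{\sin^2\theta}Y_\ell^m-\ell(\ell+1)Y_\ell^m-\cot\theta\,\partial_\theta Y_\ell^m$. The paper performs exactly these steps, including the same reindexing to land on $\bm Y_{t+1}$ for the $\bm\theta$-blocks, so there is nothing to add.
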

\begin{proof}

For the Hessian of $A_{N,t}$, from \cref{eq:gradientOp}, we have
\[
\begin{small}
\begin{aligned}
\frac{\partial^2}{\partial{\xi_j}\partial {\zeta_l}}A_{N,t}(X_N)
%&=\frac{8\pi}{N^2}\Re{\left[\sum_{\ell=0}^t\sum_{m=-\ell}^{\ell}\frac{\partial}{\partial \xi_j}\left(\sum_{i=1}^N \overline{Y_\ell^m(\bm x_i)}\right)\frac{\partial}{\partial\zeta_k}Y_\ell^m(\bm x_k)\right]}
%\\
%&
=\frac{8\pi}{N^2}\Re{\left[\sum_{(\ell,m)\in\mathcal{I}_t} \left(\frac{\partial}{\partial \xi_j} \overline{Y_\ell^m(\bm x_j)}\frac{\partial}{\partial\zeta_l}Y_\ell^m(\bm x_l)+\hat { c}_\ell^m\delta_{jl }\frac{\partial^2}{\partial\xi_l\partial\zeta_l}Y_\ell^m(\bm x_l)\right)\right]},
\end{aligned}
\end{small}
\]
for $\xi_j\in\{\theta_j,\phi_j\}$ and $\zeta_l\in\{\theta_l,\phi_l\}$.
Hence, by  definition, the Hessian can be written as in \cref{hs1},
%\begin{align}
%\mathcal{H}(A_{N,t}) =
%%\frac{8\pi}{N^2}\sum_{\ell=0}^t\sum_{m=-\ell}^\ell
%\Re
%\begin{bmatrix}
%\bm H_{\bm\theta\bm\theta} & \bm H_{\bm\theta\bm\phi}\\
%\bm H_{\bm\phi\bm\theta} & \bm H_{\bm\phi\bm\phi}\\
%\end{bmatrix}
%%=
%%\Re\left(\begin{bmatrix}
%%\bm D_{\bm\theta\bm\theta} & \bm D_{\bm\theta\bm\phi}\\
%%\bm D_{\bm\phi\bm\theta} & \bm D_{\bm\phi\bm\phi}\\
%%\end{bmatrix}
%%+
%%\begin{bmatrix}
%% \overline{\bm E_{\bm\theta}}\\
%%\overline{\bm E_{\bm\phi}}
%%\end{bmatrix}
%%\begin{bmatrix}
%%{\bm E_{\bm\theta}^\top} & {\bm E_{\bm\phi}^\top}
%%\end{bmatrix}\right)
%,
%\end{align}
%where each $\bm H_{\bm\xi\bm\zeta}\in \C^{N\times N}=\bm F_{\bm\xi\bm\zeta}+\overline{\bm E_{\bm\xi}} \bm E_{\bm \zeta}^\top$ is the sum of a diagonal matrix $\bm F_{\bm\xi\bm\zeta}$ and a rank-one matrix $\overline{\bm E_{\bm\xi}} \bm E_{\bm \zeta}^\top$ for $\bm \xi,\bm\zeta\in \{\bm\theta,\bm\phi\}$
where each $\bm F_{\bm\xi\bm\zeta}$ is  a diagonal matrix  and each $\bm E_{\bm \xi}$ is a vector for $\bm \xi,\bm\zeta\in \{\bm\theta,\bm\phi\}$ determined by $
\bm E_{\bm\xi} =  (\frac{8\pi}{N^2}\sum_{(\ell,m)\in\mathcal{I}_t}\frac{\partial}{\partial\xi_l}Y_\ell^m(\bm x_l))_{l\in[N]}\in\C^{N\times 1}
$
and
$
\bm F_{\bm\xi\bm\zeta} =  \diag(\frac{8\pi}{N^2}\sum_{(\ell,m)\in\mathcal{I}_t}\hat { c}_\ell^m\cdot\frac{\partial^2}{\partial\xi_l\partial\zeta_l}Y_\ell^m(\bm x_l))_{l\in[N]}.
$

Now by \cref{partialYlm}, similar to the derivation of \cref{eq9,eq9.2},
we can deduce that
\begin{align*}
\bm E_{\bm\theta} &=  \Big(\frac{8\pi}{N^2}\frac{1}{\sin\theta_l}\sum_{(\ell,m)\in\mathcal{I}_{t+1}}(a_{\ell-1}^m-b_{\ell+1}^m)Y_\ell^m(\bm x_l)\Big)_{l\in[N]},\\
\bm E_{\bm\phi} &=  \Big(\frac{8\pi}{N^2}\sum_{(\ell,m)\in\mathcal{I}_{t}}(\mathrm{i}m)Y_\ell^m(\bm x_l)\Big)_{l\in[N]},
\end{align*}
which are equivalent to \cref{EthetaPhi}.
%\begin{align}
%\bm E_{\bm\theta}=\bm D_\theta\bm Y_{t+1}\bm{\hat d}_1\mbox{ and }  \bm E_{\bm\phi}=\bm Y_{t}\bm{\hat d}_2.
%\end{align}

Repeating applying \cref{partialYlm}, we have
\begin{align}
%\label{eq2}
%\overline{Y_\ell^m(\theta,\phi)}&=Y_\ell^m(\theta,-\phi)=(-1)^m Y_\ell^{-m}(\theta,\phi),\\
%\label{eq3}
%\frac{\partial}{\partial\theta}Y_\ell^m&=\frac{1}{\sin\theta}\left[a_\ell^mY_{\ell+1}^m-b_\ell^mY_{\ell-1}^m\right],\\
%\label{eq4}
%\frac{\partial}{\partial\phi}Y_\ell^m&=\mathrm im Y_\ell^m,
\label{eq5}
\frac{\partial^2}{\partial\theta^2}Y_\ell^m&=\left[\frac{m^2}{\sin^2\theta}-\ell(\ell+1)\right]Y_\ell^m-\cot\theta\frac{\partial}{\partial\theta}Y_\ell^m,\quad
%\\
%\label{eq6}
\frac{\partial^2}{\partial\phi^2}Y_\ell^m=-m^2 Y_\ell^m,\\
\label{eq7}
\frac{\partial^2}{\partial\theta\partial\phi}Y_\ell^m&=\frac{\partial^2}{\partial\phi\partial\theta}Y_\ell^m=\mathrm i m\frac{\partial}{\partial\theta}Y_\ell^m = \frac{1}{\sin\theta}\left[\mathrm i m a_\ell^mY_{\ell+1}^m-\mathrm i m b_\ell^mY_{\ell-1}^m\right].
\end{align}
Hence, the $l$-th diagonal entries of $\bm F_{\bm\xi\bm\zeta}$ are given by
\begin{align*}
[\bm F_{\bm\theta\bm\theta}]_{l,l} &=  \frac{8\pi}{N^2}\sum_{(\ell,m)\in\mathcal{I}_t}\hat { c}_\ell^m\cdot\left(\frac{m^2}{\sin^2\theta_l}Y_\ell^m(\bm x_l)-\ell(\ell+1)Y_\ell^m(\bm x_l)-\cot\theta_l\frac{\partial}{\partial\theta_l}Y_\ell^m(\bm x_l)\right),\\
[\bm F_{\bm\phi\bm\phi}]_{l,l} &=  \frac{8\pi}{N^2}\sum_{(\ell,m)\in\mathcal{I}_t}\hat { c}_\ell^m\cdot (-m^2) Y_\ell^m(\bm x_l),\\
[\bm F_{\bm\theta\bm\phi}]_{l,l} &=  \frac{8\pi}{N^2}\sum_{(\ell,m)\in\mathcal{I}_{t+1}}\frac{1}{\sin\theta_l}[\mathrm i m(\hat { c}_{\ell-1}^m a_{\ell-1}^m-\hat { c}_{\ell+1}^m b_{\ell+1}^m)]Y_\ell^m(\bm x_l),
\end{align*}
which imply \cref{Fthetaphi1,Fthetaphi2}.
%\begin{align}
%\bm F_{\bm\theta\bm\theta}
%&= \diag(\bm D_{\bm\theta}^2\bm Y_t \bm {\hat c}_1-\bm Y_t \bm{\hat c}_2-\bm C_{\bm \theta} \bm D_{\theta} \bm Y_{t+1}\bm{\hat c}_0),\\
%\bm F_{\bm\theta\bm\phi}
%&=\bm F_{\bm\phi\bm\theta}= \diag(\bm D_{\bm\theta}\bm Y_{t+1} \bm {\hat c}_3),\\
%\bm F_{\bm\phi\bm\phi}
%&= \diag(-\bm Y_t \bm {\hat c}_1).
%\end{align}
We are done.
\end{proof}

\subsection{Computational time complexity}
\label{sub:Alg-complexity}
Note that in \cref{thm:AntGradientMatrixForm}, the coefficient vector $(\hat c_\ell^m)_{(\ell,m)\in\mathcal{I}_t}=\bm Y_t^\star \bm e$ and $a_\ell^m, b_\ell^m$ can be precomputed. Moreover, the vecotrs $\bm {\hat c}_k, \bm {\hat d}_k$ and diagonal matrices in \cref{thm:AntGradientMatrixForm,thm:HessianMatrixForm}  can be evaluated in the order of $\mathcal{O}(t^2+N)$. Thanks to  the nice structure of $\mathcal{H}(A_{N,t})=\mathcal{H}_1+\mathcal{H}_2$ in \eqref{hs1}, where $\mathcal{H}_1$ is formed by diagonal matrices and $\mathcal{H}_2$ is the rank one matrix, one only needs to implement the matrix vector multiplication. Moreover, in the trust-region algorithm, exact Hessian is not required, approximation of the Hessian could be enough for the desired convergence. In such a case, one can either use $\mathcal{H}_1$ or $\mathcal{H}_2$.

Regarding the evaluations involving $\bm Y_t, \bm Y_t^\star$, fast evaluations have been developed in terms of spherical harmonic transforms (SHTs). We use the package developed by Kunis and Potts \cite{kunis2003fast}, where it shows that the nonequispaced fast spherical Fourier transform (NFSFT) $\bm y =\bm Y_t(X_N) \bm{\hat c}$ to obtain $\bm y\in \C^N$ from  for a given vector $\bm{\hat c}\in \C^{(t+1)^2}$ as well as its adjoint $\bm {\hat c} = \bm Y_t(X_N)^\star \bm y$  can be done in the order of $\mathcal{O}(t^2\log^2{t}+N\log^2(\frac{1}{\epsilon}))$  with $\epsilon$ being a prescribed accuracy of the algorithms.

From above, we see that the evaluations of $f=A_{N,t}$, the gradient $g=\nabla A_{N,t}$, and the Hessian $\mathcal{H}(A_{N,t})$ only involve diagonal matrices, rank one matrices,  NFSFTs $\bm y =\bm Y_t(X_N) \bm{\hat c}$, and their adjoints
$\bm {\hat c} = \bm Y_t(X_N)^\star \bm y$. Therefore, the computational time complexity of $C_{f}+C_g+C_{\mathcal H}$ is of order $\mathcal{O}(t^2\log^2{t}+N\log^2(\frac{1}{\epsilon}))$. Therefore, the trust-region algorithm in \cref{alg:TR} for computing the spherical $t$-design point set  is with computational time complexity $\mathcal O(K_{TR}\cdot(t^2\log^2{t}+N\log^2(\frac{1}{\epsilon})))$.

\section{Numerical spherical $t$-designs}
\label{sec:spdApp}
In this section\footnote{All numerical experiments in this paper are conducted in MATLAB R2021b on a 64 bit Windows 10 Home desktop computer with Intel Core i9 9820X CPU and 32 GB DDR4 memory.}, we show that the numerical spherical $t$-designs constructed from different initial point sets using Algorithm~\ref{alg:TR}.
For Algorithm~\ref{alg:TR}, in view of the rotation invariance properties of the spherical-$t$ design,  we  preprocess the initial point set $X_N\subset\mathbb S^2$ by fixing the first point $\bm x_1=(\theta_1,\phi_1)=(0,0)\in X_N$ as the north pole point and the second point $\bm x_2=(\theta_2,0)\in X_N$  on the prime meridian. Moreover, we set $\varepsilon=2.2204\text{E-16}$ (floating-point relative accuracy of MATLAB)  and $K_{\max}=1\text{E+7}$. We  introduce four types of initial point sets on $\mathbb{S}^2$ as follows:

%Then we can shrink down as $X_N =(\theta_2,\ldots,\theta_N,\phi_3,\ldots,\phi_N)^\top=[\Theta_{N-1}\, \Phi_{N-2}]^\top$. Thus the size of gradient $\nabla A_{N,t}(X_N)$ and Hessian $H(A_{N,t})(X_N)$ also can be reduced. We use NFSFT and adjoint NFSFT technique to assemble objective function $A_{N,t}$ and its gradient and Hessian in \cref{subsec:avc}.

\begin{enumerate}[(I)]

\item Spiral points (SP). The spiral points $\bm x_k=(\theta_k,\phi_k)$ on $\mathbb S^2$ for $k\in[N]$ are generated by $\theta_k :=\arccos\big(\frac{2k-(N+1)}{N}\big)$ and
$\phi_k :=\pi(2k-(N+1))/\mathfrak{g}$,
%\begin{equation*}
%\theta_n :=\arccos\left(\frac{2n-(N+1)}{N}\right),\quad
%\phi_n :=\pi(2n-(N+1))\varphi^{-1},
%\end{equation*}
where $\mathfrak{g}=\frac{1+\sqrt{5}}{2}$ is the golden ratio \cite{swinbank2006fibonacci}. This is the Fibonacci spiral points on the sphere, same as  the initial spiral points in \cite{graf2011computation}. For the SP point sets, we set $N=N(t)=(t+1)^2$ for large $t\in \mathbb{N}$.

\item Uniformly distributed points (UD). We  generate uniformly distributed points on the unit sphere $\mathbb S^2$ according to the surface area element $\mathrm{d}\mu_2=\sin\theta\mathrm{d}\theta\mathrm{d}\phi$. By \cite{weisstein2004Uniformrand}, we generate  $k_i\in (0,1)$ and $s_i\in (0,1)$ uniformly  for $i\in[N]$ and define $\bm x_i=(\theta_i,\phi_i)$ by $\theta_i :=\arccos\left(1-2k_i \right)$, and $
    \phi_i :=2\pi s_i$.
%    \begin{equation*}
%    \theta_n :=\arccos\left(1-2k_n \right),\quad
%    \phi_n :=2\pi p_n.
%    \end{equation*}
For UD point sets, we set $N=N(t)=(t+1)^2$ for $t\in \mathbb{N}$.

\item Icosahedron vertices mesh points (IV). An icosahedron has $12$ vertices, $30$ edges, and $20$ faces. The faces of icosahedron are equilateral triangles. Icosahedron is a polyhedron whose vertices can be used as the starting points for sphere tessellation. After generating the icosahedron vertices, one can get the triangular surface mesh of Pentakis dodecahedron. The number $N$ of IV points must satisfy $N = N(k)= 4^{k-1}\times 10+2$ for $k\in\mathbb{N}$.  % For more details, we refer to \cite{semechko2019S2sample}.
    In this paper, we fix the relation between  $t$ and  $N$ to be $N\approx (t+1)^2$. That is, we set $t=t(N(k))=\lfloor \sqrt{4^{k-1}\times 10+2}-1\rfloor$, where $\lfloor \cdot\rfloor$ is the floor operator.

\item HEALpix points (HL). Hierarchical Equal Area isoLatitude Pixelation points \cite{gorski2005healpix} are the isolatitude points on the sphere give by subdivisions of a spherical surface which produce hierarchical equal areas. The number $N$ of HL points must satisfy $N=N(k)=12\times (2^{k-1})^2$ for $k\in\mathbb{N}$. Similarly, by requiring $N\approx (t+1)^2$, we set $t=t(N(k))=\lfloor 2^{k}\sqrt{3}-1\rfloor$.
\end{enumerate}

\subsection{Spherical $t$-designs of Platonic solids}
We first give a  numerical example to show the feasibility of trust-region method (using the full Hessian $\mathcal{H}(A_{N,t})$) in \cref{alg:TR} to obtain the numerical spherical $t$-designs that are the famous regular polyhedrons of Platonic solids. We consider the construction  of the regular tetrahedron,  octahedron and  icosahedron, which are known as the spherical $2$-design of $4$ points, the spherical $3$-design of $6$ points, and the spherical $5$-design of $12$ points, respectively. We generate three spiral point sets with $N_{te}=4$, $N_{oc}=6$, and  $N_{ic}= 12$, respectively.  Then by  \cref{alg:TR}, we reach $x^*=X_{N_{te}}$, $X_{N_{oc}}$, and $X_{N_{ic}}$, respectively  with  (1) $\sqrt{A_{N,t}(X_{N_{te}})}$ = 2.04E-16 and $\lVert\nabla A_{N,t}(X_{N_{te}})\rVert_{\infty}$ = 7.38E-16 for the tetrahedron spherical 2-design, where $\|\cdot\|_\infty$ denotes the $l_\infty$-norm; (2) $\sqrt{A_{N,t}(X_{N_{oc}})}$ = 4.66E-13 and $\lVert\nabla A_{N,t}(X_{N_{oc}})\rVert_{\infty}$ = 2.37E-12 for the octahedron spherical 3-design; and (3) $\sqrt{A_{N,t}(X_{N_{ic}})}$ = 2.83E-12 and $\lVert\nabla A_{N,t}(X_{N_{ic}})\rVert_{\infty}$ = 2.86E-13 for the  icosahedron spherical 5-design. The initial point sets and final numerical spherical $t$-designs are shown in \cref{Fig.label.Plato}.
%\end{exmp}

\begin{figure}[htpb!]
\centering
\begin{minipage}[t]{0.3\linewidth}
%\label{Fig.label.te}
\centering
\subfigure[Initial SP: $N_{te}=4$]{
\includegraphics[width=0.45\textwidth]{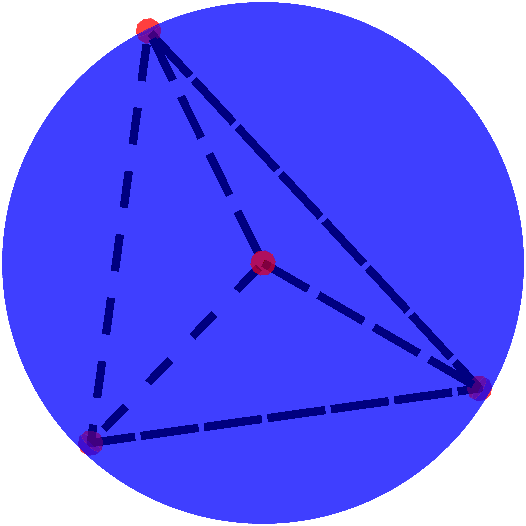}
\includegraphics[width=0.45\textwidth]{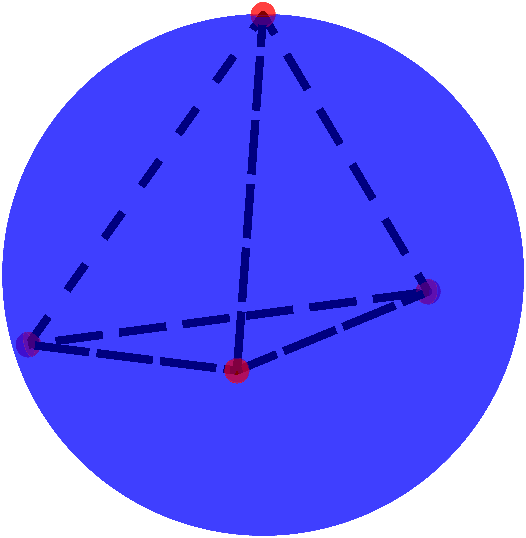}
}
\subfigure[Final: Tetrahedron]{
\includegraphics[width=0.45\textwidth]{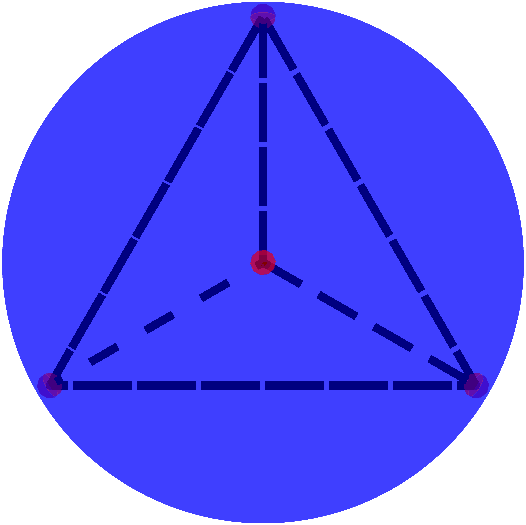}
\includegraphics[width=0.45\textwidth]{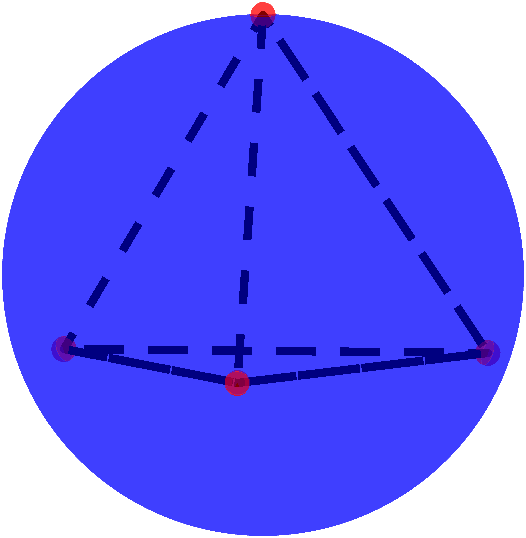}
}
%\caption{Tetrahedron}
\end{minipage}
\begin{minipage}[t]{0.3\linewidth}
%\label{Fig.label.ot}
\centering
\subfigure[Initial  SP: $N_{oc}=6$]{
\includegraphics[width=0.45\textwidth]{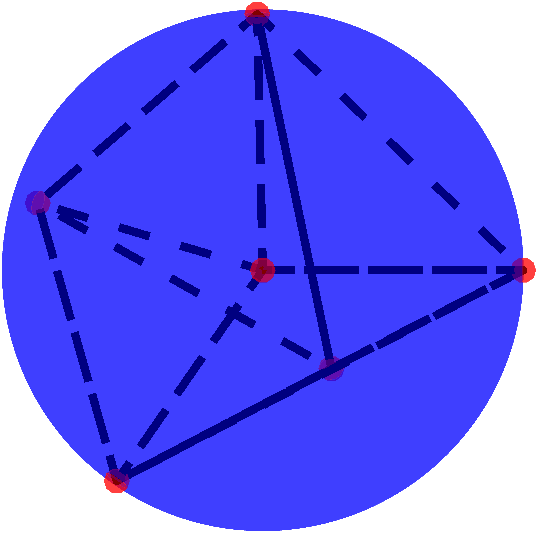}
\includegraphics[width=0.45\textwidth]{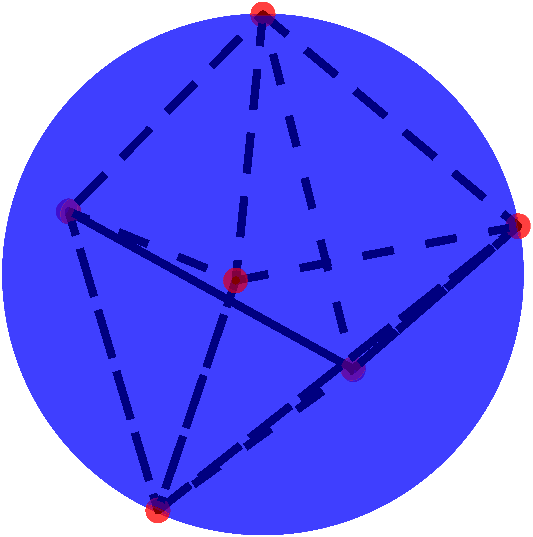}
}
\subfigure[Final: Octahedron]{
\includegraphics[width=0.45\textwidth]{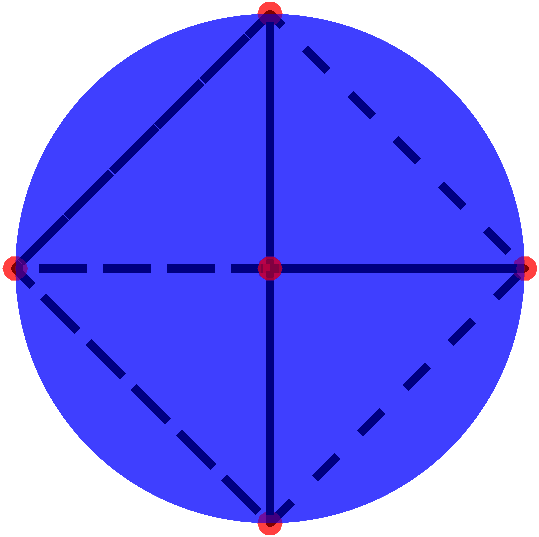}
\includegraphics[width=0.45\textwidth]{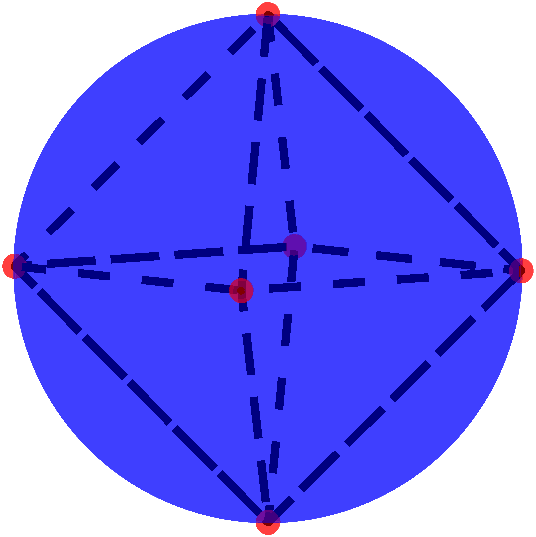}
}
%\caption{Octahedron}
\end{minipage}
\begin{minipage}[t]{0.3\linewidth}
%\label{Fig.label.ic}
\centering
\subfigure[Initial  SP: $N_{ic}=12$]{
\includegraphics[width=0.45\textwidth]{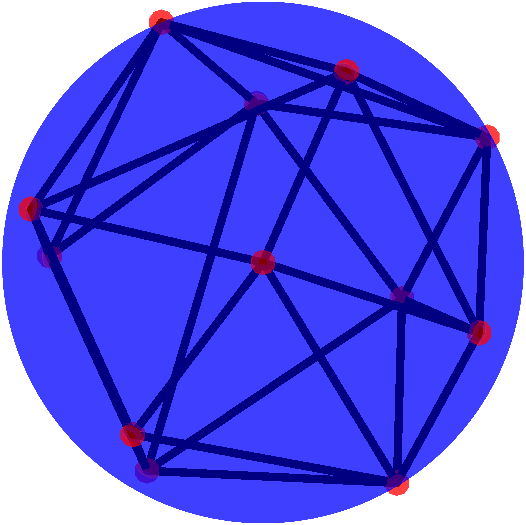}
\includegraphics[width=0.45\textwidth]{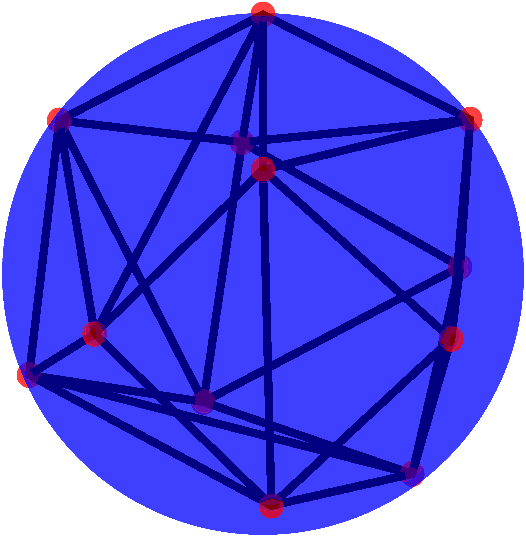}
}
\subfigure[Final: Icosahedron]{
\includegraphics[width=0.45\textwidth]{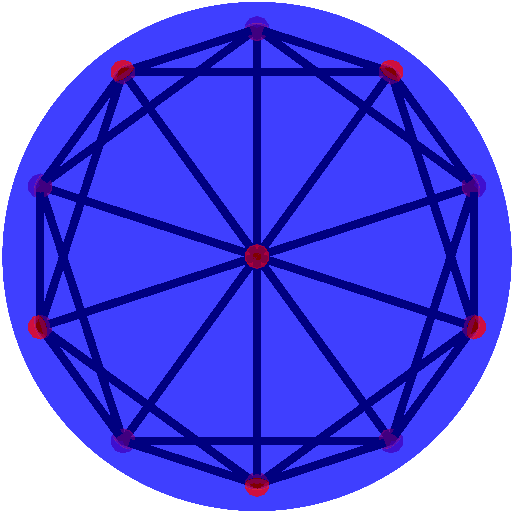}
\includegraphics[width=0.45\textwidth]{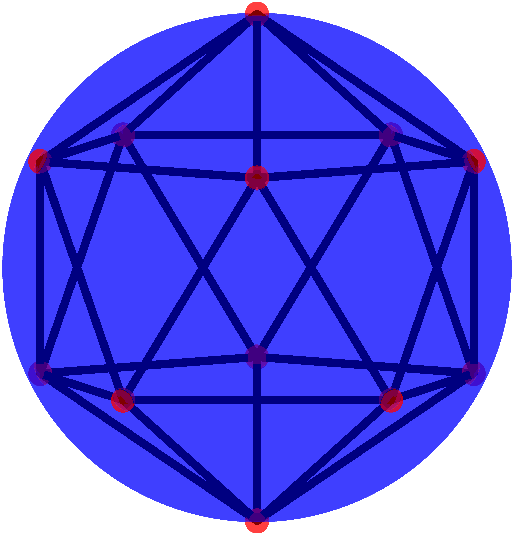}
}
%\caption{Icosahedron}
\end{minipage}
\caption{\rm Numerical simulations of spherical $t$-design for Platonic solids on $\mathbb S^2$ by using \cref{alg:TR}. Top view (left) and side view (right) for each initial SP point set  and its resulted  final point sets.
 (a) and (b): Tetrahedron.  (c) and (d): Octahedron.  (e) and (f): Icosahedron.
}
\label{Fig.label.Plato}
\end{figure}

\subsection{Spherical $t$-designs from four initial point sets}
\label{subsec:difinpt}
We list in  \cref{table1} the information, including degree $t$, number of points $N$, total number of iterations $K_{TR}$, $\sqrt{A_{N,t}(X_N)}$, $\lVert\nabla A_{N,t}(X_{N})\rVert_{\infty}$, and  time  of running the \cref{alg:TR} with the four initial points sets, that is, the SP points, the UD points,  the IV points, and the HL points. The final point sets are named as SPD, SUD, SID, and SHD, respectively. From \cref{table1}, one can see that we can reach significantly small values of $\sqrt{A_{N,t}(X_N)}$ up to the order of 1E-12 as well as  near machine precision of $\|\nabla A_{N,t}\|_\infty$ up to the order of 1E-16.  The number of total iteration $K_{TR}$ and the computational time  increases when $t$ increases.

\begin{table}[htpb!]
\centering
\caption{\rm Computing spherical $t$-designs by TR method from different initial point sets. We provide for each initial point sets (SP, UD, IV, HL) and each $t, N$, the number $K_{TR}$ of iterations to reach the final point sets (SPD, SUD, SID, SHD) with their  $\sqrt{A_{N,t}(X_N)}$, $\lVert\nabla A_{N,t}(X_N)\rVert_{\infty}$, and the running time, respectively. }\label{table1}
\begin{small}
\begin{tabular}{l|ll|lccl}
\hline
%\hline
 $X_N$ & $t$ & $N$ & $K_{TR}$ & $\sqrt{A_{N,t}(X_N)}$ & $\lVert\nabla A_{N,t}(X_N)\rVert_{\infty}$  & Time\\
\hline
%\hline
\multirow{15}{*}{SPD}
& 16 & 289 & 264 & 2.15E-12 & 7.04E-16 & 10.51 \quad s \\
~ & 32 & 1089 & 567 & 1.51E-12 & 7.93E-16 & 24.61 \quad s \\
~ & 64 & 4225 & 1087 & 1.13E-12 & 1.27E-15 & 2.01 \, min \\
~ & 128 & 16641 & 1929 & 1.55E-12 & 1.07E-15 & 11.16 min \\
~ & 256 & 66049 & 3234 & 1.13E-12 & 1.39E-15 & 32.50 min \\
~ & 512 & 263169 & 6049 & 1.18E-12 & 8.64E-15 & 4.59 \quad\, h \\
~ & 1024 & 1050625 & 9951 & 1.28E-12 & 3.80E-15 & 1.02 \quad\, d \\
~ & 25 & 676 & 422 & 1.73E-12 & 6.84E-15 & 15.38 \quad s \\
~ & 50 & 2601 & 764 & 1.58E-12 & 9.39E-15 & 46.52 \quad s \\
~ & 100 & 10201 & 1699 & 1.00E-12 & 8.51E-16 & 3.08 \, min \\
~ & 200 & 40401 & 2922 & 1.16E-12 & 2.30E-15 & 26.85 min \\
~ & 400 & 160801 & 4980 & 1.09E-12 & 4.22E-15 & 2.29 \quad\, h \\
~ & 800 & 641601 & 8489 & 1.53E-12 & 4.18E-14 & 21.74 \quad h \\
~ & 1600 & 2563601 & 18274 & 1.70E-10 & 9.26E-14 & 6.95 \quad\, d \\
~ & 3200 & 10246401 & 22371 & 1.07E-09 & 2.22E-12 & 2.07 \quad mo \\
%~ & 125 & 15876 & 1899 & 1.40E-12 & 7.83E-16 & 3.85 \, min \\
%~ & 250 & 63001 & 3078 & 9.78E-13 & 2.27E-15 & 20.99 min \\
%~ & 500 & 251001 & 6690 & 2.17E-12 & 1.08E-14 & 4.48 \quad\, h \\
%~ & 1000 & 1002001 & 9002 & 1.09E-12 & 9.47E-15 & 20.66 \quad h \\
%~ & 2000 & 4004001 & 18006 & 2.71E-11 & 1.76E-14 & 13.90 \quad d \\
\hline%\hline
\multirow{7}{*}{SUD}
%& 16 & 289 & 411 & 2.15E-12 & 3.57E-15 & 34.50 \quad s \\
%~ & 32 & 1089 & 1063 & 1.65E-12 & 3.89E-15 & 1.15 \, min \\
%~ & 64 & 4225 & 1868 & 1.44E-12 & 9.14E-15 & 5.88 \, min \\
%~ & 128 & 16641 & 5536 & 1.40E-12 & 5.06E-15 & 33.67 min \\
%~ & 256 & 66049 & 9653 & 1.33E-12 & 4.78E-15 & 4.10 \quad\, h \\
~ & 25 & 676 & 665 & 1.81E-12 & 8.49E-16 & 41.39	\quad s \\
~ & 50 & 2601 & 1660 & 1.44E-12 & 1.74E-14 & 1.72 \, min \\
~ & 100 & 10201 & 3986 & 1.40E-12 & 2.15E-14 & 8.90 \, min \\
~ & 200 & 40401 & 12494 & 1.73E-12 & 3.71E-14 & 2.01 \quad\, h \\
~ & 400 & 160801 & 24600 & 6.21E-12 & 7.32E-14 & 12.26 \quad h \\
~ & 800 & 641601 & 86972 & 2.04E-11 & 4.85E-13 & 6.11 \quad\, d \\
~ & 1000 & 1002001 & 118693 & 7.35E-12 & 4.07E-14 & 11.54 \quad d \\
%~ & 125 & 15876 & 4690 & 1.37E-12 & 5.91E-15 & 13.93 min \\
%~ & 250 & 63001 & 10699 & 1.36E-12 & 5.40E-15 & 1.35 \quad\, h \\
%~ & 500 & 251001 & 41173 & 2.89E-12 & 3.73E-14 & 1.43 \quad\, d \\
\hline%\hline
\multirow{8}{*}{SID} & 11 & 162 & 71 & 1.17E-12 & 2.93E-15 & 27.38 \quad s \\
~ & 24 & 642 & 300 & 2.17E-12 & 5.84E-15 & 53.56	\quad s \\
~ & 49 & 2562 & 1001 & 1.58E-12 & 9.68E-15 & 1.72 \, min \\
~ & 100 & 10242 & 1929 & 1.61E-12 & 1.23E-15 & 5.24 \, min \\
~ & 201 & 40962 & 3796 & 1.58E-12 & 3.65E-15 & 32.61 min \\
~ & 403 & 163842 & 8344 & 1.57E-12 & 1.25E-15 & 3.72 \quad\, h \\
~ & 808 & 655362 & 22424 & 2.85E-12 & 2.44E-14 & 2.04 \quad\, d \\
~ & 1618 & 2621442 & 49262 & 1.32E-10 & 5.19E-14 & 18.37 \quad d \\
\hline%\hline
\multirow{8}{*}{SHD} & 12 & 192 & 191 & 3.89E-12 & 1.71E-14 & 8.79 \quad\, s \\
~ & 26 & 768 & 407 & 2.58E-12 & 6.58E-16 & 22.34 \quad s \\
~ & 54 & 3072 & 725 & 1.68E-12 & 1.50E-15 & 1.35 \, min \\
~ & 109 & 12288 & 1221 & 1.41E-12 & 8.80E-16 & 3.42 \, min \\
~ & 220 & 49152 & 2045 & 1.54E-12 & 9.08E-16 & 17.59 min \\
~ & 442 & 196608 & 3608 & 1.48E-12 & 3.48E-15 & 1.99 \quad\, h \\
~ & 885 & 786432 & 5757 & 1.39E-12 & 5.53E-15 & 10.41 \quad h \\
~ & 1772 & 3145728 & 9814 & 1.48E-12 & 1.04E-14 & 4.61 \quad\, d \\
\hline
\end{tabular}
\end{small}
\end{table}

For the computational time complexity of \cref{alg:TR}, from \cref{sub:Alg-complexity}, we know it is of order $\mathcal{O}(K_{TR}\cdot( t^2\log^2(t)+N\log^2(\frac1\epsilon))$. Since $N\approx (t+1)^2$ in our setting, it is essentially of order  $\mathcal{O}(K_{TR}\cdot t^2\log^2(t))$. To confirm this, from each row of \cref{table1}, we have degree $t$, total number $K_{TR}$ of iterations, and the Time (in second) to generate data points of the form $(K_{TR}\cdot t^2\log^2(t),\text{Time})$. Then, we use log-log plot to show all data points and use linear fitting to fit the data points. The result is plotted in \cref{Fig.loglog}, where we can see the log-log plot of the data points is close to the fitted straight line. This confirms that the computational time complexity of \cref{alg:TR} does follow the order of $\mathcal{O}(K_{TR}\cdot t^2\log^2(t))$.

\begin{figure}[hptb!]
\centering
\includegraphics[width=0.60\textwidth,height=0.4\textwidth]{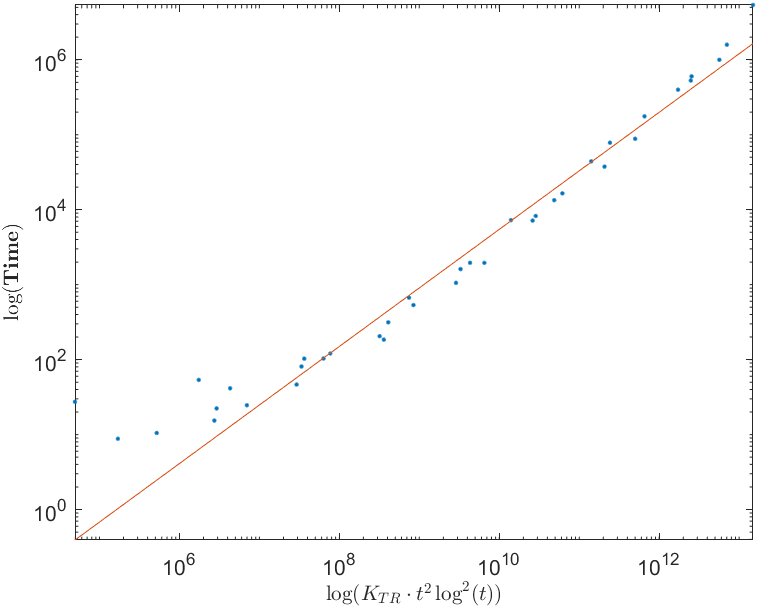}
\caption{\rm The log-log plot of Time vs. $K_{TR}\cdot t^2\log^2(t)$. Blue dots are data points of $(K_{TR}\cdot t^2\log^2(t),\text{Time})$ from \cref{table1}. Red line is the fitted linear line.}
\label{Fig.loglog}
\end{figure}

\subsection{Spherical $t$-designs for function approximation}
\label{subsec:proj}
With the obtained spherical $t$-design point sets, we can use it to approximate function as their discrete samples. We demonstrate below how this can be done for a special function class of function,  the Wendland functions, defined on the sphere.

A spherical signal is typically sampled from a function $f:\sph^2\rightarrow \C$ on certain point set  $X_N=\{\bm x_i\setsep i\in[N]\}$, that is, one only has the sample points $\{(\bm x_i, f(\bm x_i))\setsep i\in[N]\}$. Note that $X_N$ is not necessarily a spherical $t$-design point set. We would like to see how well it can be approximated by a polynomial space $\Pi_T$. This is equivalent to finding an $f_T\in \Pi_T$  that solves the minimization problem: $f_T=\arg\min_{p\in \Pi_T} \|f|_{X_N}-p|_{X_N}\|,
$
%\begin{align}
%\label{projPit}
%f_T=\arg\min_{p\in \Pi_T} \|f|_{X_N}-p|_{X_N}\|,
%\end{align}
where $ f|_{X_N}:=\bm f:=[f(\bm x_1),\ldots,f(\bm x_N)]^\top$. Then we have $f = f_T+g$ with $g=f-f_T$ being its residual. Note that $f_T=\sum_{(\ell,m)\in\mathcal{I}_T} \hat c_\ell^m Y_\ell^m$ for some $\bm {\hat c}:=(\hat c_\ell^m)_{(\ell,m)\in\mathcal{I}_T}$. Hence, $f_T|_{X_N}=:\bm f_T=\bm Y_T (X_N) \bm{\hat c}$. The minimization problem  is equivalent to
\begin{align}
\label{projPit2}
\min_{\bm {\hat c}}\lVert \bm f-\bm Y_T \bm {\hat c}\rVert,
\end{align}
which can be solved by the least square method. In fact, to find $\bm {\hat c}$ such that $\bm Y_T \bm {\hat c}=\bm f$, one usually uses a diagonal matrix $\bm W=\diag(w_1,\ldots, w_N)$ from some weight $\mathbf  w :=w|_{X_N}:=\{w_1,\ldots, w_N\}$ for the purpose of preconditioning. Define $b := \bm Y_T^\star\bm W\bm f$ and matrix $A=\bm Y_T^\star\bm W\bm Y_T$. Then, $\bm Y_T \bm {\hat c}=\bm f$ can be solved by the   normal equation: $A\bm {\hat{c}}= b$, which can be done by the conjugate gradient (CG) method. See \cref{alg:projCG}. We remark that we do not need to form $A=\bm Y_T^\star\bm W\bm Y_T$ explicitly but simply the matrix-vector realization $A\bm {\hat c}$, which can be done fast through $\bm Y_T^\star(\bm W(\bm Y_T\bm{\hat c}))$ using the fast spherical harmonic transforms that we discussed in \cref{sub:Alg-complexity}.

\begin{algorithm}[htpb!]
  \caption{Projection by Conjugate Gradient Algorithm}
  \label{alg:projCG}
  \begin{algorithmic}[1]
    \REQUIRE
      {$T$: polynomial degree;
      $X_N$: spherical point set;
      $\mathbf w$: weights of $X_N$;
      $K_{\max}$: maximum iterations;
      $\varepsilon$: termination tolerance;

      Initialize $x=0$, $k=0$, $r_0=b=\bm Y_T^\star\bm W\bm f$, $A=\bm Y_T^\star\bm W\bm Y_T$ with $\bm W=\diag(\mathbf w)$.}
    \WHILE{$\lVert r_{k+1}\rVert>\varepsilon$ and $k\leq K_{\max}$}
      \IF{$k=0$}
      \STATE $p_1=r_0$
      \ELSE
      \STATE $p_{k+1}=r_k+\frac{\lVert r_k\rVert^2}{\lVert r_{k-1}\rVert^2}p_k$
      \ENDIF
      \STATE Compute $\alpha=\frac{\lVert r_k\rVert^2}{p_{k+1}^\top Ap_{k+1}}$. Set $x_{k+1}=x_k+\alpha p_{k+1}$ and $r_{k+1}=r_k-\alpha Ap_{k+1}$.
      \STATE $k=k+1$
    \ENDWHILE
    \ENSURE
      {$\bm{\hat c}=:x^*\in\C^{(T+1)^2}$.}
  \end{algorithmic}
\end{algorithm}

%We shall mention that \cref{alg:projCG} is for the point sets which are not equal weight. Since spherical $t$-designs point set $X_N$ is equal weight ($w_i=\frac{1}{N}$), thus $A=\frac{1}{N}\mathbf Y_t^*\mathbf Y_t=\mathbf I$ will be symmetric and positive definite. Then by \cref{ee}, the eigenvalues of $A$ are all equal, thus it indicate that \cref{alg:projCG} is no need for projecting $f$ on $\mathbb S^2$ by spherical $t$-designs point set $X_N$ (do or without doing will have no impact for later procedure).

%\subsection{Approximation of Wendland functions}
%\label{subsec:approx}
%With the obtained spherical $t$-design point sets, we can use them to represent functions defined on the sphere. More precisely, given a  point set $X_N$ with weight function $W$, we can sample a function $f$ on $X_N$ as data $v=f(X_N)$, and compute its projection $v_T=f_T(X_N)$ with $\min_{f_T\in\Pi_T}\|v-f_T({X_N})\|_2$ on $\Pi_T$ for some $T$ and the residual $r=v-v_T$ based on \cref{alg:projCG}.

In what follows, we set the  maximum iterations $K_{\max}=1000$ and termination tolerance $\varepsilon=2.2204\text{E-16}$ in \cref{alg:projCG}. We use the relative projection $l_2$-error (with Euclidean norm), i.e., $err(f,f_T):=\frac{\lVert \bm f-\bm f_T\rVert}{\lVert \bm f\rVert}$,  to measure how  good the  approximation is under different kinds of point sets. We demonstrate our results with $f$ to be the combinations of normalized Wendland functions,  which are a family of compactly supported radial basis functions (RBF). Let $(\xi)_{+}:=\max\{\xi,0\}$ for $\xi\in\mathbb{R}$. The original Wendland functions are
\begin{equation*}
  \tilde\phi_k(\xi) := \begin{cases}
  (1-\xi)_{+}^{2}, & k = 0,\\[1mm]
  (1-\xi)_{+}^{4}(4\xi + 1), & k = 1,\\[1mm]
  \displaystyle (1-t)_{+}^{6}(35\xi^2 + 18\xi + 3)/3, & k = 2,\\[1mm]
  (1-\xi)_{+}^{8}(32\xi^3 + 25\xi^2 + 8\xi + 1), & k = 3,\\[1mm]
  \displaystyle (1-\xi)_{+}^{10}(429\xi^4 + 450\xi^3 + 210\xi^2 + 50\xi + 5)/5, & k = 4.
  \end{cases}
\end{equation*}
The normalized (equal area) Wendland functions are     $\phi_k(\xi) := \tilde\phi_k\Bigl(\frac{\xi}{\Delta_{k}}\Bigr)$ with $\Delta_{k} := \frac{(3k+3)\Gamma(k+\frac{1}{2})}{2\:\Gamma(k+1)}$ for $k\geq0$.
%\begin{equation*}
%    \phi_k(\xi) := \tilde\phi_k\Bigl(\frac{\xi}{\Delta_{k}}\Bigr),\quad \Delta_{k} := \frac{(3k+3)\Gamma(k+\frac{1}{2})}{2\:\Gamma(k+1)},\quad k\geq0.
%\end{equation*}
The Wendland functions $\phi_k(\xi)$ pointwise converge to Guassian when $k\to\infty$, refer to \cite{chernih2014wendland}. Thus the main changes as $k$ increases is the smoothness of $f$. Let $\bm z_1:=(1,0,0),\bm z_2:=(-1,0,0),\bm z_3:=(0,1,0),\bm z_4:=(0,-1,0),\bm z_5:=(0,0,1),\bm z_6:=(0,0,-1)$ be regular octahedron vertices and define (\cite{gia2010multiscale})
\begin{equation}\label{eq:Phi}
 f_{k}(\bm x):=\sum_{i=1}^{6}\phi_k(\|\bm z_{i}-\bm x\|), \quad k\geq0.
\end{equation}
%where $\lvert\cdot\rvert$ is the Euclidean distance.
The function $f_{k}$ are in $W^{k+\frac{3}{2}}(\mathbb S^2)$, where $W^{\tau}(\mathbb S^2):=\{f\in L_2(\mathbb S^2):\sum_{\ell=0}^\infty\sum_{\lvert m\rvert\leq\ell} (1+\ell)^{2\tau}\lvert\hat f_{\ell}^m\rvert^2\}<\infty\}$ is the Sobolev space with smooth parameter $\tau>1$. The function $f_k$ has limited smoothness at the centers $\bm z_i$ and at the boundary of each cap with center $\bm z_i$. These features make $f_{k}$ relatively difficult to approximate in these regions, especially for small $k$.

%For Gauss-Legendre tensor product rule points (GL) \cite{hesse2012numerical}, which are the points satisfy polynomial-exact but not equal weight quadrature rule. We set GL with degree $t\approx\sqrt{2}(n+1)-1$ and $N=2(n+1)^2$, where $n$ is integer.

For all initial point sets  SP, UD, IV, HL, and their  final point sets SPD, SUD, SID, SHD of spherical $t$-designs  with $t=t(N)$ being determined in \cref{sec:spdApp}, we set the input polynomial degree $T=\frac{t}{2}$ and (equal) weight $\mathbf w\equiv\frac{4\pi}{N}$ in \cref{alg:projCG}. Then for degree $t\approx 200$ and $N\approx (t+1)^2$,  we show the projection errors $err(f_k,f_T)$ in \cref{table:err2} of the five RBF functions $f_0, \ldots, f_4$ defined in \eqref{eq:Phi}.  We can see that the order of the errors decreases significantly from $-4$ up to $-11$ with respect to $k$ in $f_k$ for  each of the final spherical $t$-design point sets, while the order of the errors decreases from $-4$ up to $-9$ for the initial point sets. This experiment demonstrates the superiority of spherical $t$-designs over normal structure point sets  in terms of function approximation.

%Moreover,  we display  in \cref{Fig.labelw1,Fig.labelw2} the respective figures of the projection $f_T$ and residual $r$  for RBF $f_4$ with degree $t\approx 200$.
%For the projection error in \cref{table:err2},
%Compare to GL, the three spherical $t$-designs: SPD, SID and SHD are better than it, especially they are better than SUD.

\begin{table}[phtb!]
    \centering
    \caption{\rm Relative $l_{2}$-errors $err(f_k,f_T)$ for Wendland functions $f_0,\ldots, f_4$ approximated by $\Pi_{T}$ functions for $T=\frac{t}{2}$ and $\mathbf w\equiv\frac{4\pi}{N}$ in \cref{alg:projCG} with different point sets. }\label{table:err2}
    \begin{small}
    \begin{tabular}{lll|lllll}
    \hline
        $t$ & $N$ & $Q_N$ & $f_0$ & $f_1$ & $f_2$ & $f_3$ & $f_4$ \\
%        \hline
%        199 & 40328 & GL & 4.71E-04 & 3.95E-06 & 7.07E-08 & 2.31E-09 & 2.31E-10 \\
%        399 & 160178 & GL & 1.67E-04 & 3.22E-07 & 1.45E-09 & 1.23E-10 & 8.76E-11 \\
        \hline
        200 & 40401 & SP & 5.64E-04 & 3.19E-06 & 5.25E-08 & 3.39E-09 & 3.21E-09 \\
        200 & 40401 & SPD & 5.78E-04 & 3.20E-06 & 5.25E-08 & 1.69E-09 & 8.92E-11 \\
        \hline
        200 & 40401 & UD & 6.09E-04 & 3.07E-06 & 8.50E-08 & 8.71E-08 & 8.53E-08 \\
        200 & 40401 & SUD & 6.99E-04 & 3.51E-06 & 5.63E-08 & 1.79E-09 & 1.07E-10 \\
        \hline
        201 & 40962 & IV & 8.12E-04 & 3.32E-06 & 5.28E-08 & 4.57E-09 & 6.10E-08 \\
        201 & 40962 & SID & 6.15E-04 & 3.11E-06 & 5.08E-08 & 1.64E-09 & 8.74E-11 \\
        \hline
        220 & 49152 & HL & 5.98E-04 & 2.28E-06 & 3.18E-08 & 8.68E-09 & 8.28E-09 \\
        220 & 49152 & SHD & 5.98E-04 & 2.28E-06 & 3.04E-08 & 8.11E-10 & 3.59E-11 \\
%          \hline
%        400 & 160801 & SP & 1.49E-04 & 2.05E-07 & 1.47E-09 & 1.33E-09 & 1.45E-09 \\
%        400 & 160801 & SPD & 1.49E-04 & 2.06E-07 & 8.61E-10 & 7.40E-12 & 2.28E-12 \\
%        400 & 160801 & UD & 1.22E-04 & 1.86E-07 & 8.07E-08 & 8.55E-08 & 9.02E-08 \\
%        400 & 160801 & SUD & 1.68E-04 & 2.19E-07 & 1.23E-09 & 9.33E-10 & 1.02E-09 \\
%        403 & 163842 & IV & 2.05E-04 & 2.14E-07 & 3.98E-09 & 3.88E-09 & 5.59E-08 \\
%        403 & 163842 & SID & 1.80E-04 & 2.18E-07 & 8.81E-10 & 7.46E-12 & 2.30E-12 \\
%        442 & 196608 & HL & 1.46E-04 & 1.45E-07 & 8.19E-10 & 3.94E-10 & 3.11E-10 \\
%        442 & 196608 & SHD & 1.46E-04 & 1.45E-07 & 4.89E-10 & 4.07E-12 & 2.39E-12 \\
               \hline
    \end{tabular}
    \end{small}
\end{table}

\section{Spherical framelets from spherical $t$-designs}
\label{sec:frmain}
In this section, we detail the construction and  characterizations of the (semi-discrete) spherical framelet systems based on the spherical $t$-designs. A truncated system is then introduced and  the fast spherical framelet transforms in terms of the filter banks and the fast spherical harmonic transforms are then developed.

\subsection{Construction and characterizations}
Following the setting of the paper by Wang and Zhuang \cite{wang2020tight} on framelets defined on manifolds, we first define (semi-discrete) framelet system on the sphere. Let functions $\Psi:=\{\alpha;\beta_1,\ldots,\beta_n\}\subset L^1(\mathbb R)$ be associated with a filter bank
%\begin{align}
$\eta:=\{a;b_1,\ldots,b_n\}\subset l_1(\mathbb Z):=\{h=\{ h_k\}_{k\in\mathbb Z}\subset\mathbb C \setsep \sum_{k\in\mathbb Z}\lvert h_k\rvert<\infty\}$
%\end{align}
with the following relations
\begin{equation}
\label{eq:refinement:eta}
\hat\alpha(2\xi)=\hat a(\xi)\hat\alpha(\xi),\quad\hat\beta_s(2\xi)=\hat b_s(\xi)\hat\alpha(\xi),\quad \xi\in\mathbb R,\, s\in[n],
\end{equation}
where for a function $f\in L^1(\R)$, its Fourier transform $\hat f$ is defined by $\hat f(\xi):=\int_\R f(x)e^{-2\pi \mathrm i x\xi}dx$, and  for a filter (mask) $h=\{h_k\}_{k\in\mathbb Z}\subset\mathbb C$, its Fourier series  $\hat h$ is defined by $\hat h(\xi):=\sum_{k\in\mathbb Z}h_k \mathrm e^{-2\pi\mathrm ik\xi}$, for $\xi\in\mathbb R$.
The first equation of \cref{eq:refinement:eta} is said to be the refinement equation with $\alpha$ being the refinable function associated with the refinement mask $a$ (low-pass filter). The functions $\beta_s$ are framelet generators associated with framelet masks $b_s$ (high-pass filter) for $s\in[n]$, which can be derived by extension principles \cite{daubechies2003framelets,ron1997affine}.

A \emph{quadrature (cubature) rule} $Q_{N_j}=(X_{N_j},\mathbf w_{j})$ on $\mathbb S^2$ at scale $j$ is a collection $Q_{N_j}:=\{(\bm x_{j,k},w_{j,k})\setsep k\in[ N_j]\}\subset\mathbb S^2\times \R$ of point set $X_{N_j}:=\{\bm x_{j,k}\setsep k\in[N_j]\}$ and weight $\mathbf w_{j}:=\{w_{j,k}\setsep k\in[N_j]\}$, where $N_j$ is the number of points at scale $j$. We said that the quadrature rule $Q_{N_j}$ is \emph{polynomial-exact} up to degree $t_j\in\mathbb N_0$ if
%\begin{align}
%Q_{N_j,t}:=Q_{N_j,t}^{(j)}\equiv
$\sum_{k=1}^{N_j} w_{j,k} p(\bm x_{j,k})=\int_{\mathbb S^2}f(\bm x)\mathrm d\mu_2(\bm x)$  for all $p\in\Pi_{t_j}$.
%\end{align}
We call such a $Q_{N_j}=:Q_{N_j,t_j}$ to be a polynomial-exact quadrature rule of degree $t_j$.   The spherical $t$-design  $X_N=\{\bm x_1,\ldots, \bm x_N\}$ forms a polynomial-exact quadrature rule $Q_{N,t}:=(X_N, \mathbf w)$ of degree $t$ with weight $\mathbf w\equiv \frac{4\pi}{N}$.

Now given a sequence  $\mathcal Q:=\{Q_{N_j,t_j}\}_{j\geq J}$ of polynomial-exact quadrature rules, we can  define spherical framelets $\varphi_{j,k}$ and $\psi_{j,k}^{(s)}$ for $s\in[n]$ as follows:
\begin{align}
\label{eq24}
\varphi_{j,k}(\bm x)&:=\sqrt{w_{j,k}}\sum_{\ell=0}^{\infty}\sum_{m=-\ell}^{\ell} \hat\alpha(\frac{\lambda_{\ell,m}}{t_j})\overline{Y_{\ell}^{m}(\bm x_{j,k})}Y_{\ell}^{m}(\bm x),\\
\label{eq25}
\psi_{j,k}^{(s)}(\bm x)&:=\sqrt{w_{j+1,k}}\sum_{\ell=0}^{\infty}\sum_{m=-\ell}^{\ell} \hat{\beta_{s}}(\frac{\lambda_{\ell,m}}{t_j})\overline{Y_{\ell}^{m}(\bm x_{j+1,k})}Y_{\ell}^{m}(\bm x),
\end{align}
for $\bm x\in\sph^2$, where in this paper, we set $\lambda_{\ell,m}=\ell$.
The {\em (semi-discrete) spherical framelet system} $\mathcal{F}_{J}(\Psi,\mathcal Q)$ starting at a scale $J\in\mathbb Z$ is then defined to be
\begin{align}
\label{stf}
\mathcal{F}_{J}(\Psi,\mathcal Q):=\{\varphi_{J,k}:k\in [N_J]\}\cup\{\psi_{j,k}^{(s)}:k\in [N_{j+1}],s\in[n]\}_{j=J}^\infty.
\end{align}

By \cite[Corollary~2.6]{wang2020tight}, we immediately have the following characterization result for the system $\mathcal{F}_{J}(\Psi,\mathcal Q)$ to be a tight frame for $L^2(\sph^2)$.

\begin{thm}
\label{tightsfm}
Let $\alpha\in L^1(\mathbb R)$ be a band-limited function such that $\supp\hat{\alpha}\subseteq[0,\frac12]$ and $\Psi:=\{\alpha;\beta_s,\ldots,\beta_n\}\subset L^1(\R)$ be a set of functions associating with a filter bank $\eta:=\{a;b_1,\ldots,b_n\}\subset l_1(\Z)$ as in \cref{eq:refinement:eta} . Let $j\in\mathbb Z$ and $Q_{N_j,t_j}=\{(\bm x_{j,k},w_{j,k}\equiv \frac{4\pi}{N_j})\setsep k\in[N_j]\}$ be the quadrature rule determined by a spherical $t_j$-design  $X_{N_j}=\{\bm x_{j,1},\ldots, \bm x_{j,N_j}\}\subset \sph^2$ satisfying $t_{j+1}=2t_j$. Define $\mathcal{F}_{J}(\Psi,\mathcal Q)$ as in \eqref{stf}. Let $J_0\in\Z$ be fixed. Then the following are equivalent.
\begin{enumerate}[\rm (i)]
\item
The framelet system $\mathcal{F}_{J}(\Psi,\mathcal Q)$ is a tight frame for  $L^2(\mathbb S^2)$ for all $J\ge J_0$, that is,
%\begin{align}
%\label{stf2}
$f =\sum_{k=1}^{N_J}\langle f,\varphi_{J,k}\rangle\varphi_{J,k}+\sum_{j=J}^{\infty}\sum_{k=1}^{N_{j+1}}\sum_{s=1}^{n}\langle f,\psi_{j,k}^{(s)}\rangle\psi_{j,k}^{(s)}$
%\end{align}
for all $f\in L^2(\mathbb{S}^2)$ and $J\ge J_0$,
%or equivalently,
%\begin{align}
%\lVert f\rVert_{L^2(\mathbb S^2)}^2=\sum_{k=1}^{N_J}\lvert\langle f,\varphi_{J,k}\rangle_{L^2(\mathbb{S}^2)}\rvert^2+\sum_{j=J}^\infty\sum_{k=1}^{N_{j+1}}\sum_{s=1}^n\lvert\langle f,\psi_{J,k}^{(s)}\rangle_{L^2(\mathbb{S}^2)}\rvert^2,
%\end{align}
\item The generators in $\Psi$ satisfy
\begin{align}
\label{cd}
&\lim_{j\to\infty}\lvert\hat\alpha(\frac{\lambda_{\ell,m}}{t_j})\rvert=1,\quad\ell\geq 0,\lvert m\rvert\leq\ell,\\
&\lvert\hat\alpha(\frac{\lambda_{\ell,m}}{t_{j+1}})\rvert^2=\lvert\hat\alpha(\frac{\lambda_{\ell,m}}{t_j})\rvert^2+\sum_{s=1}^n\lvert\hat\beta_s(\frac{\lambda_{\ell,m}}{t_j})\rvert^2,\quad\ell\geq 0,\lvert m\rvert\leq\ell,j\geq J_0.
\end{align}
\item The refinable function $\alpha$ satisfies \cref{cd} and the filters in $\eta$ satisfy
\begin{align}
\label{cfc}
\lvert\hat a(\frac{\lambda_{\ell,m}}{t_j})\rvert^2+\sum_{s=1}^n\lvert\hat b_s(\frac{\lambda_{\ell,m}}{t_j})\rvert^2=1\quad\forall\ell,m\in\mathcal{I}_{\alpha}^j,\forall j\geq J_0+1,
\end{align}
where $\mathcal{I}_{\alpha}^j:=\{\ell\in\mathbb N_0,\lvert m\rvert\leq\ell:\hat\alpha(\frac{\lambda_{\ell,m}}{t_j})\neq 0\}$.
\end{enumerate}
\end{thm}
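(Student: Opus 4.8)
The plan is to invoke \cite[Corollary~2.6]{wang2020tight} directly, after checking that our setup fits the hypotheses of that result. The key observation is that a spherical $t_j$-design $X_{N_j}$ with weights $w_{j,k}\equiv\frac{4\pi}{N_j}$ is exactly a polynomial-exact quadrature rule $Q_{N_j,t_j}$ of degree $t_j$, as noted before \cref{eq24}; and since $\supp\hat\alpha\subseteq[0,\frac12]$, the filtered kernel $\hat\alpha(\lambda_{\ell,m}/t_j)=\hat\alpha(\ell/t_j)$ is supported on $\ell\le t_j/2$, so that the functions $\varphi_{j,k}$ in \cref{eq24} are polynomials of degree at most $t_j/2$. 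Consequently, in the reproduction identities needed for the frame property, products such as $\varphi_{j,k}\overline{\varphi_{j,k}}$ and $\psi_{j,k}^{(s)}\overline{\psi_{j,k}^{(s)}}$ lie in $\Pi_{t_j}$ (using $t_{j+1}=2t_j$, so the scale-$(j{+}1)$ generators involve degree $\le t_{j+1}/2=t_j$), and the quadrature rule $Q_{N_j,t_j}$ integrates them exactly. This exactness is precisely what is required so that the semi-discrete (quadrature-discretized) system behaves like its continuous counterpart, which is the content of \cite[Corollary~2.6]{wang2020tight}.

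First I would state the correspondence between the objects here and those in \cite{wang2020tight}: the eigenpairs are $(\ell(\ell+1),Y_\ell^m)$ with the ``frequency'' $\lambda_{\ell,m}=\ell$ chosen as in the lines following \cref{eq25}, the dilation factor is $2$ (matching $t_{j+1}=2t_j$), and $\{Q_{N_j,t_j}\}_{j\ge J}$ is the admissible sequence of quadrature rules. Then (i)$\Leftrightarrow$(ii) is a restatement of \cite[Corollary~2.6]{wang2020tight}: condition \cref{cd} (first line) is the ``partition-of-unity at infinity'' normalization ensuring $\lim_{j\to\infty}\sum_k\langle f,\varphi_{j,k}\rangle\varphi_{j,k}=f$, and the telescoping identity \cref{cd} (second line) is the frame/tightness condition $|\hat\alpha(\cdot/t_{j+1})|^2=|\hat\alpha(\cdot/t_j)|^2+\sum_s|\hat\beta_s(\cdot/t_j)|^2$ that makes the multiscale sum collapse. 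Here I would emphasize that the band-limitedness $\supp\hat\alpha\subseteq[0,\frac12]$ is what guarantees the quadrature-exactness used in that corollary, so no additional argument is needed beyond citing it.

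For (ii)$\Leftrightarrow$(iii) I would translate between the generators $\Psi$ and the filters $\eta$ via the refinement relations \cref{eq:refinement:eta}. Evaluating $\hat\alpha(2\xi)=\hat a(\xi)\hat\alpha(\xi)$ and $\hat\beta_s(2\xi)=\hat b_s(\xi)\hat\alpha(\xi)$ at $\xi=\lambda_{\ell,m}/t_{j+1}=\ell/(2t_j)$ (recalling $t_{j+1}=2t_j$) gives $\hat\alpha(\ell/t_j)=\hat a(\ell/(2t_j))\hat\alpha(\ell/(2t_j))$ and $\hat\beta_s(\ell/t_j)=\hat b_s(\ell/(2t_j))\hat\alpha(\ell/(2t_j))$. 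Substituting these into the telescoping identity in \cref{cd}, for indices $(\ell,m)\in\mathcal I_\alpha^{j+1}$ where $\hat\alpha(\ell/t_{j+1})\neq 0$, one may cancel the common factor $|\hat\alpha(\ell/(2t_j))|^2$ to obtain exactly \cref{cfc}; conversely, multiplying \cref{cfc} by $|\hat\alpha(\ell/t_{j+1})|^2$ and using the refinement relations recovers the generator identity, while for indices with $\hat\alpha(\ell/t_{j+1})=0$ the generator identity forces $\hat\alpha(\ell/t_j)=0$ and $\hat\beta_s(\ell/t_j)=0$, consistent with the restriction to $\mathcal I_\alpha^{j+1}$ in \cref{cfc}. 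The main subtlety — and the one point I would be careful about — is handling these degenerate indices where $\hat\alpha$ vanishes, so that the cancellation of $|\hat\alpha|^2$ is legitimate precisely on $\mathcal I_\alpha^j$ and the equivalence is not broken at the boundary of the support; apart from this bookkeeping, the argument is a routine substitution.
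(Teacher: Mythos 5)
Your overall strategy coincides with the paper's: reduce everything to \cite[Corollary~2.6]{wang2020tight} with $\lambda_{\ell,m}=\ell$, dilation factor $2$ (i.e.\ $t_{j+1}=2t_j$), and the spherical $t_j$-designs with weights $\frac{4\pi}{N_j}$ as the polynomial-exact quadrature rules. The gap is that you skip precisely the step the paper's proof is devoted to. You assert that ``products such as $\varphi_{j,k}\overline{\varphi_{j,k}}$ and $\psi_{j,k}^{(s)}\overline{\psi_{j,k}^{(s)}}$ lie in $\Pi_{t_j}$'' and conclude that ``no additional argument is needed beyond citing'' the corollary. But band-limitedness of $\hat\alpha$ only bounds the spherical-harmonic degree of each individual generator; to conclude that a \emph{product} of two such band-limited functions again has a spherical-harmonic expansion of degree at most the sum of the degrees --- so that a $t_j$-design integrates it exactly --- one needs the product rule $Y_\ell^m\,Y_{\ell'}^{m'}\in\Pi_{\ell+\ell'}$, i.e.\ that $Y_\ell^m Y_{\ell'}^{m'}$ is a linear combination of $\{Y_{\tilde\ell}^{\tilde m}\,:\,\tilde\ell\le\ell+\ell'\}$. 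The paper states explicitly that this is the \emph{only} hypothesis of \cite[Corollary~2.6]{wang2020tight} that still needs verification (and its remark notes that the Wigner $3j$ machinery does not give a transparent degree bound, nor is the product rule proved in \cite{wang2020tight}); the paper's entire proof then establishes \cref{eq27} by induction on $\ell+\ell'$ using the three-term recurrence of the associated Legendre polynomials. To close your argument you must supply such a verification --- either the paper's induction, or the shorter observation that each $Y_\ell^m$ is the restriction to $\sph^2$ of a polynomial of degree $\ell$, so the product is a polynomial of degree $\ell+\ell'$ restricted to $\sph^2$, combined with $\Pi_t=\spn\{Y_{\tilde\ell}^{\tilde m}\,:\,\tilde\ell\le t,\ |\tilde m|\le\tilde\ell\}$.

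A minor point: your paragraph deriving (ii)$\Leftrightarrow$(iii) from the refinement relations \cref{eq:refinement:eta} is correct in substance (including the care taken on the set $\mathcal{I}_\alpha^j$ where $\hat\alpha$ vanishes), but it is redundant in this framework, since that equivalence is already part of the cited corollary; the paper does not re-derive it.
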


\begin{proof}
We only need to show that the product rule holds for the spherical harmonics $Y_\ell^m$ since all other conditions in \cite[Corollary 2.6]{wang2020tight} hold.    We next use induction to prove that
the product of two spherical harmonics $Y_\ell^m$ and $Y_{\ell'}^{m'}$ is in $\Pi_{\ell+\ell'}$, that is, $Y_\ell^mY_{\ell'}^{m'}\in \Pi_{\ell+\ell'}$ and it can be written as the linear combination of $\{Y_{\tilde\ell}^{\tilde m}:\tilde{\ell}\leq\ell+\ell',\lvert\tilde{m}\rvert\leq\tilde{\ell}\}$. By the definition of $Y_{\ell}^m$ in \eqref{def:Ylm}, it sufficient to show that the product of two associate Legendre polynomials $P_\ell^m(z)$ and $P_{\ell'}^{m'}(z)$ can be written as the linear combination of $\{P_{\tilde\ell}^{\tilde m}(z):\tilde{\ell}\leq\ell+\ell',m=0,\ldots,\ell\}$ for $z\in[-1,1]$. That is
\begin{align}
\label{eq27}
P_{\ell}^m(z)P_{\ell'}^{m'}(z) = \sum_{ 0\le\tilde\ell\le \ell+\ell'}\sum_{0\le m\le \tilde \ell} c_{\tilde\ell}^m P_{\tilde \ell}^m(z),\quad z\in [-1,1].
\end{align}
We prove it by mathematical induction on $\ell+\ell'$. We omit $z$ in $P_\ell^m(z),P_{\ell'}^{m'}(z)$ and $P_{\tilde{\ell}}^{\tilde{m}}(z)$ for convenience.

For $\ell+\ell'=0$, the equation \eqref{eq27} trivially holds. Suppose \cref{eq27} holds for $\ell+\ell'=k\in\N_0$. We next prove for $\ell+\ell'=k+1$ the equation \eqref{eq27} holds. Without loss of generality, we can assume $\ell\ge2$ (otherwise, we can prove it for $\ell=0,1$ directly). By the recurrence formula of associated Legendre polynomial:
%\begin{align}
$(\ell-m+1)P_{\ell+1}^m(z)=(2\ell+1)zP_\ell^m(z)-(\ell+m)P_{\ell-1}^m(z)$,
%\end{align}
we have
\begin{align}
\label{eq28}
P_\ell^m P_{\ell'}^{m'}
%&
%=\left(\frac{2\ell-1}{\ell-m}zP_{\ell-1}^m-\frac{\ell-1+m}{\ell-m}P_{\ell-2}^m\right)P_{\ell'}^{m'}
%\\\notag &
=\frac{2\ell-1}{\ell-m}zP_{\ell-1}^m P_{\ell'}^{m'}-\frac{\ell-1+m}{\ell-m}P_{\ell-2}^m P_{\ell'}^{m'}.
\end{align}
From the inductive hypothesis and $P_1^0(z)=z$ for $z\in[-1,1]$, \cref{eq28} can be written as
\begin{align}
P_\ell^m P_{\ell'}^{m'}&=z\sum_{\tilde{\ell}\leq k}\sum_{\tilde{m}\leq\tilde{\ell}}c_{\tilde{\ell}}^{\tilde{m}}P_{\tilde{\ell}}^{\tilde{m}}+\sum_{\tilde{\ell}\leq k-1}\sum_{\tilde{m}\leq\tilde{\ell}}d_{\tilde{\ell}}^{\tilde{m}}P_{\tilde{\ell}}^{\tilde{m}}\\\notag
&=\sum_{\tilde{\ell}\leq k-1}\sum_{\tilde{m}\leq\tilde{\ell}}c_{\tilde{\ell}}^{\tilde{m}}P_1^0 P_{\tilde{\ell}}^{\tilde{m}}+\sum_{\tilde{m}\leq k}c_{k}^{\tilde{m}}P_1^0 P_{k}^{\tilde{m}}+\sum_{\tilde{\ell}\leq k-1}\sum_{\tilde{m}\leq\tilde{\ell}}d_{\tilde{\ell}}^{\tilde{m}}P_{\tilde{\ell}}^{\tilde{m}}\\\notag
&=\sum_{\tilde{\ell}\leq k}\sum_{\tilde{m}\leq\tilde{\ell}}e_{\tilde{\ell}}^{\tilde{m}}P_{\tilde{\ell}}^{\tilde{m}}+\sum_{\tilde{\ell}\leq k+1}\sum_{\tilde{m}\leq\tilde{\ell}}h_{\tilde{\ell}}^{\tilde{m}}P_{\tilde{\ell}}^{\tilde{m}}+\sum_{\tilde{\ell}\leq k-1}\sum_{\tilde{m}\leq\tilde{\ell}}d_{\tilde{\ell}}^{\tilde{m}}P_{\tilde{\ell}}^{\tilde{m}}\\\notag
&=\sum_{\tilde{\ell}\leq k+1}\sum_{\tilde{m}\leq\tilde{\ell}}a_{\tilde{\ell}}^{\tilde{m}}P_{\tilde{\ell}}^{\tilde{m}},
\end{align}
where we use $P_1^0P_\ell^m = \frac{1}{2\ell+1}[(\ell-m+1)P_{\ell+1}^m +(\ell+m)P_{\ell-1}^m]$ from the recurrence relation, and $a_{\tilde{\ell}}^{\tilde{m}},c_{\tilde{\ell}}^{\tilde{m}},d_{\tilde{\ell}}^{\tilde{m}},e_{\tilde{\ell}}^{\tilde{m}},h_{\tilde{\ell}}^{\tilde{m}}$ are coefficients in different components. That is, \cref{eq27} holds for $\ell+\ell'=k+1$.

Therefore, by mathematical induction, for every $\ell,\ell'\in\mathbb N_0$, \cref{eq27} holds for $z\in[-1,1]$. Hence, we complete the proof.
\end{proof}

\begin{rem}
By the contract rule and the Wigner $3j$-symbols \cite{varshalovich1988quantum}, the product rule holds but it is hard to tell what is the resulted degree of the product of two spherical harmonics. Besides, it is not explicitly proved in \cite{wang2020tight} for the product rule of spherical harmonics. Hence, we provide an elementary proof here.  By the product rule, given a spherical $t$-design  $X_N=\{\bm x_1,\ldots, \bm x_N\}$, we immediately have
\begin{align}
\label{eq:prdruleYlm}
\frac{4\pi}{N}\sum_{i=1}^N Y_{\ell}^m(\bm x_i)\overline{Y_{\ell'}^{m'}(\bm x_i)} = \langle Y_\ell^m, Y_{\ell'}^{m'}\rangle = \delta_{\ell\ell'}\delta_{mm'},
\end{align}
for all $\ell+\ell'\le t$, $|m|\le \ell, |m'|\le \ell'$. This implies $\frac{4\pi}{N}\bm Y_{\lfloor t/2\rfloor}^\star(X_N) \bm Y_{\lfloor t/2\rfloor }(X_N) = \Id_{(\lfloor t/2\rfloor +1)^2}$ with $\Id_{k}$ being the identity matrix of size $k$.
\end{rem}

\subsection{Truncated spherical framelet systems}
In practice, the infinite system $\mathcal{F}_{J_0}(\Psi,\mathcal{Q})$ in \cref{tightsfm} needs to be truncated at certain  scale $J_1\ge J_0$ and the filter bank $\eta = \{a;b_1,\ldots, b_n\}$ plays the key role in the decomposition and reconstruction of a discrete signal on the sphere. Here, we discuss the truncated systems of spherical framelets for practical spherical signal processing.

%For simplicity, we assume the initial scale $J_0=0$ and the finest scale is $J\ge0$.
We first  suppose that the filter bank
$\eta = \{a; b_1,\ldots, b_n\}$ is designed beforehand that satisfies the partition of unity condition:
\begin{align}
\label{PUC:eta}
|\hat a(\xi)|^2+\sum_{s\in[n]} |\hat b_s(\xi)|^2=1,\quad \xi\in\R.
\end{align}
For a fixed fine scale $J\in\Z$, we set
\begin{align}
\label{alphaJ1}
\hat\alpha^{(J+1)}(\frac{\lambda_{\ell,m}}{t_{J+1}})
=
\begin{cases}
 1  & \mbox{for } \ell\le t_J, \\
 0  & \mbox{for } \ell>t_J,
\end{cases}
\end{align}
and following \cref{eq:refinement:eta}, we  recursively define $\hat \alpha^{(j)},\hat \beta_s^{(j)}$ from $\hat\alpha^{(j+1)}$ by
\begin{align}
\label{eta2Psi1}
\hat \alpha^{(j)}(\frac{\lambda_{\ell,m}}{t_j})
=&\hat \alpha^{(j)}(2\frac{\lambda_{\ell,m}}{t_{j+1}})
=\hat a(\frac{\lambda_{\ell,m}}{t_{j+1}})\hat \alpha^{(j+1)}(\frac{\lambda_{\ell,m}}{t_{j+1}}), \\
\label{eta2Psi2}
\hat \beta_s^{(j)}(\frac{\lambda_{\ell,m}}{t_j})
=&\hat \beta_s^{(j)}(2\frac{\lambda_{\ell,m}}{t_{j+1}})
=\hat b_s(\frac{\lambda_{\ell,m}}{t_{j+1}})\hat \alpha^{(j+1)}(\frac{\lambda_{\ell,m}}{t_{j+1}}),\quad s\in[n],
\end{align}
for $j$ decreasing from $J$ to $J_0$.  Then, we obtain
\begin{align}
\label{Psi:new}
\Psi=\{\alpha^{(j)},\beta_s^{(j)}\setsep j=J_0,\ldots,J; s\in[n]\}.
\end{align}
Let
$\mathcal{Q}:=\mathcal{Q}_{J_0}^{J+1}:=\{Q_{N_j,t_j}:j=J_0,\ldots,J+1\}$ be the set of  polynomial-exact quadrature rules  truncated from original infinite sequence of spherical $t_j$-designs satisfying  $t_{j+1}=2t_j$.

With the above $\Psi$ and $\mathcal{Q}$, we can define the {\em truncated (semi-discrete) spherical framelet system} $\mathcal{F}_{J_0}^{J}(\eta,\mathcal{Q})$ as
\begin{align}
\label{def:sphTruncated}
\mathcal{F}_{J_0}^{J}(\eta,\mathcal Q):=\{\varphi_{J_0,k}\setsep k\in [N_{J_0}]\}\cup\{\psi_{j,k}^{(s)}\setsep k\in [N_{j+1}],s\in[n]\}_{j=J_0}^{J},
\end{align}
where the $\varphi_{j,k}$ and $\psi_{j,k}^{(s)}$ are modified as
\begin{align}
\label{eq24.1}
\varphi_{j,k}(\bm x)&:=\sqrt{w_{j,k}}\sum_{(\ell,m)\in\mathcal{I}_{t_j}} \hat\alpha^{(j)}(\frac{\lambda_{\ell,m}}{t_j})\overline{Y_{\ell}^{m}(\bm x_{j,k})}Y_{\ell}^{m}(\bm x),\\
\label{eq25.1}
\psi_{j,k}^{(s)}(\bm x)&:=\sqrt{w_{j+1,k}}\sum_{(\ell,m)\in\mathcal{I}_{t_{j+1}}} \hat{\beta}_{s}^{(j)}(\frac{\lambda_{\ell,m}}{t_j})\overline{Y_{\ell}^{m}(\bm x_{j+1,k})}Y_{\ell}^{m}(\bm x).
\end{align}
Note that  in the notation $\mathcal{F}_{J_0}^J(\eta,\mathcal{Q})$, we emphasize on the role of the filter bank $\eta$. The system $\mathcal{F}_{J_0}^J(\eta,\mathcal{Q})$ is completely determined by the filter bank $\eta$ and the quadrature rules $\mathcal{Q}$.
We have the following result regarding  the tightness of $\mathcal{F}_{J_0}^J(\eta,\mathcal{Q})$ and its relation to $\Pi_{t_J}$.

%Moreover, for a signal $f$ defined on $\sph^2$, we typically only obtain  its samples $(\bm x_i, f(\bm x_i))$, $i=1,\ldots,N$. To process  such a signal, we can assume $f\in\Pi_{t}$ for some $t$; otherwise, we can project $f$ onto $\Pi_t$ to obtain $f_0\in\Pi_t$ and define its residual $f_1:=f-f_0$. Then $f=f_0+f_1$ with $f_0\in \Pi_{t}$ and we can handle $f_0,f_1$ separately. We refer to \cref{subsec:proj} for the projection of $f$ onto $\Pi_t$.

 %Note that $\{\mathcal V_j\setsep j\ge J_0\}$ for a multiresolution analysis for $L^2(\sph^2)$.

\begin{thm}
\label{thm:fmtProj} Let $\mathcal{F}_{J_0}^{J}(\eta,\mathcal Q)$ be the truncated spherical framelet system defined as in \eqref{def:sphTruncated} and assume that the filter bank $\eta$ satisfies the partition of unity condition \eqref{PUC:eta} with $\supp \hat a \subseteq[0,\frac14]$ and $\supp \hat b_s\subseteq [0,\frac12]$ for $s\in[n]$. Define $\mathcal V_{j}:=\spn\{\varphi_{j,k}\setsep k \in [N_{j}]\}$ and $\mathcal W_j:=\spn\{\psi_{j,k}^{(s)}\setsep k\in[N_{j+1}],s\in[n]\}$. Then the following results hold.
\begin{enumerate}[{\rm(i)}]
\item $\Pi_{t_J} = \mathcal{V}_{J+1}$ and thus
%\begin{align*}
$f = \sum_{k=1}^{N_{J+1}} \langle f,\varphi_{J+1,k}\rangle\varphi_{J+1,k}$
%\end{align*}
for any $f\in \Pi_{t_{J}}$.

\item  The decomposition and reconstruction relation $\mathcal V_{j+1}=\mathcal V_j+\mathcal W_j$ holds for $j=J_0,\ldots,J$.
%If  $\hat\alpha(\frac{\lambda_{\ell,m}}{2^{j+1}})\equiv 1$ for all $\ell\le2^{j-1}$, then  $\Pi_{2^{j-1}}$ is a subspace of $\mathcal{V}_{j+1}$, that is, $\Pi_{2^{j-1}}\subseteq \mathcal V_{j+1}$.

\item  The truncated spherical framelet system $\mathcal{F}_{J_0}^{J}(\eta,\mathcal Q)$ is a tight frame for $\Pi_{t_J}$. That is, for all $f\in\Pi_{t_J}$, we have $f = \sum_{k=1}^{N_{0}}\mathpzc v_{J_0,k}\varphi_{J_0,k}+\sum_{j=J_0}^J\sum_{k=1}^{N_{j}}\sum_{s=1}^n \mathpzc w_{j,k}^{(s)}\psi_{j,k}^{(s)}$, where $\mathpzc v_{j,k}:=\langle f,\varphi_{j,k}\rangle \mbox { and } \mathpzc w_{j,k}^{(s)}:=\langle f,\psi_{j,k}^{(s)}\rangle.
$
%   \begin{align}
%\label{eq29:coeffs:vw}
%\mathpzc v_{j,k}&:=\langle f,\varphi_{j,k}\rangle \mbox { and } \mathpzc w_{j,k}^{(s)}:=\langle f,\psi_{j,k}^{(s)}\rangle.
%\end{align}
\end{enumerate}
\end{thm}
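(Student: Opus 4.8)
The plan is to push everything back to three facts: orthonormality $\langle Y_\ell^m,Y_{\ell'}^{m'}\rangle=\delta_{\ell\ell'}\delta_{mm'}$, the product rule for spherical designs \eqref{eq:prdruleYlm} (so that $\frac{4\pi}{N}\sum_i Y_\ell^m(\bm x_i)\overline{Y_{\ell'}^{m'}(\bm x_i)}=\delta_{\ell\ell'}\delta_{mm'}$ as soon as $\ell+\ell'$ is at most the design degree, equivalently $\frac{4\pi}{N}\bm Y_{\lfloor t/2\rfloor}^\star(X_N)\bm Y_{\lfloor t/2\rfloor}(X_N)=\Id$), and the partition of unity \eqref{PUC:eta}. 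First I would record the support bookkeeping that is used everywhere: since $\lambda_{\ell,m}=\ell$, $t_{j+1}=2t_j$, $\supp\hat a\subseteq[0,\tfrac14]$ and $\supp\hat b_s\subseteq[0,\tfrac12]$, a downward induction on $j$ through \eqref{eta2Psi1}--\eqref{eta2Psi2} gives $\hat\alpha^{(j)}(\ell/t_j)=0$ for $\ell>t_{j-1}$ and $\hat\beta_s^{(j)}(\ell/t_j)=0$ for $\ell>t_j$; hence $\varphi_{j,k}\in\Pi_{t_{j-1}}$ and $\psi_{j,k}^{(s)}\in\Pi_{t_j}$, and in each computation below the two spherical-harmonic degrees that occur add to at most the design degree of the point set involved ($t_j$ for $X_{N_j}$, $t_{j+1}=2t_j$ for $X_{N_{j+1}}$), so the product rule applies. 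I would also record, for $f=\sum_{(\ell,m)\in\mathcal I_{t_J}}\hat f_\ell^m Y_\ell^m\in\Pi_{t_J}$, the coefficient formulas $\langle f,\varphi_{j,k}\rangle=\sqrt{w_{j,k}}\sum_{(\ell,m)}\overline{\hat\alpha^{(j)}(\ell/t_j)}\,Y_\ell^m(\bm x_{j,k})\hat f_\ell^m$ and $\langle f,\psi_{j,k}^{(s)}\rangle=\sqrt{w_{j+1,k}}\sum_{(\ell,m)}\overline{\hat\beta_s^{(j)}(\ell/t_j)}\,Y_\ell^m(\bm x_{j+1,k})\hat f_\ell^m$, which come straight from orthonormality.

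For \emph{(i)}, by \eqref{alphaJ1} each $\varphi_{J+1,k}=\sqrt{w_{J+1,k}}\sum_{(\ell,m)\in\mathcal I_{t_J}}\overline{Y_\ell^m(\bm x_{J+1,k})}Y_\ell^m$, so $\mathcal V_{J+1}\subseteq\Pi_{t_J}$, and the coefficient vector of $\varphi_{J+1,k}$ is the $k$-th column of $\sqrt{w_{J+1,k}}\,\bm Y_{t_J}^\star(X_{N_{J+1}})$. Since $X_{N_{J+1}}$ is a $2t_J$-design, the matrix identity in the Remark after Theorem~\ref{tightsfm} shows $\bm Y_{t_J}^\star(X_{N_{J+1}})$ has full rank $(t_J+1)^2$, hence its columns span $\C^{(t_J+1)^2}$ and $\mathcal V_{J+1}=\Pi_{t_J}$. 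For the reconstruction formula I would substitute the coefficient formula into $\sum_k\langle f,\varphi_{J+1,k}\rangle\varphi_{J+1,k}$, swap the sums, and use the product rule on $X_{N_{J+1}}$ (legitimate since both degrees are $\le t_J$ and $2t_J=t_{J+1}$), obtaining $\sum_k\langle f,\varphi_{J+1,k}\rangle\varphi_{J+1,k}=\sum_{(\ell,m)\in\mathcal I_{t_J}}|\hat\alpha^{(J+1)}(\ell/t_{J+1})|^2\hat f_\ell^m Y_\ell^m=f$, using $\hat\alpha^{(J+1)}(\ell/t_{J+1})=1$ for $\ell\le t_J$.

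For \emph{(ii)} I would reuse the coefficient-vector picture: $\{\varphi_{j,k}\}_k$ has coefficient vectors equal to the columns of $\sqrt{w_{j,k}}\,\bm D^{(j)}\bm Y_{t_{j-1}}^\star(X_{N_j})$, with $\bm D^{(j)}:=\diag(\hat\alpha^{(j)}(\ell/t_j))_{(\ell,m)\in\mathcal I_{t_{j-1}}}$, and $X_{N_j}$ being a $t_j$-design makes $\bm Y_{t_{j-1}}^\star(X_{N_j})$ surjective onto $\C^{(t_{j-1}+1)^2}$; hence $\mathcal V_j=\spn\{Y_\ell^m:\hat\alpha^{(j)}(\ell/t_j)\ne0\}$, and likewise (using that $X_{N_{j+1}}$ is a $2t_j$-design) $\mathcal W_j=\spn\{Y_\ell^m:\hat\beta_s^{(j)}(\ell/t_j)\ne0\text{ for some }s\in[n]\}$. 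Then \eqref{eta2Psi1}--\eqref{eta2Psi2} give: if $\hat\alpha^{(j+1)}(\ell/t_{j+1})=0$ then $\hat\alpha^{(j)}(\ell/t_j)=\hat\beta_s^{(j)}(\ell/t_j)=0$ for all $s$; and if $\hat\alpha^{(j+1)}(\ell/t_{j+1})\ne0$, then \eqref{PUC:eta} forces one of $\hat a(\ell/t_{j+1}),\hat b_1(\ell/t_{j+1}),\dots,\hat b_n(\ell/t_{j+1})$ to be nonzero, hence one of $\hat\alpha^{(j)}(\ell/t_j),\hat\beta_s^{(j)}(\ell/t_j)$ nonzero. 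So the spectral index set of $\mathcal V_{j+1}$ is the union of those of $\mathcal V_j$ and $\mathcal W_j$, i.e.\ $\mathcal V_{j+1}=\mathcal V_j+\mathcal W_j$ for $j=J_0,\dots,J$.

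For \emph{(iii)} I would square the two coefficient formulas, sum over $k$ (and $s$), and apply the product rule on $X_{N_{J_0}}$, resp.\ $X_{N_{j+1}}$, to get $\sum_k|\langle f,\varphi_{J_0,k}\rangle|^2=\sum_{(\ell,m)}|\hat\alpha^{(J_0)}(\ell/t_{J_0})|^2|\hat f_\ell^m|^2$ and $\sum_{k,s}|\langle f,\psi_{j,k}^{(s)}\rangle|^2=\sum_{(\ell,m)}\bigl(\sum_s|\hat\beta_s^{(j)}(\ell/t_j)|^2\bigr)|\hat f_\ell^m|^2$. The key step is a telescoping identity: \eqref{eta2Psi1}--\eqref{eta2Psi2} together with \eqref{PUC:eta} written as $\sum_s|\hat b_s(\xi)|^2=1-|\hat a(\xi)|^2$ give $\sum_s|\hat\beta_s^{(j)}(\ell/t_j)|^2=|\hat\alpha^{(j+1)}(\ell/t_{j+1})|^2-|\hat\alpha^{(j)}(\ell/t_j)|^2$, whence
\begin{equation*}
|\hat\alpha^{(J_0)}(\ell/t_{J_0})|^2+\sum_{j=J_0}^{J}\sum_{s=1}^{n}|\hat\beta_s^{(j)}(\ell/t_j)|^2 = |\hat\alpha^{(J+1)}(\ell/t_{J+1})|^2 = 1 \qquad(\ell\le t_J).
\end{equation*}
Adding the squared-coefficient identities over all scales then yields $\sum_k|\langle f,\varphi_{J_0,k}\rangle|^2+\sum_{j=J_0}^J\sum_{k,s}|\langle f,\psi_{j,k}^{(s)}\rangle|^2=\sum_{(\ell,m)\in\mathcal I_{t_J}}|\hat f_\ell^m|^2=\|f\|^2$, i.e.\ $\mathcal F_{J_0}^{J}(\eta,\mathcal Q)$ is a Parseval (tight) frame for $\Pi_{t_J}$, and the stated reconstruction formula follows by polarization (or, equivalently, by running the same computation without absolute values). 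The hard part here is conceptual rather than computational: the tempting route of iterating a single ``one-level'' Parseval/reconstruction step from scale $J$ down to $J_0$ via (ii) does \emph{not} work, because the intermediate refinable symbols $\hat\alpha^{(j)}$ are not $\{0,1\}$-valued and no single scale preserves the $L^2$-norm; one must expand over all scales at once and let the telescoping $\sum_s|\hat\beta_s^{(j)}|^2=|\hat\alpha^{(j+1)}|^2-|\hat\alpha^{(j)}|^2$ collapse to $1$ on $\{\ell\le t_J\}$. Everything else is the support bookkeeping that keeps each invocation of the product rule legal.
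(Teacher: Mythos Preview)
Your proof is correct, but it takes a different route from the paper in parts (ii) and (iii), and your closing commentary about the ``tempting route'' mischaracterizes the paper's actual strategy.

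For (ii), the paper does not stop at the set equality $\mathcal V_{j+1}=\mathcal V_j+\mathcal W_j$; it proves the stronger one-level \emph{reconstruction identity}
\[
\sum_{k}\mathpzc v_{j+1,k}\varphi_{j+1,k}=\sum_{k}\mathpzc v_{j,k}\varphi_{j,k}+\sum_{k,s}\mathpzc w_{j,k}^{(s)}\psi_{j,k}^{(s)}
\]
by the same product-rule computation, showing that both sides equal $\sum_{(\ell,m)}\hat f_\ell^m|\hat\alpha^{(j+1)}(\ell/t_{j+1})|^2Y_\ell^m$ after expanding $|\hat\alpha^{(j+1)}|^2(|\hat a|^2+\sum_s|\hat b_s|^2)$. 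Item (iii) is then literally immediate by iterating this identity from $j=J$ down to $J_0$, starting from the reconstruction in (i). So your assertion that ``iterating a single one-level Parseval/reconstruction step \dots\ does not work, because the intermediate refinable symbols $\hat\alpha^{(j)}$ are not $\{0,1\}$-valued'' is wrong for the reconstruction identity: it telescopes perfectly regardless of what $|\hat\alpha^{(j)}|$ looks like. (It is true that a naive one-level \emph{Parseval} identity of the form $\sum_k|\mathpzc v_{j+1,k}|^2=\sum_k|\mathpzc v_{j,k}|^2+\sum_{k,s}|\mathpzc w_{j,k}^{(s)}|^2$ need not hold, but the paper never claims or uses that.)

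Your alternative --- spectral characterizations $\mathcal V_j=\spn\{Y_\ell^m:\hat\alpha^{(j)}(\ell/t_j)\ne 0\}$, $\mathcal W_j=\spn\{Y_\ell^m:\exists s,\ \hat\beta_s^{(j)}(\ell/t_j)\ne 0\}$ for (ii), then a global Parseval computation with the telescoping $\sum_s|\hat\beta_s^{(j)}|^2=|\hat\alpha^{(j+1)}|^2-|\hat\alpha^{(j)}|^2$ for (iii) --- is perfectly valid. It has the mild advantage of making the spectral content of each subspace explicit; the paper's approach has the advantage that the one-level identity is exactly what drives the fast transforms in Theorem~\ref{thm:dec:rec}, so it gives (iii) for free without a separate Parseval/polarization step.
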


\begin{proof}
By \cref{alphaJ1}, we have $\mathcal{V}_{J+1}\subseteq \Pi_{t_J}$.  One the other hand, for $f\in\Pi_{t_J}$,   we have
\begin{align*}
\label{asmeq}
f &=\sum_{(\ell,m)\in\mathcal{I}_{t_{J+1}}}\hat f_\ell^m Y_\ell^m=\sum_{(\ell,m)\in\mathcal{I}_{t_{J+1}}}\hat f_\ell^m\lvert\hat\alpha^{(J+1)}(\frac{\lambda_{\ell,m}}{t_{J+1}})\rvert^2 Y_\ell^m.
\end{align*}
 We next show that the last equation above implies $f=\sum_{k=1}^{N_{J+1}}\mathpzc v_{J+1,k}\varphi_{J+1,k}\in \mathcal{V}_{J+1}$. In fact, more generally, by the orthogonality of $Y_\ell^m$, for any $f\in L^2(\sph^2)$, we have $\mathpzc v_{j,k}=\langle f,\varphi_{j,k}\rangle=\sum_{(\ell,m)\in\mathcal{I}_{t_j}} \hat f_\ell^m{\bar{\hat\alpha}^{(j)}}(\frac{\lambda_{\ell,m}}{t_j})\sqrt{w_{j,k}}Y_{\ell}^{m}(\bm x_{j,k})$. Together with that ${Q}_{N_j,t_j}$ is a polynomial-exact quadrature rule of degree $t_j$ and $\hat\alpha^{(j)}(\frac{\lambda_{\ell,m}}{t_{j}})\equiv 0 $ for $\ell>t_{j-1}$ in view of \cref{alphaJ1} and $\supp\hat a\subseteq[0,\frac12]$, we can deduce that
\begin{align*}
\sum_{k=1}^{N_j}\mathpzc v_{j,k}\varphi_{j,k}&=
\sum_{(\ell,m)\in\mathcal{I}_{t_j}}
\sum_{(\ell',m')\in\mathcal{I}_{t_j}}
\hat f_\ell^m
{\bar{\hat\alpha}^{(j)}}(\frac{\lambda_{\ell,m}}{t_j})
{{\hat\alpha}^{(j)}}(\frac{\lambda_{\ell',m'}}{t_j}) \mathcal{U}_{\ell,m}^{\ell',m'}(Q_{N_j,t_j})
Y_{\ell'}^{m'}
\\&=
\sum_{(\ell,m)\in\mathcal{I}_{t_j}}\hat f_\ell^m\lvert\hat\alpha^{(j)}(\frac{\lambda_{\ell,m}}{t_j})\rvert^2 Y_\ell^m,
\end{align*}
where we use
$\mathcal{U}_{\ell,m}^{\ell',m'}(Q_{N_j,t_j}):=\sum_{k=1}^{N_j}{w_{j,k}}Y_{\ell}^{m}(\bm x_{j,k}) \overline{Y_{\ell'}^{m'}(\bm x_{j,k})}=\delta_{\ell\ell'}\delta_{m m'}$. Item (i) is proved.

For Item (ii), by definition, we obviously have $\mathcal{V}_j+\mathcal{W}_j\subseteq \mathcal{V}_{j+1}$. For the other direction,  similarly to above, for any $f\in L^2(\sph^2)$,  we can deduce that $\sum_{k=1}^{N_{j+1}}\mathpzc w_{j,k}^{(s)}\psi_{j,k}^{(s)}=
\sum_{(\ell,m)\in\mathcal{I}_{t_{j+1}}}\hat f_\ell^m\lvert\hat\beta_s^{(j)}(\frac{\lambda_{\ell,m}}{t_j})\rvert^2 Y_\ell^m$ . Then, by \cref{eta2Psi1,eta2Psi2,PUC:eta}, we have
\begin{align*}
%\label{eq33}
\sum_{k=1}^{N_{j+1}}\mathpzc v_{j+1,k}\varphi_{j+1,k}&=\sum_{(\ell,m)\in\mathcal{I}_{t_{j+1}}}\hat f_\ell^m\lvert\hat\alpha(\frac{\lambda_{\ell,m}}{t_{j+1}})\rvert^2 Y_\ell^m\\\notag
&=\sum_{(\ell,m)\in\mathcal{I}_{t_{j+1}}}\hat f_\ell^m\lvert\hat\alpha(\frac{\lambda_{\ell,m}}{t_{j+1}})\rvert^2\left(\lvert\hat a(\frac{\lambda_{\ell,m}}{t_{j+1}})\rvert^2+\sum_{s=1}^n\lvert\hat b_s(\frac{\lambda_{\ell,m}}{t_{j+1}})\rvert^2\right)Y_\ell^m\\\notag
&=\sum_{(\ell,m)\in\mathcal{I}_{t_{j}}}\hat f_\ell^m\lvert\hat\alpha(\frac{\lambda_{\ell,m}}{t_{j}})\rvert^2 Y_\ell^m+\sum_{s=1}^n\sum_{(\ell,m)\in\mathcal{I}_{t_{j+1}}}\hat f_\ell^m\lvert\hat\beta_s(\frac{\lambda_{\ell,m}}{t_{j}})\rvert^2 Y_\ell^m\\
%\notag
%&=\sum_{k=1}^{N_j}\langle f,\varphi_{j,k}\rangle\varphi_{j,k}+\sum_{k=1}^{N_{j+1}}\sum_{s=1}^{n}\langle f,\psi_{j,k}^{(s)}\rangle\psi_{j,k}^{(s)}\\
\notag
&=\sum_{k=1}^{N_j}\mathpzc v_{j,k}\varphi_{j,k}+\sum_{k=1}^{N_{j+1}}\sum_{s=1}^{n}\mathpzc w_{j,k}^{(s)}\psi_{j,k}^{(s)}.
\end{align*}
Therefore, we have $\mathcal{V}_{j+1}\subseteq \mathcal{V}_j+\mathcal{W}_j$ for all $j=0,\ldots,J$. Item (ii) holds.

Item (iii) directly follows from items (i) and (ii). This completes the proof.
\end{proof}

%\begin{align}
%\label{eq29a}
%\mathpzc v_{j,k}&=\sum_{\ell=0}^{2^j}\sum_{m=-\ell}^\ell \hat f_\ell^m\bar{\hat a}(\frac{\lambda_{\ell,m}}{2^{j+1}})\sqrt{w_{j,k}}Y_{\ell}^{m}(\bm x_{j,k})
%,\\\label{eq30b}
%\mathpzc w_{j,k}^{(s)}&
%=\sum_{\ell=0}^{2^{j+1}}\sum_{m=-\ell}^\ell \hat f_\ell^m\bar{\hat b}_s(\frac{\lambda_{\ell,m}}{2^{j+1}})\sqrt{w_{j+1,k}}Y_{\ell}^{m}(\bm x_k^{j+1}).
%\end{align}

\subsection{Fast spherical framelet transforms}
\label{subsec:fSFmT}

We next turn to the fast spherical framelet transforms (SFmTs) for the decomposition and reconstruction of a signal on the sphere $\sph^2$ using the truncated system $\mathcal{F}_{J_0}^J(\eta,\mathcal{Q})$.

For a vector $\bm {\hat c}=(\hat c_\ell^m)_{(\ell,m)\in\mathcal{I}_{t_{j+1}}}$, define the downsampling operator $\downarrow_{j+1}$ by $\bm {\hat c} \downarrow_{j} := (\hat c_{\ell}^m)_{(\ell,m)\in\mathcal{I}_{t_j}}$. Similarly, for a vector $\bm {\hat c}=(\hat c_\ell^m)_{(\ell,m)\in\mathcal{I}_{t_{j}}}$,  define the upsampling operator $\uparrow_{j+1}$ by $\bm {\hat c} \uparrow_{j+1}: = (\hat c_{\ell}^m)_{(\ell,m)\in\mathcal{I}_{t_{j+1}}}$ with $\hat c_\ell^m=0$ for $\ell>t_{j}$.  The symbol $\odot$ denotes the Hadamard entry-wise product operator.

We have the following theorem regarding the decomposition of reconstruction using the truncated spherical framelet system $\mathcal{F}_{J_0}^J(\eta,\mathcal{Q})$.
\begin{thm}
\label{thm:dec:rec}
Given a truncated system $\mathcal{F}_{J_0}^J(\eta,\mathcal{Q})$ as in \cref{thm:fmtProj}. Define
\begin{align}\label{def:ceoff:v:w}
\bm{v}_{j}&:=(\mathpzc v_{j,k})_{k\in[N_j]}\in \C^{N_{j}},&
\bm{w}_{j}^{(s)}&:=(\mathpzc w_{j,k})_{k\in[N_{j+1}]}\in \C^{N_{j+1}},\\
%\end{align}
%%to be the approximation coefficient vector and detailed coefficient vector at scale $j$, respectively.
%%From the filter bank $\eta=\{a; b_1,\ldots, b_s\}$ and $\Psi$,  define
%and
%\begin{align}
\label{def:eta:a:b}
\bm {\hat a}_{j}&:=(\hat a(\frac{\lambda_\ell^m}{t_{j+1}}))_{(\ell,m)\in\mathcal I_{t_{j+1}}},&
\bm {\hat b}^{(s)}_{j}&:=(\hat b_s(\frac{\lambda_\ell^m}{t_{j+1}}))_{(\ell,m)\in\mathcal I_{t_{j+1}}},
\end{align}
for $j=J_0,\ldots,J$.  Let $w_j:=\frac{4\pi}{N_j}$. Then, for $j=J_0,\ldots, J$,  we have the one-level framelet decomposition that obtains $\{\bm v_j,\bm w_j^{(s)}\setsep s\in[n]\}$ from $\bm v_{j+1}$:
\begin{align}
\label{fmt:dec}
\bm v_j&=
{\sqrt{w_j}}\bm Y_{t_j}\left[[(\sqrt{w_{j+1}}\bm Y_{t_{j+1}}^\star\bm {v}_{j+1})\odot \bm {\bar{\hat a}}_{j}]\downarrow_{j}\right],\\
\bm w_j^{(s)}&=
{\sqrt{w_{j+1}}}\bm Y_{t_{j+1}}\left[(\sqrt{w_{j+1}}\bm Y_{t_{j+1}}^\star\bm {v}_{j+1})\odot \bm {\bar{\hat b}}^{(s)}_{j}\right], \quad s\in[n],
\end{align}
and the one-level framelet reconstruction of $\bm v_{j+1}$ from  $\{\bm v_j,\bm w_j^{(s)}\setsep s\in[n]\}$:
\begin{equation}
\begin{small}
\begin{aligned}
\label{fmt:rec}
\bm v_{j+1}&=
\sqrt{{w_{j+1}}}\bm Y_{t_{j+1}}\left[[\sqrt{{w_{j}}}\bm Y_{t_j}^\star\bm {v}_{j}]\uparrow_{j+1}\odot \bm {\hat a}_{j}+\sum_{s=1}^n[
(\sqrt{{w_{j+1}}}\bm Y_{t_{j+1}}^\star\bm w_j^{(s)})\odot \bm {\hat b}^{(s)}_{j}]\right].
\end{aligned}
\end{small}
\end{equation}
\end{thm}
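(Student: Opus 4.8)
The plan is to derive the decomposition and reconstruction formulas directly from the definitions of the framelet coefficients and the spherical harmonic expansions established in the proof of \cref{thm:fmtProj}, so that everything reduces to the polynomial-exactness identity $\mathcal{U}_{\ell,m}^{\ell',m'}(Q_{N_j,t_j})=\delta_{\ell\ell'}\delta_{mm'}$ together with bookkeeping of degrees. I would begin by recalling from the proof of \cref{thm:fmtProj} that, for any $f\in L^2(\sph^2)$ with spherical harmonic coefficients $\hat f_\ell^m$, one has the explicit formulas
\begin{align*}
\mathpzc v_{j,k}=\langle f,\varphi_{j,k}\rangle=\sqrt{w_j}\sum_{(\ell,m)\in\mathcal I_{t_j}}\hat f_\ell^m\,\overline{\hat\alpha^{(j)}(\tfrac{\lambda_{\ell,m}}{t_j})}\,Y_\ell^m(\bm x_{j,k}),\quad
\mathpzc w_{j,k}^{(s)}=\sqrt{w_{j+1}}\sum_{(\ell,m)\in\mathcal I_{t_{j+1}}}\hat f_\ell^m\,\overline{\hat\beta_s^{(j)}(\tfrac{\lambda_{\ell,m}}{t_j})}\,Y_\ell^m(\bm x_{j+1,k}).
\end{align*}
Reading these through the matrices $\bm Y_{t_j}$, $\bm Y_{t_{j+1}}$ and their adjoints, I would observe that if we write $\bm {\hat f}_{j}:=(\hat f_\ell^m)_{(\ell,m)\in\mathcal I_{t_j}}$ for the vector of coefficients of the polynomial $\sum_k \mathpzc v_{j+1,k}\varphi_{j+1,k}\in\mathcal V_{j+1}\subseteq\Pi_{t_J}$, then $\bm Y_{t_{j+1}}^\star\bm v_{j+1}$ recovers $\bm{\hat f}_{j+1}\odot\bm{\bar{\hat\alpha}}^{(j+1)}$ up to the weight factors, because of the quadrature identity applied to the product $Y_\ell^m\overline{Y_{\ell'}^{m'}}$ (which lies in $\Pi_{t_{j+1}}$ by the product rule proved in \cref{tightsfm}, and the degrees match since $\hat\alpha^{(j+1)}$ is supported on $\ell\le t_J\le t_{j+1}/?$—here I would be careful with the support conditions $\supp\hat a\subseteq[0,\tfrac14]$, $\supp\hat b_s\subseteq[0,\tfrac12]$ which guarantee that all products involved stay within the exactness degree of the relevant quadrature rule).

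The core of the argument then is: substitute the recursion \cref{eta2Psi1,eta2Psi2} for $\hat\alpha^{(j)}$, $\hat\beta_s^{(j)}$ in terms of $\hat a$, $\hat b_s$ and $\hat\alpha^{(j+1)}$; recognize that multiplying a coefficient vector indexed by $\mathcal I_{t_{j+1}}$ by $\bm{\bar{\hat a}}_j$ and restricting to $\mathcal I_{t_j}$ (the operator $\downarrow_j$) is exactly what produces the level-$j$ refinable-function coefficients, since $\hat a$ is supported on $[0,\tfrac14]$ so that $\hat a(\lambda_{\ell,m}/t_{j+1})=0$ whenever $\ell>t_{j+1}/4\le t_j/2\le t_j$, making the truncation to $\mathcal I_{t_j}$ lossless; and similarly for the high-pass parts. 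For the reconstruction formula \cref{fmt:rec}, I would run the computation of Item (ii) in the proof of \cref{thm:fmtProj} in matrix form: expand $\sqrt{w_j}\bm Y_{t_j}^\star \bm v_j$ and $\sqrt{w_{j+1}}\bm Y_{t_{j+1}}^\star\bm w_j^{(s)}$ into coefficient vectors, apply $\uparrow_{j+1}$ to the former, multiply by $\bm{\hat a}_j$ and $\bm{\hat b}_j^{(s)}$ respectively, sum, and use the partition of unity \cref{PUC:eta} together with $|\hat\alpha^{(j+1)}|\in\{0,1\}$ to collapse the sum back to $\bm{\hat f}_{j+1}\odot|\bm{\hat\alpha}^{(j+1)}|^2$, which is precisely $\bm Y_{t_{j+1}}^\star\bm v_{j+1}/\sqrt{w_{j+1}}$; applying $\sqrt{w_{j+1}}\bm Y_{t_{j+1}}$ then returns $\bm v_{j+1}$ because $\sum_k\mathpzc v_{j+1,k}\varphi_{j+1,k}=\sum_{(\ell,m)}\hat f_\ell^m|\hat\alpha^{(j+1)}(\lambda_{\ell,m}/t_{j+1})|^2 Y_\ell^m$ was already shown to hold in $\mathcal V_{j+1}$, and sampling this polynomial on $X_{N_{j+1}}$ gives back $\bm v_{j+1}$ exactly.

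I expect the main obstacle to be the careful tracking of \emph{which quadrature rule is polynomial-exact for which product} at each application of $\bm Y_{t}^\star$: every time one writes $\bm Y_{t_j}^\star\bm u$ or $\bm Y_{t_{j+1}}^\star\bm u$ one is implicitly invoking $\sum_k w_{j,k}Y_\ell^m(\bm x_{j,k})\overline{Y_{\ell'}^{m'}(\bm x_{j,k})}=\delta_{\ell\ell'}\delta_{mm'}$, and this requires $\ell+\ell'\le t_j$ (resp.\ $t_{j+1}$), so one must verify that the support constraints $\supp\hat a\subseteq[0,\tfrac14]$ and $\supp\hat b_s\subseteq[0,\tfrac12]$, combined with $t_{j+1}=2t_j$, keep all degrees appearing in products below the exactness threshold; the $\tfrac14$ versus $\tfrac12$ distinction is exactly what makes the downsampling $\downarrow_j$ in the low-pass branch consistent while no downsampling is needed in the high-pass branch. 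A secondary, purely notational nuisance is reconciling the index $\lambda_\ell^m$ used in \cref{def:eta:a:b} with $\lambda_{\ell,m}$ elsewhere (both equal $\ell$ in this paper), and ensuring the $\sqrt{w_j}$, $\sqrt{w_{j+1}}$ prefactors are distributed exactly as in \cref{eq24.1,eq25.1}. Once these are pinned down, the three displayed identities are immediate consequences of substituting the coefficient formulas and applying orthogonality, so I would present the proof as: (1) coefficient-vector reformulation of $\bm v_j$, $\bm w_j^{(s)}$; (2) the decomposition identities via $\downarrow_j$ and the support of $\hat a$; (3) the reconstruction identity via $\uparrow_{j+1}$ and \cref{PUC:eta}, citing Item (ii) of \cref{thm:fmtProj} for the underlying function-space identity.
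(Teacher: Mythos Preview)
Your proposal is correct and follows essentially the same route as the paper's proof: both express $\bm v_j$ and $\bm w_j^{(s)}$ via the spherical harmonic expansion of the framelet coefficients, invoke the polynomial-exactness identity $\mathcal U_{\ell,m}^{\ell',m'}(Q_{N_j,t_j})=\delta_{\ell\ell'}\delta_{mm'}$ (with the support conditions on $\hat a,\hat b_s$ ensuring the degrees stay within range) to invert $\bm Y_{t_{j+1}}^\star\bm v_{j+1}$ back to the coefficient vector, substitute the refinement recursion \cref{eta2Psi1,eta2Psi2}, and then use the partition of unity \cref{PUC:eta} for the reconstruction. The only cosmetic difference is that the paper packages the intermediate identity as $\bm v_{j+1}=\sqrt{w_{j+1}}\bm Y_{t_{j+1}}(\bm{\hat f}_{j+1}\odot\bm{\bar{\hat\alpha}}_{j+1})$ and works from there, whereas you phrase it as recovering $\bm{\hat f}_{j+1}\odot\bm{\bar{\hat\alpha}}^{(j+1)}$ from $\bm Y_{t_{j+1}}^\star\bm v_{j+1}$; these are the same step.
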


\begin{proof}
Given $f\in \Pi_{t_{J}}$, by Item (i) of \cref{thm:fmtProj}, it is uniquely determined by its Fourier  coefficient sequence $\hat f_\ell^m$, i.e., $f=\sum_{(\ell,m)\in \mathcal{I}_{t_J}} \hat f_\ell^m Y_\ell^m$, and we can represent it in $\mathcal V_{J+1}$ as $f = \sum_{k=1}^{N_{J+1}} \mathpzc v_{J+1,k}\varphi_{J+1,k}$, which is associated with the spherical $t$-design point set $X_{N_{J+1}}$. Define  $\bm {\hat f}_{j}:=(\hat f_\ell^m)_{(\ell,m)\in\mathcal{I}_{t_j}}$ and $\bm {\hat \alpha}_{j}:=(\hat \alpha^{(j)}(\lambda_\ell^m/t_{j}))_{(\ell,m)\in\mathcal I_{t_{j}}}$  for  $j=J_0,\ldots, J+1$ with the convention that  $\hat f_\ell^m=0$ for $(\ell,m)\notin\mathcal{I}_{t_J}$.

By \cref{eta2Psi1,eta2Psi2}, we  have $\mathpzc v_{j,k}=\sum_{(\ell,m)\in\mathcal{I}_{t_{j}}} \hat f_\ell^m{\bar{\hat\alpha}^{(j)}}(\frac{\lambda_{\ell,m}}{t_j})\sqrt{w_{j,k}}Y_{\ell}^{m}(\bm x_{j,k})
=\sum_{(\ell,m)\in\mathcal{I}_{t_{j}}}\hat f_\ell^m \bar {\hat{a}}(\frac{\lambda_{\ell,m}}{t_{j+1}})\bar{\hat\alpha}^{(j)}(\frac{\lambda_{\ell,m}}{t_{j+1}})\sqrt{w_{j,k}}Y_{\ell}^{m}(\bm x_{j,k})$.
%\begin{align*}
%%\label{eq29}
%\mathpzc v_{j,k}&=\sum_{(\ell,m)\in\mathcal{I}_{t_{j}}} \hat f_\ell^m{\bar{\hat\alpha}^{(j)}}(\frac{\lambda_{\ell,m}}{t_j})\sqrt{w_{j,k}}Y_{\ell}^{m}(\bm x_{j,k})
%\\&
%=\sum_{(\ell,m)\in\mathcal{I}_{t_{j}}}\hat f_\ell^m \bar {\hat{a}}(\frac{\lambda_{\ell,m}}{t_{j+1}})\bar{\hat\alpha}^{(j)}(\frac{\lambda_{\ell,m}}{t_{j+1}})\sqrt{w_{j,k}}Y_{\ell}^{m}(\bm x_{j,k})
%%,\\\label{eq30}
%%\\
%%\mathpzc w_{j,k}^{(s)}&=\sum_{(\ell,m)\in\mathcal{I}_{t_{j+1}}} \hat f_\ell^m{\bar{\hat\beta}}_s^{(j)}(\frac{\lambda_{\ell,m}}{t_j})\sqrt{w_{j+1,k}}Y_{\ell}^{m}(\bm x_{j+1,k})\\
%%&=\sum_{(\ell,m)\in\mathcal{I}_{t_{j+1}}} \hat f_\ell^m \bar {\hat{b}}_s(\frac{\lambda_{\ell,m}}{t_{j+1}})\bar{\hat\alpha}^{(j)}(\frac{\lambda_{\ell,m}}{t_{j+1}})\sqrt{w_{j,k}}Y_{\ell}^{m}(\bm x_{j,k}).
%\end{align*}
This implies that
\begin{align}
\label{eq:vjvj1}
\bm v_{j+1} = \sqrt{{w_{j+1}}}\bm Y_{t_{j+1}}(\bm {\hat f}_{j+1}\odot \bm {\bar{\hat{ \alpha}}}_{j+1}),\,
\bm v_{j} = \sqrt{{w_{j}}}\bm Y_{t_{j}}\left[[(\bm {\hat f}_{j+1}\odot \bm {\hat \alpha}_{j+1})\odot \bm {\bar{\hat a}}_j)]\downarrow_j\right],
\end{align}
where we use $w_{j,k}\equiv\frac{\pi}{N_j}=:w_j$.
Note that, by  $\hat\alpha^{(j)}(\frac{\lambda_{\ell,m}}{t_{j+1}})\equiv 0 $ for $\ell>t_{j}$ and the polynomial-exact quadrature rule $Q_{N_{j+1}}$ of degree $t_{j+1}$, we have
\[
[\sqrt{{w_{j+1}}}\bm Y_{t_{j+1}}^\star \bm v_{j+1}]|_{\mathcal{I}_{t_j}}=[{w_{j+1}}\bm Y_{t_{j+1}}^\star \bm Y_{t_{j+1}}(\bm {\hat f}_{j+1}\odot \bm {\bar{\hat{ \alpha}}}_{j+1})]|_{\mathcal{I}_{t_j}} = (\bm {\hat f}_{j+1}\odot \bm {\bar{\hat{\alpha}}}_{j+1})|_{\mathcal{I}_{t_j}},
\]
where $|_{\mathcal{I}_{t_j}}$ denotes the restriction on the index set $\mathcal{I}_{t_j}$.
Consequently, replacing the above expression of $(\bm {\hat f}_{j+1}\odot \bm {\bar{\hat{ \alpha}}}_{j+1})$ into $\bm v_{j}$ in \cref{eq:vjvj1}, we have \eqref{fmt:dec}.
%$
%\bm v_j = \sqrt{{w_{j}}}\bm Y_{t_{j}}\left[[(\sqrt{{w_{j+1}}}\bm Y_{t_{j+1}}^\star \bm v_{j+1})\odot \bm {\bar{\hat{ a}}}_j]\downarrow_j\right].
%$
Similarly, we have
$
\bm w_j^{(s)} = \sqrt{{w_{j+1}}}\bm Y_{t_{j+1}}[(\sqrt{{w_{j+1}}}\bm Y_{t_{j+1}}^\star \bm v_{j+1})\odot \bm {\bar{\hat{ b}}}_j^{(s)}].
$
Hence, we obtain the one-level framelet decomposition.

For the reconstruction, by \cref{fmt:dec} and $\supp\hat a\subseteq [0,\frac14]$, we have $[\sqrt{{w_{j}}}\bm Y_{t_j}^\star\bm {v}_{j}]\uparrow_{j+1}  \odot \bm {\hat a}_{j}
= ([\sqrt{{w_{j+1}}}\bm Y_{t_{j+1}}^\star \bm v_{j+1}]\odot \bm {\bar{\hat a}}_j) \odot \bm {{\hat a}}_j
=[\sqrt{{w_{j+1}}}\bm Y_{t_{j+1}}^\star \bm v_{j+1}] \odot [\bm {\bar{\hat a}}_j\odot \bm {\hat a}_j]$.
%\begin{align*}
%[\sqrt{{w_{j}}}\bm Y_{t_j}^\star\bm {v}_{j}]\uparrow_{j+1}  \odot \bm {\hat a}_{j}
%= ([\sqrt{{w_{j+1}}}\bm Y_{t_{j+1}}^\star \bm v_{j+1}]\odot \bm {\bar{\hat a}}_j) \odot \bm {{\hat a}}_j
%=[\sqrt{{w_{j+1}}}\bm Y_{t_{j+1}}^\star \bm v_{j+1}] \odot [\bm {\bar{\hat a}}_j\odot \bm {\hat a}_j]
%\end{align*}
Similarly,
$
(\sqrt{{w_{j+1}}}\bm Y_{t_{j+1}}^\star\bm w_j^{(s)})\odot \bm {\hat b}^{(s)}_{j}=
[\sqrt{{w_{j+1}}}\bm Y_{t_{j+1}}^\star \bm v_{j+1}] \odot [\bm {\bar{\hat b}}_j^{(s)}\odot \bm {\hat b}_j^{(s)}].
$
Consequently, by the partition of unity condition in \eqref{PUC:eta} and the support constrains of $\hat a, \hat b_s$ ($\supp\hat a\subset[0,\frac14], \supp \hat b_s\subset[0,\frac12]$), we have $[\sqrt{{w_{j}}}\bm Y_{t_j}^\star\bm {v}_{j}]\uparrow_{j+1}\odot \bm {\hat a}_{j}+\sum_{s=1}^n[
(\sqrt{{w_{j+1}}}\bm Y_{t_{j+1}}^\star\bm w_j^{(s)})\odot \bm {\hat b}^{(s)}_{j}]\\
=[\sqrt{{w_{j+1}}}\bm Y_{t_{j+1}}^\star \bm v_{j+1}]\odot ([\bm {\bar{\hat a}}_j\odot \bm {\hat a}_j]+\sum_{s=1}^n[\bm {\bar{\hat b}}_j^{(s)}\odot \bm {\hat b}_j^{(s)}])
=\bm {\hat f}_{j+1}\odot \bm {\hat\alpha}_{j+1}$.
Now \cref{fmt:rec} follows from   \cref{eq:vjvj1}, which completes the proof.
\end{proof}

\medskip

Based on \cref{thm:dec:rec}, we can have the pseudo code of  multi-level spherical framelet transforms as in \cref{alg:SFmT:dec,alg:SFmT:rec}. Since each step in the decomposition or reconstruction involves only the fast spherical harmonic transforms or the down- and up-sampling operators, the computational time complexity of the  multi-level spherical framelet transforms is of order $\mathcal{O}(t^2\log^2(t)+N\log^2(\frac1\epsilon))$.
\begin{algorithm}[htpb!]
  \caption{Multi-level Spherical Framelet Transforms: Decomposition}
  \label{alg:SFmT:dec}
  \begin{algorithmic}[1]
    \REQUIRE
      {$\{Q_{N_j,t_j}=(X_{N_j},w_j=\frac{4\pi}{N_j})\}_{j=J_0}^{J+1}$: polynomial-exact quadrature rules;
      $\bm f_{J+1}=f|_{X_{N_{J+1}}}$:  samples of $f\in\Pi_{t_{J}}$ on the spherical point set $X_{N_{J+1}}$;
      $\eta$: filter bank.

      Initialize $\bm {\hat f_{J+1}}=w_{j+1}\bm Y_{t_{J+1}}^\star \bm f_{J+1}$. }
    \FOR{$j$ from $J$ to $J_0$}
      \FOR{$s$ from $1$ to $n$}
      \STATE $\bm w_{j}^{(s)}= \sqrt{w_{j+1}}\bm Y_{t_{j+1}}[\bm {\hat f}_{j+1} \odot \bm {\bar{\hat b}}_j^{(s)} ]$.
      \ENDFOR
      \STATE $\bm {\hat f}_j=[\bm {\hat f}_{j+1}\odot \bm {\bar{\hat a}}_j]{\downarrow_j}$.
    \ENDFOR
    \STATE $\bm v_{J_0}=\sqrt{w_{J_0}}\bm Y_{t_{J_0}}\bm {\hat f}_{J_0}$.
    \ENSURE
      {$\{\bm v_{J_0},\bm w_j^{(s)}\setsep j=J_0,\ldots J; s\in[n]\}$.}
  \end{algorithmic}
\end{algorithm}

\begin{algorithm}[htpb!]
  \caption{Multi-level Spherical Framelet Transforms: Reconstruction}
  \label{alg:SFmT:rec}
  \begin{algorithmic}[1]
    \REQUIRE
      {$\{Q_{N_j,t_j}=(X_{N_j},w_j=\frac{4\pi}{N_j})\}_{j=J_0}^{J+1}$: polynomial-exact quadrature rules;
     $\{\bm v_{J_0},\bm w_j^{(s)}\setsep j=J_0,\ldots J;s\in[n]\}$: coefficient sequences;
          $\eta$: filter bank.

      Initialize $\bm {\hat f_{J_0}}=\sqrt{w_{J_0}}\bm Y_{t_{J_0}} \bm v_{J_0}$. }
    \FOR{$j$ from $J_0$ to $J$}
      \STATE $\bm {\hat f}_{j+1}=\bm {\hat f}_{j}{\uparrow_{j+1}}\odot \bm {\bar{\hat a}}_j$
      \FOR{$s$ from $1$ to $n$}
      \STATE $\bm {\hat f}_{j+1}=\bm {\hat f}_{j+1}+ [\sqrt{w_{j+1}}\bm Y_{t_{j+1}}^\star\bm {w}_{j}^{(s)}] \odot \bm {{\hat b}}_j^{(s)} $.
      \ENDFOR
    \ENDFOR
    \STATE $\bm { f_{J+1}}=w_{j+1}\bm Y_{t_{J+1}} \bm {\hat f}_{J+1}$.
    \ENSURE
      $\bm f_{J+1}$:  samples of $f\in\Pi_{t_{J}}$ on the spherical point set $X_{N_{J+1}}$;
  \end{algorithmic}
\end{algorithm}

The procedure of spherical framelet decomposition and reconstruction  is illustrated as in \cref{fig:multi-level-FMT}.

\begin{figure}[htpb!]
\begin{minipage}{\textwidth}
	\centering
\begin{minipage}{\textwidth}
\begin{center}
\begin{tikzpicture}[thick,scale=0.66, every node/.style={scale=0.67}, nonterminal/.style={rectangle, minimum size=5.5mm, very thick, draw=red!50!black!50,top color=white, bottom color=red!50!black!20,font=\itshape},
terminal/.style={rectangle,minimum size=6mm,rounded corners=1mm,very thick,draw=black!50,top color=white,bottom color=black!20,font=\ttfamily},
sum/.style={circle,minimum size=1mm,very thick,draw=black!50,top color=white,bottom color=black!20,font=\ttfamily},
%point/.style={circle,inner sep=0pt,minimum size=2pt,fill=black!20},
skip loop/.style={to path={-- ++(0,#1) -| (\tikztotarget)}},
hv path/.style={to path={-| (\tikztotarget)}},
vh path/.style={to path={|- (\tikztotarget)}},
,>=stealth',thick,black!50,text=black,
every new ->/.style={shorten >=1pt},
graphs/every graph/.style={edges=rounded corners}]
\matrix[row sep=1mm, column sep=3.8mm] {
% R1
& & & & & \node (ma2) [terminal] {$a^\star$}; & \node (dsa2) [terminal] {$\downarrow \hspace{-0.5mm}$}; & \node (pra2) [nonterminal] {processing};
 & \node (usa2) [terminal] {$\uparrow$}; & \node (mas2) [terminal] {$a$}; & & & & &\\
% R2
& & \node (ma1) [terminal] {$a^\star$}; & \node (dsa1) [terminal] {$\downarrow \hspace{-0.5mm}$}; & \node (p2) [coordinate] {}; & & & & & &
\node (plus2) [sum] {$+$}; & \node (usa1) [terminal] {$\uparrow\hspace{-0.5mm} $}; & \node (mas1) [terminal] {$a$}; & & \\
% R3
& & & & & \node (mb2) [terminal] {$b^\star$}; &  & \node (prb2) [nonterminal] {processing};
 &  & \node (mbs2) [terminal] {$b$}; & & & & &\\
% R4
\node (in) [nonterminal] {input}; & \node (p1) [coordinate] {}; &&&&& &&&&& && \node (plus1) [sum] {$+$}; & \node (out) [nonterminal] {output};\\
% R5
& & \node (mb1) [terminal] {$b^\star$}; &  && && \node (prb1) [nonterminal] {processing}; && &&  & \node (mbs1) [terminal] {$b$}; & & \\
};

\graph [use existing nodes] {
% R1
ma2 -> dsa2 -> pra2 -> usa2 -> mas2;
% R2
ma1 -> dsa1 -> p2 -> [vh path] {ma2,mb2}; {mas2,mbs2} -> [hv path] plus2 -> usa1 -> mas1;
% R3
mb2 -> prb2 -> mbs2;
% R4
in -- p1 -> [vh path] {ma1, mb1}; {mas1,mbs1} -> [hv path] plus1 -> out;
% R5
mb1 -> prb1 -> mbs1;
};
\end{tikzpicture}
\end{center}\vspace{-1mm}
\end{minipage}
\begin{minipage}{\textwidth}
\caption{Two-level framelet filter bank decomposition and reconstruction based on the filter bank $\eta=\{a;b_1,\ldots,b_n\}$. Here the node with respect to $b$ (or $b^\star$)  runs from $b_1$ to $b_n$ while the node with respect to $\oplus$ sums all $b_s,s\in[n]$.}
\label{fig:multi-level-FMT}
\end{minipage}
\end{minipage}
\end{figure}

\section{Spherical framelets for spherical signal denoising}
\label{sec:experiments}
%In this section, we conduct several numerical experiments,  including generating spherical-$t$ design point sets, approximation of Wendland functions, and denoising of spherical signals.

In this section, we provide several numerical experiments for illustrating the efficiency and effectiveness of spherical signal denoising using the spherical framelet systems developed in \cref{sec:frmain}.

\subsection{Three framelet systems}
We first discuss the ingredients for the system $\mathcal{F}_{J_0}^J(\eta,\mathcal{Q})$. For $\mathcal{Q}=\{Q_{N_j,t_j}\}_{j=J_0}^{J+1} = \{(X_{N_j}, w_j=\frac{4\pi}{N_j})\}_{j=J_0}^{J+1}$ is the set of spherical designs obtained in \cref{sec:spdApp} and satisfying $t_{j+1}=2t_j$. For $\eta$, we  construct three different filter banks $\eta_1$, $\eta_2$ and $\eta_3$ with $1$, $2$ and $3$ high-pass filters respectively.
\begin{enumerate}
\item[(1)] The filter bank $\eta_1=\{a;b_1\}$ is determined by $\hat a:=\chi_{[-\frac{3}{16},\frac18]; \frac{1}{16},\frac{1}{16}}$ and $\hat b_1:=\chi_{[\frac18,\frac{9}{16}]; \frac{1}{16},\frac{1}{16}}$.  Note that $\supp \hat a \subset[0,\frac14]$.
\item[(2)] The filter bank $\eta_2=\{a;b_1,b_2\}$ is determined by the same $\hat a$ as in Item (1), and $\hat b_1:=\chi_{[\frac18,\frac38]; \frac{1}{16},\frac18}$ and $\hat b_2:=\chi_{[\frac38,1]; \frac18,\frac18}$.
\item[(3)] The filter bank $\eta_3=\{a;b_1,b_2,b_3\}$ is determined by the same $\hat a$ as in Item (1), and
$\hat b_1:=\chi_{[\frac18,\frac{5}{16}]; \frac{1}{16},\frac{1}{16}}$, $\hat b_2:=\chi_{[\frac{5}{16},\frac{7}{16}]; \frac{1}{16},\frac{1}{16}}$, and $\hat b_3:=\chi_{[\frac{7}{16},\frac{9}{16}]; \frac{1}{16},\frac{1}{16}}$.
\end{enumerate}
Here the bump function $\chi_{[c_L,c_R];\epsilon_L,\epsilon_R}$ is the continuous function supported on $[c_L-\epsilon_L,c_R+\epsilon_R]$ as defined in \cite{han2016directional,wang2020tight} given by
\begin{equation*}
\chi_{[c_L,c_R];\epsilon_L,\epsilon_R}(\xi):=
\begin{cases}
0,&\xi\leq c_L-\epsilon_L\,\text{or}\,\xi\geq c_R+\epsilon_R,\\
\sin(\frac{\pi}{2}\nu(\frac{\xi-c_L+\epsilon_L}{2\epsilon_L})),&c_L-\epsilon_L<\xi< c_L+\epsilon_L,\\
1,&c_L+\epsilon_L\leq\xi\leq c_R-\epsilon_R,\\
\cos(\frac{\pi}{2}\nu(\frac{\xi-c_R+\epsilon_R}{2\epsilon_R})),&c_R-\epsilon_R<\xi<c_R+\epsilon_R,
\end{cases}
\end{equation*}
where $c_L,c_R$ are control points, $\epsilon_L,\epsilon_R$ are shape parameters,  $\nu(t)$ is the elementary function \cite{daubechies1992ten} such that $\nu(t)=t^4(35-84t+70t^2-20t^3)$ for $0<t<1$, $\nu(t)=1$ for $t\ge1$, and $\nu(t)=0$ for $t<0$. Note that  $\nu(t)$ satisfies $\nu(t)+\nu(1-t)=1$. Each filter bank $\eta_k$ corresponds to a truncated tight framelet system $\mathcal{F}^J_{J_0}(\eta_k,\mathcal Q)$ on the sphere. We show in \cref{Fig.filters} the filter banks $\eta_k$ for $k=1,2,3$. It can be verified that $\lvert\hat a(\xi)\rvert^2+\sum_{s=1}^{n}\lvert\hat b_s(\xi)\rvert^2=1$ for $\xi\in[0,\frac{1}{2}]$, which implies \cref{PUC:eta}.
%Therefore by \cref{tigthm}, we know the associated framelets are tight frames. Thus the associated framelet generators $\Phi=\{\alpha;\beta_1,\ldots,\beta_s,s=1,\ldots,k\}$ for $k=1,2,3$ can be determined by \cref{eq21,cd} or through
%\begin{equation*}
%\hat\alpha(\xi)=\prod_{i=1}^\infty\hat a(\frac{\xi}{2^i}),\quad
%\hat\beta_s(\xi)=\prod_{i=1}^\infty\hat b_s(\frac{\xi}{2^i}),\quad s=1,\dots,k.
%\end{equation*}

\begin{figure}[hptb!]
\centering
\subfigure[$\eta_1=\{\hat a;\hat b_1\}$]{
\includegraphics[width=0.24\textwidth]{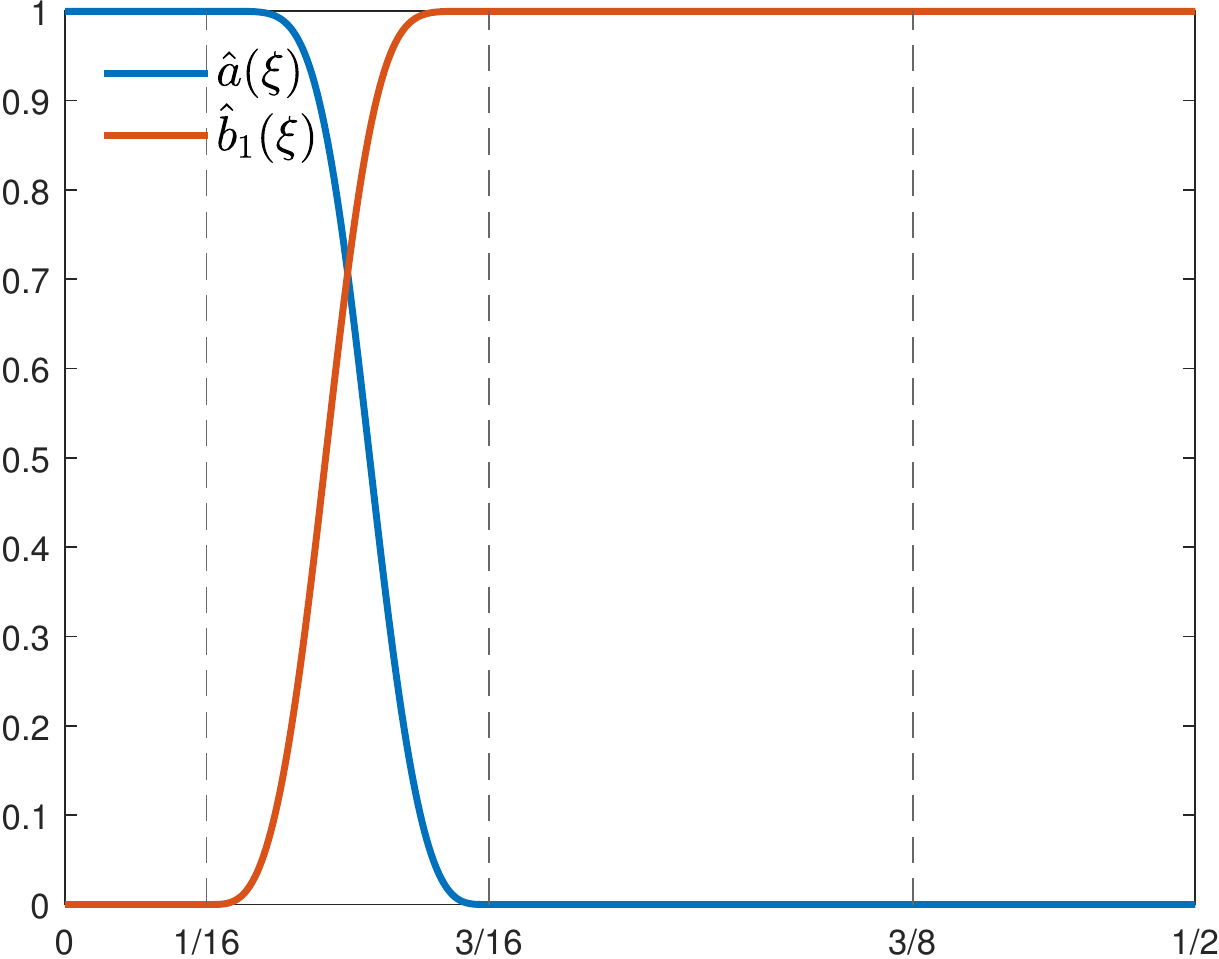}
}
\subfigure[$\eta_2=\{\hat a;\hat b_1,\hat b_2\}$]{
\includegraphics[width=0.24\textwidth]{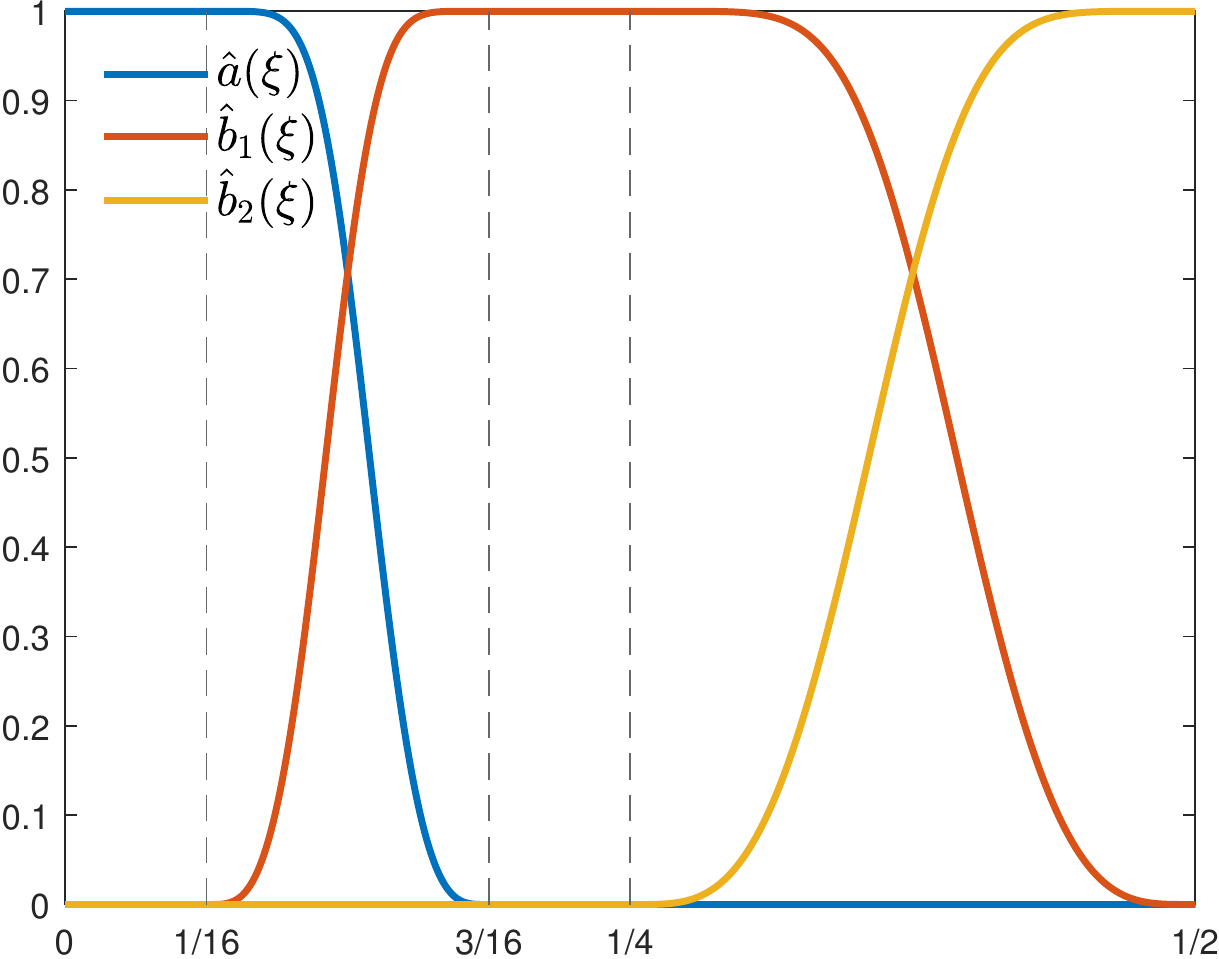}
}
\subfigure[$\eta_3=\{\hat a;\hat b_1,\hat b_2,\hat b_3\}$]{
\includegraphics[width=0.24\textwidth]{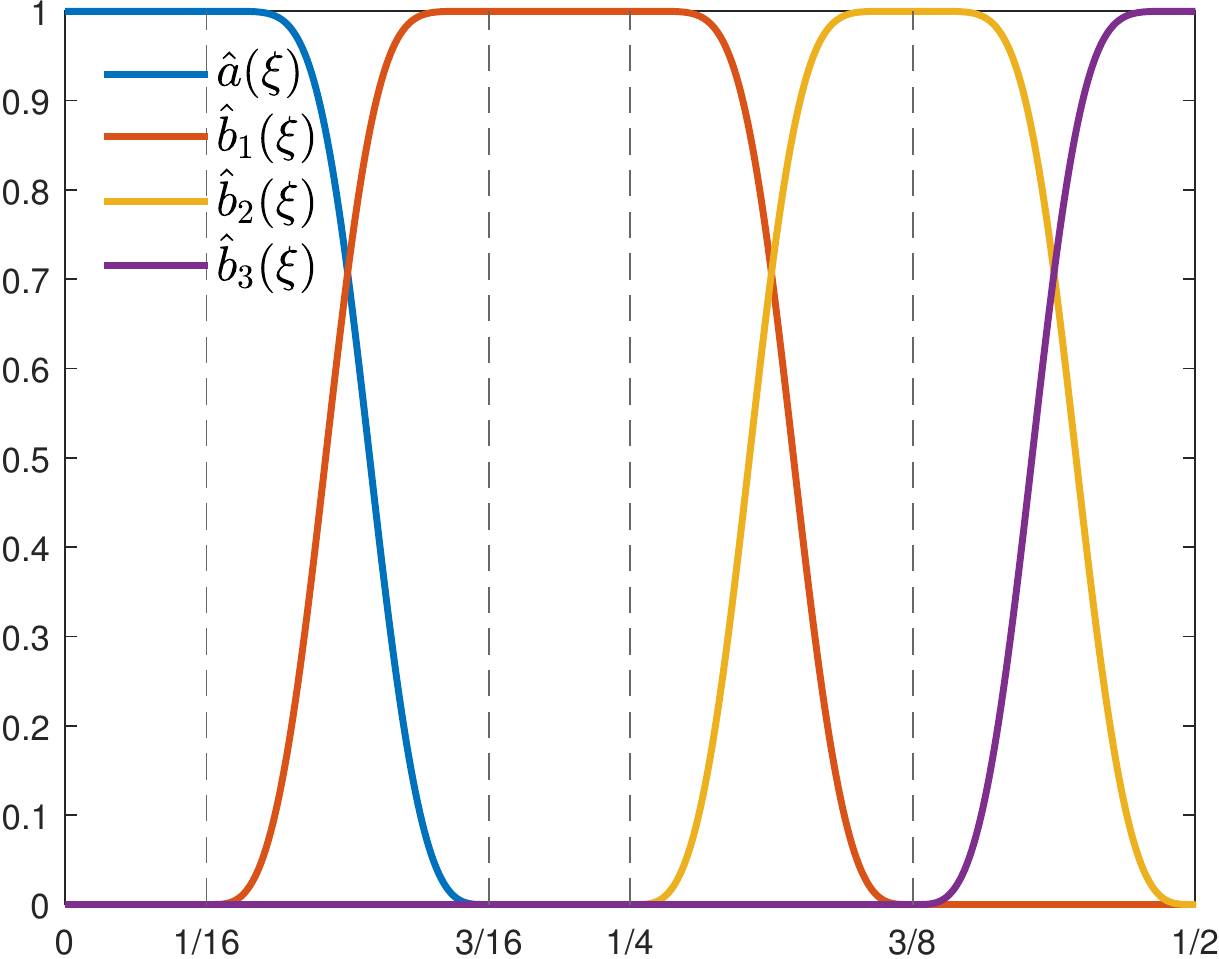}
}
\caption{\rm Filter banks $\eta_1,\eta_2,\eta_3$ on $[0,\frac{1}{2}]$}
\label{Fig.filters}
\end{figure}

%\subsection{Denoising}
\subsection{Denoising procedure}
\label{subsec:fmtDen}
We next discuss the denoising procedure for a given noisy signal $f_\sigma$ using the spherical framelet systems.
Given a noisy function $f_\sigma=f_o+G_\sigma$ on  $X_{N_{J+1}}$, where $f_o$ is an unknown underground truth and $G_\sigma$ is the Gaussian white noisy, we project it onto  $\Pi_{t_J}$ (using \cref{alg:projCG}) to  obtain  $f_\sigma=f+g$ such that  $f\in\Pi_{t_J}$ is the projection part  and $g=f_\sigma -f$ is the residual part. Note that all $f_\sigma, f, g$ are sampled on $X_{N_{J+1}}$. We then  use the spherical tight framelet system $\mathcal{F}_{J_0}^{J}(\eta,\mathcal Q)$  to decompose  $f$ (more precisely, $\bm f_{J+1}=f|_{X_{N_{J+1}}}$, see \cref{alg:SFmT:dec}) into the framelet coefficient sequences $\{\bm v_{J_0}\}\cup\{\bm w_j^{(s)}\setsep j=J_0,\ldots, J;s\in[n]\}$.  We apply the thresholding techniques for denoising the framelet coefficient sequences $\bm w_j^{(s)}$ of $f$ and the residual $g$. After that, we apply the framelet reconstruction (\cref{alg:SFmT:rec}) to the denoised  framelet coefficient sequences and obtain the denoised reconstruction  signal $f_{thr}$ (cf. \cref{fig:multi-level-FMT}).  Finally, together with the denoised residual $g_{thr}$, we can obtain a denoised signal $f_{\sigma,thr}=f_{thr}+g_{thr}$. To quantify the performance of the framelet denoising, we use the signal-to-noise ratio (SNR) and peak signal-to-noise ratio (PSNR) to measure the quality of denoising.

We next detail the denoising procedure of $f$ and $g$ for obtaining $f_{thr}$ and $g_{thr}$.

Given the framelet coefficient sequence $\bm w_j^{(s)} = (\mathpzc w_{j,k}^{(s)})_{k\in[N_{j+1}]}$,
note that ${\mathpzc  w}_{j,k}^{(s)}$ is associated with the point $\bm x_{j+1,k}$. We first normalize it according to the norm $\lVert \psi_{j,k}^{(s)}\rVert_{L_2(\mathbb S^2)}$  by $\tilde{\mathpzc w}_{j,k}^{(s)}={\mathpzc w_{j,k}^{(s)}}/{\lVert \psi_{j,k}^{(s)}\rVert_{L_2(\mathbb S^2)}}$.
In practice, such a norm $\lVert \psi_{j,k}^{(s)}\rVert_{L_2(\mathbb S^2)}$ can be computed by setting all coefficient sequences in $\{\bm v_{J_0}\}\cup\{\bm w_j^{(s)}\setsep j=J_0,\ldots, J;s\in[n]\}$ to be $0$ except $\mathpzc w_{j,k}^{(s)} = 1$,  applying the framelet reconstruction \cref{alg:SFmT:rec}  obtaining a reconstruction signal with respect to $\psi_{j,k}^{(s)}$, and calculating its $l_2$-norm to obtain $\lVert \psi_{j,k}^{(s)}\rVert_{L_2(\mathbb S^2)}$. We then perform the local-soft (LS) thresholding method which updates $\mathpzc {\tilde w}_{j,k}^{(s)}$ to be
\begin{align}
\label{localsoft:w}
\check{\mathpzc w}_{j,k}^{(s)}&=
\begin{cases}
\tilde{\mathpzc w}_{j,k}^{(s)}-\sgn(\tilde{\mathpzc w}_{j,k}^{(s)})\tau_{j,k,r}^{(s)},&\lvert\tilde{\mathpzc w}_{j,k}^{(s)}\rvert\geq \tau_{j,k,r}^{(s)},\\
0,&\lvert\tilde{\mathpzc w}_{j,k}^{(s)}\rvert< \tau_{j,k,r}^{(s)},
\end{cases}
\end{align}
where $\tau_{j,k,r}^{(s)}$ is a thresholding value determined by
\begin{align}
\label{eq38}
\tau_{j,k,r}^{(s)}&=\frac{c\cdot \sigma^2}{\sqrt{(\bar{\mathpzc w}_{j,k,r}^{(s)}-\sigma^2)_+}}
\end{align}
with $c$ being a constant that is tuned by hand to optimize the performance. Here, $\bar{\mathpzc w}_{j,k,r}^{(s)}$ is the average of the coefficients near $\tilde{\mathpzc  w}_{j,k}^{(s)}$ determined by a spherical cap $C(\bm x,r):=\{\bm y\in\mathbb S^2:\lVert\bm x\times\bm y\rVert\leq r\}$ of radius $r$ and centered at $\bm x =\bm x_{j+1,k}$. More precisely, we can obtain the neighborhood  $\mathcal{N}_{j+1,k,r}$ of $\bm x_{j+1,k}$ in $C(\bm x_{j+1,k},r)$ as $\mathcal{N}_{j+1,k,r}:=X_{N_{j+1}}\cap C(\bm x_{j+1,k},r)$. Then, $\bar{\mathpzc w}_{j,k,r}^{(s)}=\frac{1}{|\mathcal{N}_{j+1,k,r}|}\sum_{i:\bm x_i\in \mathcal{N}_{j+1,k,r}} \lvert\tilde{\mathpzc w}_{j,i}^{(s)}\rvert^2$,
%\begin{align}
%\label{eq36}
%\bar{\mathpzc w}_{j,k,r}^{(s)}&=\frac{1}{|\mathcal{N}_{j+1,k,r}|}\sum_{i:\bm x_i\in \mathcal{N}_{j+1,k,r}} \lvert\tilde{\mathpzc w}_{j,i}^{(s)}\rvert^2,
%\end{align}
where ${|\mathcal{N}_{j+1,k,r}|}$ denotes the cardinality of the set $\mathcal{N}_{j+1,k,r}$.
After the thresholding procedure, we denormalize $\check{\mathpzc w}_{j,k}^{(s)}$ to obtain the updated  coefficient ${\mathpzc w}_{j,k}^{(s)}=\check{\mathpzc w}_{j,k}^{(s)}\cdot\|\psi_{j,k}^{(s)}\|_{L^2(\sph^2)}$. Finally,  framelet reconstruction is applied to the updated coefficient sequences.

Similarly,
the local-soft thresholding method  for $g$ is
\begin{align}
\label{localsoft:g}
g_{thr}(\bm x_{J+1,k})&=
\begin{cases}
g(\bm x_{J+1,k})-\sgn(g(\bm x_{J+1,k}))\tau_{J+1,k},&\lvert g(\bm x_{J+1,k})\rvert\geq \tau_{J+1,k,r},\\
0,&\lvert g(\bm x_{J+1,k})\rvert< \tau_{J+1,k,r}.
\end{cases}
\end{align}
where  $\tau_{J+1,k,r}=\frac{c_1\cdot \sigma^2}{\sqrt{(\bar g(\bm x_{J+1,k})-\sigma^2)_+}}$ with
$
\bar g(\bm x_{J+1,k})=\frac{1}{|\mathcal{N}_{J+1,k,r}|}\sum\limits_{i:\bm x_i\in\mathcal{N}_{J+1,k,r}} \lvert g(\bm x_{j+1,i})\rvert^2.
$
Then, we obtain $g_{thr}$ after the local-soft thresholding.

In practice, the neighborhood $\mathcal{N}_{j+1,k,r}$ of $\bm x_{j+1,k}$ in $X_{N_{j+1}}$ can be found through the nearest neighborhood search algorithm (rnn-search). During our numerical experiments, we choose different radius $r$ for $\mathcal{N}_{j+1,k,r}$ according to $r_i=\frac{\rho_i}{(t_{j+1}+1)^2}$, where $\rho_i$ is a constant for the $i_{th}$ spherical cap layer, which roughly gives points near the center within the layer defined by the boundary $\partial C(\bm x,r_i)$ of $C(\bm x,r_i)$. After running some test, we set $\rho_i=13.84\cdot i$. With the above definition, we can pre-compute the set $\mathcal{N}_{r_i}(X_N)=\{\mathcal{N}_{r_i}(\bm x_k):=\{\bm x\in C(\bm x_k,r)\cap X_N\}\setsep k\in[N]\}$ for some fixed $i\in\mathbb{N}$ and for a given point set $X_N$ to speed up the local-soft thresholding process.  In \cref{Fig.cap}, we  shows an example of a spherical cap boundary $\partial C(\bm x,r_i)$ for $i=15,22,27$ which centroids are $\bm x=\bm x_1=(0,0,1)^\top$ and $\bm x =\bm x_{500}=(0.3018,-0.5854,0.7525)^\top$ respectively from a SPD spherical $64$-design point set, see \cref{table1}.

%For local thresholding methods required the construction of spherical caps on the related point sets, since it  consumes a lot of computation for generating the index set $\mathcal I_{r}(X_N)$ ($\mathcal I_{k}(X_N)$) for certain spherical cap radius $r$ (certain number $k$)
\begin{figure}[htpb!]
\centering
\subfigure[Partial view]{
\includegraphics[width=0.2\textwidth]{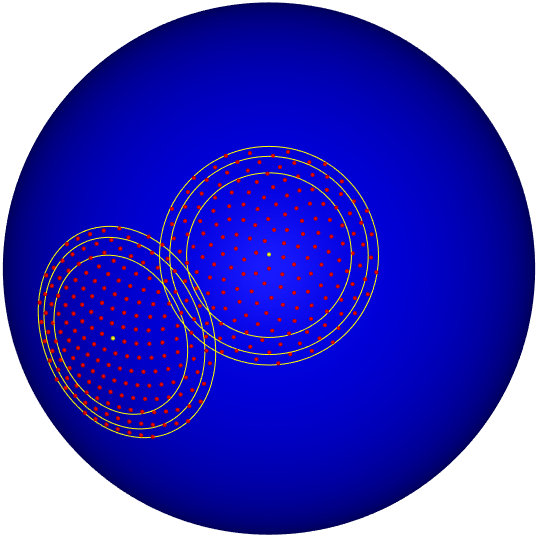}
    }
\subfigure[General view]{
\includegraphics[width=0.2\textwidth]{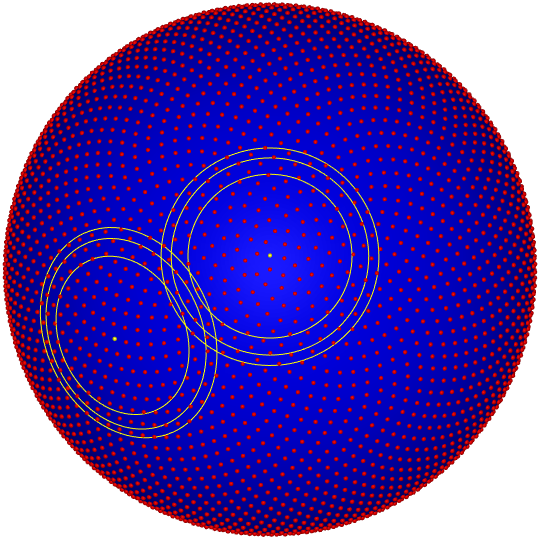}
}
\caption{\rm Spherical caps (rnn-search) from a SPD spherical $64$-design point set. (a) Partial view with points inside the caps. (b) full view with all caps and points.}
\label{Fig.cap}
\end{figure}

We next consider the denoising of three types of data: the noisy Wendland function, the ETOPO1 data, and the spherical images.

\subsection{Wendland function}
\label{subsec:wen}
For the Wendland function, we choose spherical $t_j$-design point sets with degree $t_2=64, t_1=32, t_0=16$ in SPD and SUD, $t_2=49, t_1 = 24, t_0=11$ in SID, and $t_2=54, t_1 = 26, t_0=12$ in SHD. Let $J=1$ and $J_0=0$. We have the spherical framelet system $\mathcal{F}_{J_0}^J(\eta_k,\mathcal Q)$ with different filter bank $\eta_1,\eta_2,\eta_3$. We  generate a noisy data $f_\sigma=f_4+G_{\sigma_{f_4}}$ on $X_{N_{J+1}}$ (generated by normalized Wendland function $f_4$ in \cref{eq:Phi} and Gaussian white noise $G_{\sigma_{f_4}}$ with noise level ${\sigma_{f_4}}:=\sigma\lvert f_4\rvert_{\max}$ and $\sigma\in\{0.050, 0.075, \ldots,0.175, 0.200\}$, where $|f_4|_{\max}$ is the maximal absolute value of $f_4|_{X_{N_{J+1}}}$).  After the denoising procedure as described in \cref{subsec:fmtDen}, we obtained a denoised signal $f_{\sigma,thr}$. We use SNR,  that is,  $\text{SNR}(f_4,f_{\sigma,thr}):=10\log_{10}(\frac{\lVert f_4\rVert}{\lVert f_{\sigma,thr}-f_4\rVert})$, to measure the quality of signal denoising of $f_{\sigma}$  using  filter banks $\eta_1,\eta_2,\eta_3$ and different spherical $t_j$-design point sets $X_{N_j}$.

%and project it on $\Pi_{t_J}$ space with maximum degree $t_J$ by \cref{alg:projCG}. We obtain $f_\sigma=f+g$ with  and use the 2-level semi-discrete spherical framelet system $\mathcal{F}^d_{J}(\Psi,\mathcal Q)$ with four thresholding techniques for denoising: global hard (GH), global soft (GS), local hard (LH) and local soft (LS), see \cref{subsec:thr}.

For finding the suitable constants $c,c_1$ in \cref{localsoft:w,localsoft:g}, we do a lot experiments by changing the values of $c$ and $c_1$ to see the variation of $\text{SNR}(f_4,f_{\sigma,thr})$ and conclude that $c=1,c_1=3$ are the suitable parameters. For the local-soft thresholding  with the spherical cap $r_i$, we set the cap layer orders $i=15,22,27$ (see \cref{Fig.cap} as an example) for the filter banks $\eta_1,\eta_2,\eta_3$, respectively.
%Notice that all the setting of thresholds may not obtain the best values of $\text{SNR}(f_4,F_{thr})$, but are the most general values.
We present in \cref{table:wen} the denoised results and in \cref{Fig.sigwend} the related figures using $\eta_3$ and the SPD point sets.

\begin{table}[phtb!]
\centering
\caption{\rm Wendland $f_4$ denoising results. $\text{SNR}_0=\text{SNR}(f_4,f_{\sigma})$. The row  $\eta_k$ is the final $\text{SNR}(f_4,f_{\sigma,thr})$ with respect to the denoising using the filter bank $\eta_k$ with $k=1,2,3$.
%For each thresholding methods GH, GS, LH, LS, the first row to the third row are the $\text{SNR}(f_4,F_{thr})$ values  using $\eta_1,\eta_2,\eta_3$, respectively.
}\label{table:wen}
\begin{small}
\begin{tabular}{c|c|ccccccc}
\hline
$Q_{N_{J+1},t_{J+1}}$ & $\sigma$ & 0.05 & 0.075 & 0.1 & 0.125 & 0.15 & 0.175 & 0.2 \\
\hline
\multirow{4}{*}{SPD(64)} & $\text{SNR}_0$ & \textbf{13.63} & \textbf{10.11} & \textbf{7.61} & \textbf{5.67} & \textbf{4.09} & \textbf{2.75} & \textbf{1.59} \\
\cline{2-9}
%~ & \multirow{3}{*}{GH} & 18.51 & 15.72 & 13.90 & 13.08 & 12.50 & 11.99 & 11.51 \\
%~ & ~ & 21.50 & 17.64 & 15.11 & 13.89 & 13.23 & 12.74 & 12.28 \\
%~ & ~ & 23.25 & 19.13 & 15.75 & 14.31 & 13.55 & 13.09 & 12.67 \\
%\cline{2-9}
%~ & \multirow{3}{*}{GS} & 19.10 & 16.45 & 14.65 & 13.26 & 12.13 & 11.21 & 10.43 \\
%~ & ~ & 20.50 & 17.48 & 15.43 & 13.89 & 12.66 & 11.64 & 10.80 \\
%~ & ~ & 21.29 & 18.09 & 15.90 & 14.24 & 12.93 & 11.86 & 10.98 \\
%\cline{2-9}
%~ & \multirow{3}{*}{LH} & 19.94 & 16.97 & 15.29 & 14.29 & 13.80 & 12.84 & 12.27 \\
%~ & ~ & 22.80 & 19.53 & 17.43 & 15.91 & 14.55 & 13.28 & 12.81 \\
%~ & ~ & 24.36 & 21.04 & 18.82 & 17.15 & 15.52 & 13.89 & \textbf{13.21} \\
%\cline{2-9}
~ & $\eta_1$ & 20.67 & 18.06 & 16.42 & 15.21 & 14.19 & 13.24 & 12.31 \\
~ & $\eta_2$ & 23.11 & 20.05 & 18.03 & 16.47 & 15.18 & 14.02 & 12.88 \\
~ & $\eta_3$ & \textbf{24.48} & \textbf{21.25} & \textbf{19.03} & \textbf{17.30} & \textbf{15.82} & \textbf{14.49} & \textbf{13.19} \\
\hline%\hline
\multirow{4}{*}{SUD(64)} & $\text{SNR}_0$ & \textbf{13.63} & \textbf{10.11} & \textbf{7.61} & \textbf{5.67} & \textbf{4.09} & \textbf{2.75} & \textbf{1.59} \\
%\cline{2-9}
%~ & \multirow{3}{*}{GH} & 18.56 & 15.81 & 13.69 & 12.77 & 12.15 & 11.58 & 10.87 \\
%~ & ~ & 21.57 & 17.80 & 14.74 & 13.61 & 13.07 & 12.50 & 11.90 \\
%~ & ~ & 22.97 & 19.27 & 15.30 & 14.07 & 13.47 & 12.92 & 12.34 \\
%\cline{2-9}
%~ & \multirow{3}{*}{GS} & 19.05 & 16.29 & 14.47 & 13.07 & 11.91 & 10.96 & 10.17 \\
%~ & ~ & 20.44 & 17.40 & 15.30 & 13.71 & 12.45 & 11.43 & 10.58 \\
%~ & ~ & 21.15 & 17.92 & 15.73 & 14.07 & 12.75 & 11.68 & 10.81 \\
%\cline{2-9}
%~ & \multirow{3}{*}{LH} & 19.77 & 16.85 & 15.39 & 14.49 & 13.92 & 13.03 & 12.49 \\
%~ & ~ & 22.51 & 19.34 & 17.25 & 15.82 & 14.51 & 13.36 & 12.68 \\
%~ & ~ & 23.91 & 20.48 & 18.36 & 16.84 & 15.36 & 13.84 & \textbf{13.04} \\
\cline{2-9}
~ &$\eta_1$& 20.70 & 18.02 & 16.39 & 15.23 & 14.22 & 13.20 & 12.18 \\
~ & $\eta_2$ & 22.98 & 19.98 & 17.95 & 16.38 & 15.09 & 13.90 & 12.70 \\
~ & $\eta_3$ & \textbf{24.15} & \textbf{20.90} & \textbf{18.73} & \textbf{17.07} & \textbf{15.63} & \textbf{14.29} & \textbf{12.97} \\
\hline%\hline
\multirow{4}{*}{SID(49)} & $\text{SNR}_0$ & \textbf{13.63} & \textbf{10.11} & \textbf{7.61} & \textbf{5.67} & \textbf{4.09} & \textbf{2.75} & \textbf{1.59} \\
%\cline{2-9}
%~ & \multirow{3}{*}{GH} & 18.61 & 14.60 & 12.60 & 11.26 & 9.35 & 7.69 & 7.10 \\
%~ & ~ & 22.22 & 18.39 & 15.40 & 12.96 & 10.18 & 8.32 & 7.69 \\
%~ & ~ & 23.51 & 20.02 & 16.94 & 14.18 & 10.92 & 8.69 & 7.96 \\
%\cline{2-9}
%~ & \multirow{3}{*}{GS} & 17.07 & 14.04 & 12.04 & 10.57 & 9.41 & 8.43 & 7.58 \\
%~ & ~ & 18.62 & 15.37 & 13.12 & 11.42 & 10.06 & 8.94 & 7.98 \\
%~ & ~ & 19.25 & 15.94 & 13.62 & 11.84 & 10.42 & 9.24 & 8.26 \\
%\cline{2-9}
%~ & \multirow{3}{*}{LH} & 19.92 & 16.44 & 14.03 & 12.65 & 11.61 & 10.88 & 10.25 \\
%~ & ~ & 23.44 & 19.98 & 17.43 & 15.78 & 14.30 & 13.10 & 11.94 \\
%~ & ~ & \textbf{24.66} & \textbf{21.21} & \textbf{18.82} & \textbf{17.00} & \textbf{15.39} & \textbf{14.02} & \textbf{12.79} \\
\cline{2-9}
~ & $\eta_1$ & 20.13 & 16.86 & 14.70 & 13.15 & 11.91 & 10.82 & 9.77 \\
~ & $\eta_2$ & 23.34 & 19.90 & 17.44 & 15.51 & 13.90 & 12.43 & 11.05 \\
~ & $\eta_3$ & \textbf{24.54} & \textbf{21.03} & \textbf{18.51} & \textbf{16.46} & \textbf{14.68} & \textbf{13.09} & \textbf{11.59} \\
\hline%\hline
\multirow{4}{*}{SHD(54)} & $\text{SNR}_0$ & \textbf{13.58} & \textbf{10.05} & \textbf{7.55} & \textbf{5.62} & \textbf{4.03} & \textbf{2.69} & \textbf{1.53} \\
%\cline{2-9}
%~ & \multirow{3}{*}{GH} & 17.83 & 14.74 & 12.29 & 10.17 & 8.97 & 8.14 & 7.52 \\
%~ & ~ & 21.19 & 17.42 & 14.05 & 11.19 & 9.70 & 8.81 & 8.10 \\
%~ & ~ & 22.70 & 18.63 & 15.29 & 11.95 & 10.09 & 9.16 & 8.41 \\
%\cline{2-9}
%~ & \multirow{3}{*}{GS} & 17.02 & 14.04 & 12.03 & 10.52 & 9.32 & 8.33 & 7.51 \\
%~ & ~ & 18.36 & 15.11 & 12.88 & 11.21 & 9.87 & 8.79 & 7.91 \\
%~ & ~ & 18.93 & 15.61 & 13.31 & 11.57 & 10.18 & 9.04 & 8.11 \\
%\cline{2-9}
%~ & \multirow{3}{*}{LH} & 19.54 & 15.86 & 14.02 & 12.60 & 11.40 & 10.32 & 9.34 \\
%~ & ~ & 22.14 & 18.56 & 16.06 & 14.40 & 13.09 & 11.77 & 10.55 \\
%~ & ~ & \textbf{23.23} & 19.69 & 17.16 & 15.25 & 13.85 & \textbf{12.55} & \textbf{11.25} \\
\cline{2-9}
~ & $\eta_1$ & 19.91 & 16.68 & 14.64 & 13.07 & 11.76 & 10.60 & 9.50 \\
~ & $\eta_2$ & 22.21 & 18.80 & 16.42 & 14.57 & 13.00 & 11.59 & 10.28 \\
~ & $\eta_3$ & \textbf{23.21} & \textbf{19.70} & \textbf{17.22} & \textbf{15.26} & \textbf{13.59} & \textbf{12.07} & \textbf{10.64} \\
\hline
\end{tabular}
\end{small}
\end{table}

As we can see from \cref{table:wen}, the performance of different filter banks follows $\eta_3>\eta_2>\eta_1$, which means the more high pass filters the system $\mathcal{F}_{J_0}^J(\eta_k,\mathcal Q)$ has, the better performance it gives in denoising.
%One interesting discovery is that the initial icosahedron point sets of spherical $t$-designs (SID) starting from $t=49$ (the smallest degree as finest scale) are adapted with local hard threshold (LH) with good performance for noise level with $\sigma=0.05$ to $0.1$.
Moreover, the performance of using the SPD point sets is in general better than using the SUD point sets under the same noise level and same number of points. For SID and SHD, the number of  points and degree $t$ have to follow certain restriction, while SPD points can be easily generated with any $t\in\N$.  In view of these observations, we fixed the filter bank to be $\eta_3$ and the spherical $t$-design point sets to be the SPD point sets in the following  experiments.

\begin{figure}[htpb!]
\centering
\subfigure[$f_4$]{
\includegraphics[width=0.25\textwidth]{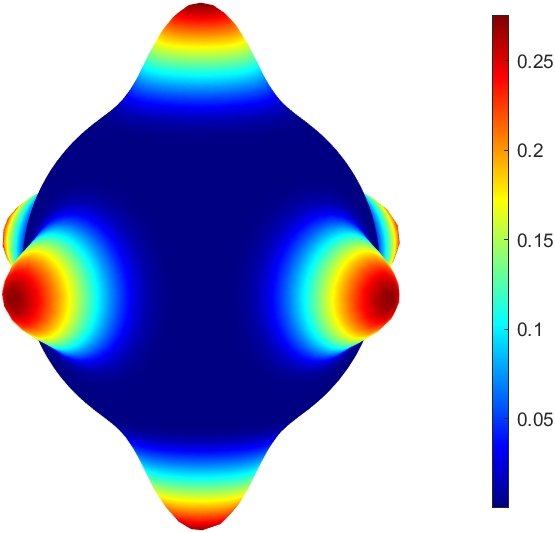}
}
\subfigure[$f_{\sigma}$]{
\includegraphics[width=0.25\textwidth]{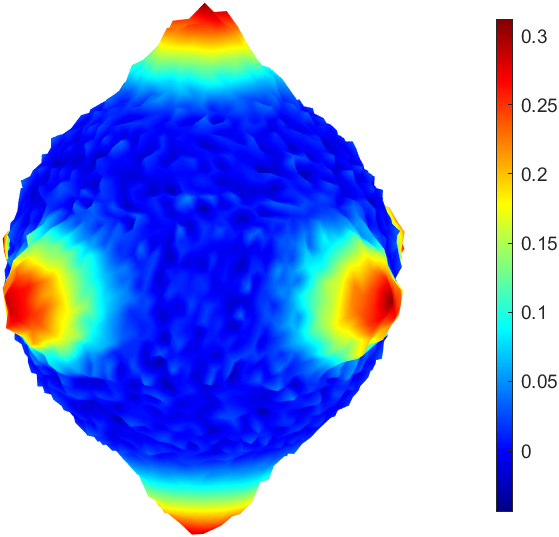}
}
\subfigure[$f$]{
\includegraphics[width=0.25\textwidth]{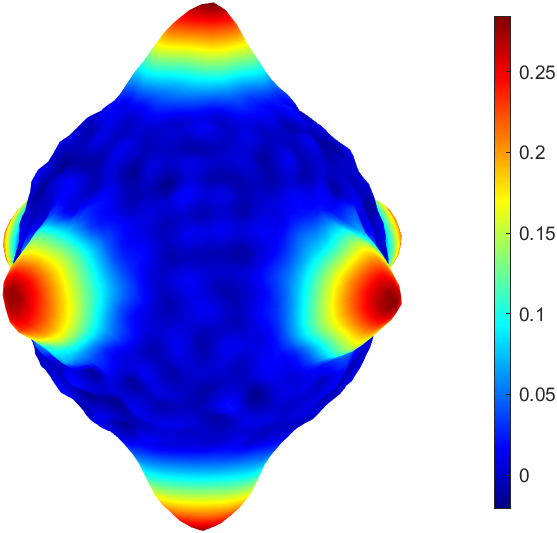}
}
\subfigure[$f_{\sigma,thr}$]{
\includegraphics[width=0.25\textwidth]{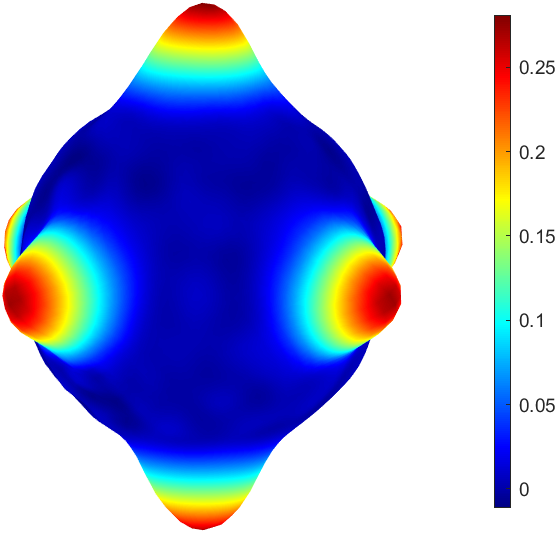}
}
\subfigure[$f_4-f_{\sigma,thr}$]{
\includegraphics[width=0.25\textwidth]{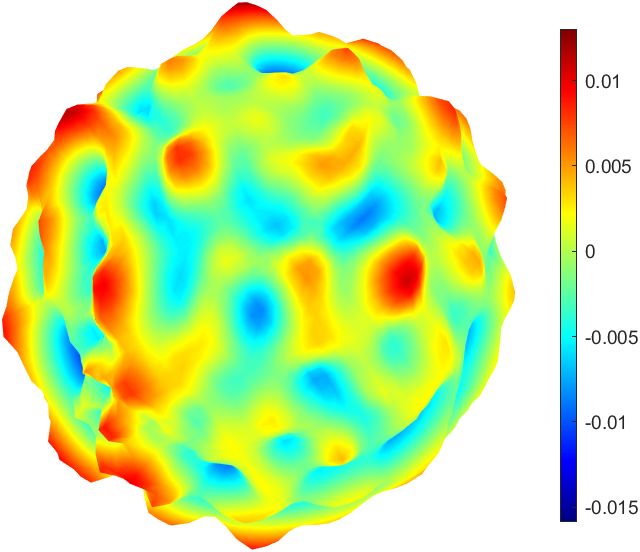}
}
\subfigure[$g$]{
\includegraphics[width=0.25\textwidth]{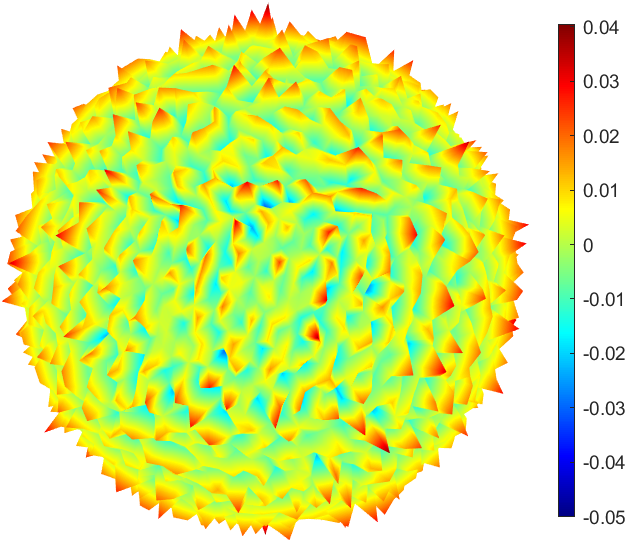}
}
\caption{\rm The behavior of denoising Wendland $f_{\sigma,thr}$ with $\sigma=0.05$ by $\eta_3$ with LS on SPD with $t_0=16$, $t_1=32$, and $t_2=64$. }
\label{Fig.sigwend}
\end{figure}

\subsection{ETOPO}
\label{subsec:etopo}
We next discuss the denosing of the ETOPO1 data \cite{amante2009etopo1}. It is  an elevation dataset of the earth, which includes the elevation information on $\sph^2$ sampled on a grid $X_E$ of $10800\times 21600$ points.  The ETOPO1 is a spherical geometry data formed by equal distributed position. That is, the grid is given by $X_E:=\{(\theta_i,\phi_j)\in[0,\pi]\times [0,2\pi)\setsep i=1,\ldots,10800,j=1,\ldots,21600\}$  with $\theta_i = (i-1)\Delta$, $\phi_j=(j-1)\Delta$ and $\Delta=\frac{\pi}{10800}$.  For a spherical point set $X_N$, we  can easily resample   the ETOPO1 data on $X_E$ to a data on $X_N$ by finding the $\bm x(\theta,\phi)\in X_N$  with respect to the nearest ETOPO1 index according to $ i_{\bm x}=\lceil\frac{\phi}{\Delta}\rceil$ and $j_{\bm x}=\lceil\frac{\theta}{\Delta}\rceil$, where $\lceil\cdot\rceil$ is the ceiling operator. Thus, for a given $X_{N_{J+1}}$,  we can obtain a ETOPO1 data on $X_{N_{J+1}}$ by $f_o(\bm x)=\text{ETOPO1}( i_{\bm x}, j_{\bm x})$, $\bm x\in X_{N_{J+1}}$, where $\text{ETOPO1}(i,j)$ denotes the $(i,j)$-entry of the ETOPO1 data.

We generate the noisy ETOPO1 data $f_\sigma=f_o+G_{\sigma_{f_o}}$ on $X_{N_{J+1}}$ with noise level $\sigma_{f_o}=\sigma\lvert f_0\rvert_{\max}$ for $\sigma\in\{0.050,0.075, \ldots, 0.175, 0.200\}$. Given a group of spherical $t$-design point sets $X_{N_j}$ (SPD) with degrees $t_0=256, t_1=512, t_2=1024$, we have the spherical framelet system $\mathcal{F}^J_{J_0}(\eta,\mathcal Q)$ ($J_0=0,J=1$) with $\eta=\eta_3$.
%three types of filter banks $\eta_1,\eta_2,\eta_3$.
We do a lot of experiments  to fix  $c=c_1=0.6$ and the spherical cap layer orders $i=12$ in the cap radius $r_i$.
% for all three filters $\eta_1,\eta_2,\eta_3$.
After the denoising procedure as described in \cref{subsec:fmtDen}, we obtained a denoised signal $f_{\sigma,thr}$.
 %We show in \cref{Fig.etopo.cc1}  an example for denoising $f_{0.05}$.
We use $\mathrm{SNR}(f_o,f_{\sigma,thr})=10\log_{10}(\frac{\lVert f_o\rVert}{\lVert f_{\sigma,thr}-f_o\rVert})$ for measuring the quality of denoising. The results are presented  in \cref{table:etopo1}.

%\begin{figure}[htpb!]
%\centering
%\subfigure[3D view]{
%\includegraphics[width=0.25\textwidth]{sig005_layer12_cc1.png}
%}
%\subfigure[Top view]{
%\includegraphics[width=0.25\textwidth]{sig005_layer12_cc1_top.png}
%}
%\caption{\rm The SNR with respect to different  $c$ and $c_1$ in denoising $f_{0.05}$ by $\eta_3$ on SPD for $t_0=64, t_1=128, t_2=256$ with  cap layer order $i=12$.}
%\label{Fig.etopo.cc1}
%\end{figure}

\begin{table}[htpb!]
\centering
\caption{\rm ETOPO1 denoising results with respect to different noise level  $\sigma$ with filter bank $\eta_3$. The row $\rm{SNR}_0$ is the initial SNR between $f_\sigma$ and $f_o$. The row  $\eta_3$ is the final $\text{SNR}(f_o,f_{\sigma,thr})$ with respect to the denoising using the filter bank $\eta_3$.}\label{table:etopo1}
\begin{small}
\begin{tabular}{c|ccccccc}
\hline
$\sigma$ & 0.05 & 0.075 & 0.1 & 0.125 & 0.15 & 0.175 & 0.2 \\
\hline
$\text{SNR}_0$ & \textbf{16.38} & \textbf{12.85} & \textbf{10.36} & \textbf{8.42} & \textbf{6.83} & \textbf{5.50} & \textbf{4.34} \\
\hline
%$\eta_1$ & 22.23 & 20.27 & 18.88 & 17.85 & 17.06 & 16.42 & 15.85 \\
%$\eta_2$ & 22.34 & 20.38 & 18.97 & 17.91 & 17.09 & 16.44 & 15.88 \\
$\eta_3$ & \textbf{22.36} & \textbf{20.41} & \textbf{19.01} & \textbf{17.95} & \textbf{17.12} & \textbf{16.45} & \textbf{15.92} \\
\hline
\end{tabular}
\end{small}
\end{table}

%As we can see from the table, for $\sigma=0.05$ to $0.2$, the behavior shows that $\eta_3>\eta_2>\eta_1$, which is the same as \cref{subsec:wen}.
From \cref{table:etopo1}, we can see that the denoising procedure does increase the SNR of the denoised signal up to $11.6$ dB.
We demonstrate in \cref{Fig.etopo} the figures for the ground truth signal $f_o$, its noisy version $f_\sigma$ for $\sigma=0.05$, and the reconstruction denoised signal  $f_{\sigma, thr}$. The final denoised data increase $5.98$ dB than the noisy data.  We further show in \cref{Fig.etopo.coef} the framelet coefficient sequences $\{\bm v_0, \bm w_j^{(s)}\setsep j=0,1;s=1,2,3\}$ of $f$ in the projection decomposition of $f_\sigma = f+g$ for some $f\in \Pi_{t_{J}}$ with $t_J=512$ by the truncated system $\mathcal{F}_{J_0}^{J}(\eta,\mathcal Q)$. One can see that the coefficient sequence $\bm w_j^{(s)}$ for $j=0,1; s=1,2,3$ do contain significant noise from the original data. This confirms the effectiveness of using the multiscale system to extract noise from a noisy data on the sphere.

\begin{figure}[htpb!]
\centering
\subfigure[$f_o$]{
\begin{minipage}[t]{0.25\linewidth}
\centering
\includegraphics[width=1\textwidth]{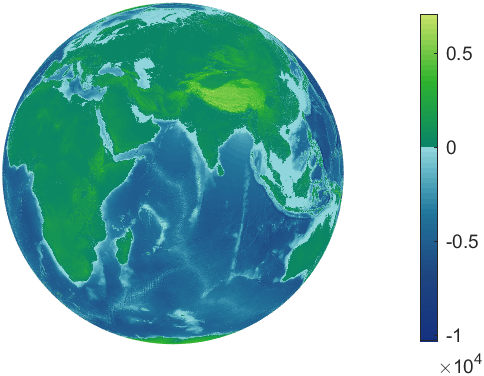}
\includegraphics[width=1\textwidth]{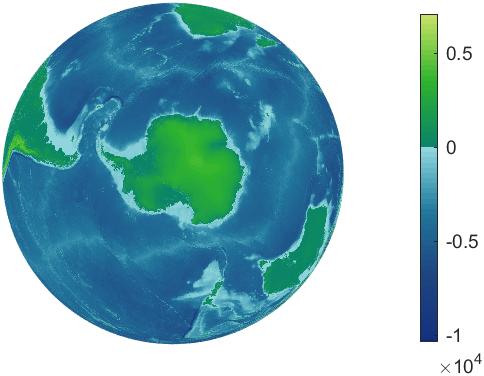}
\end{minipage}
}
\subfigure[$f_{\sigma}$]{
\begin{minipage}[t]{0.25\linewidth}
\centering
\includegraphics[width=1\textwidth]{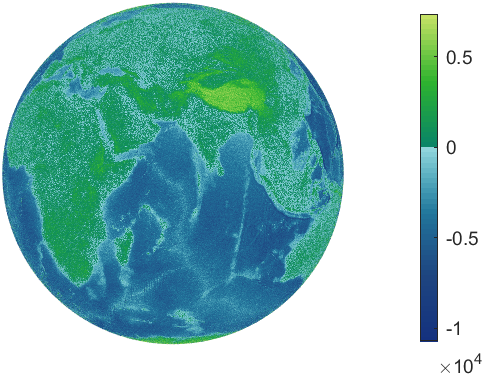}
\includegraphics[width=1\textwidth]{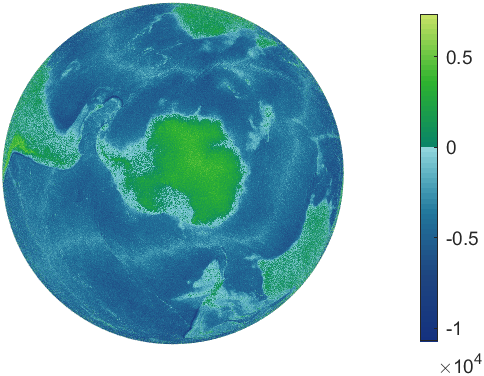}
\end{minipage}
}
\subfigure[$f_{\sigma,thr}$]{
\begin{minipage}[t]{0.25\linewidth}
\centering
\includegraphics[width=1\textwidth]{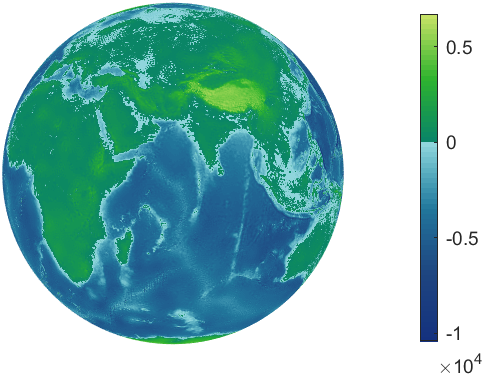}
\includegraphics[width=1\textwidth]{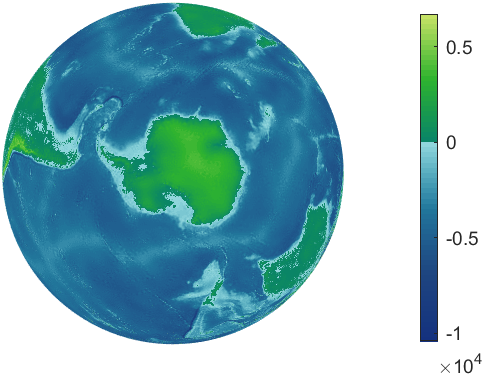}
\end{minipage}
}
\caption{\rm The behavior of denoising ETOPO1 $f_{\sigma}$ for $\sigma=0.05$ by $\eta_3$ on SPD with $t_0=256,t_1=512,t_2=1024$. Top 3: north view. Bottom 3:  south view. }
\label{Fig.etopo}
\end{figure}

\begin{figure}[htpb!]
\centering
\subfigure[$\bm w_1^{(1)}$]{
\includegraphics[width=0.22\textwidth]{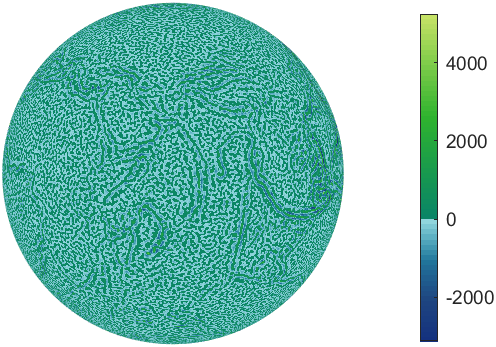}
}
\subfigure[$\bm w_1^{(2)}$]{
\includegraphics[width=0.22\textwidth]{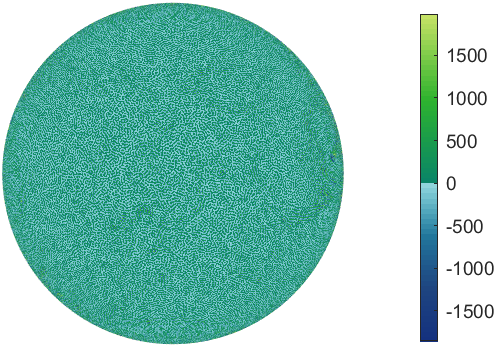}
}
\subfigure[$\bm w_1^{(3)}$]{
\includegraphics[width=0.22\textwidth]{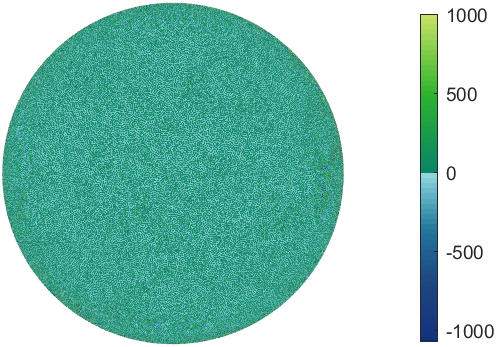}
}
\\
\subfigure[$\bm w_0^{(1)}$]{
\includegraphics[width=0.22\textwidth]{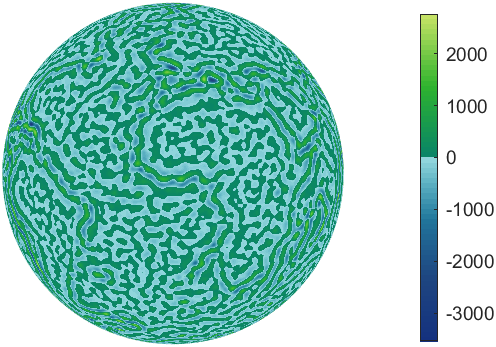}
}
\subfigure[$\bm w_0^{(2)}$]{
\includegraphics[width=0.22\textwidth]{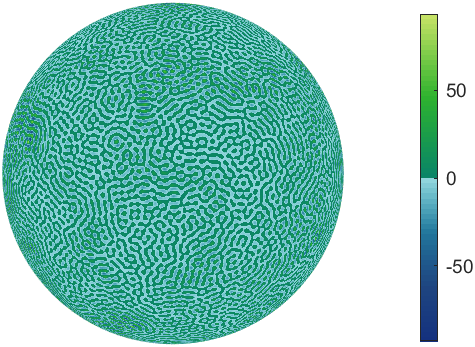}
}
\subfigure[$\bm w_0^{(3)}$]{
\includegraphics[width=0.22\textwidth]{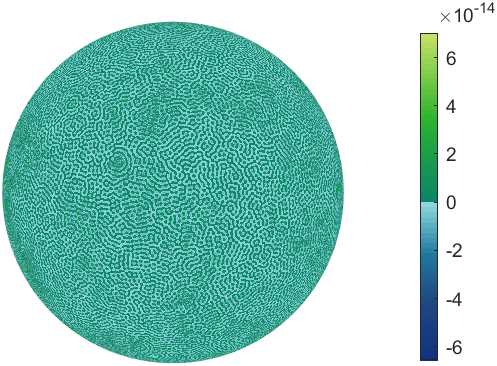}
}
\subfigure[$\bm v_0$]{
\includegraphics[width=0.22\textwidth]{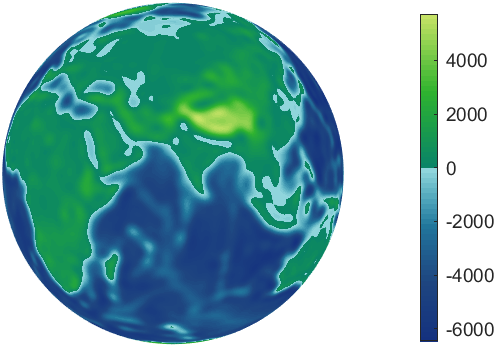}
}
\caption{\rm The 2-levels framelet decomposition for ETOPO $f_{\sigma}$ with $\sigma=0.05$ by $\eta_3$ on SPD with $t_0=256,t_1=512,t_2=1024$. $\bm w_j^{(s)}$ is with respect to the $s$-th high pass filter at the $j$-th level decomposition. $\bm v_0$ is with respect to the low-pass filter decomposition in the coarsest level.  }
\label{Fig.etopo.coef}
\end{figure}

\subsection{Spherical images}
\label{subsec:img}
We finally discuss the denoising of spherical images.
For given a  gray scale image $\mathrm{IMG}$ (pixel value range in $[0,255]$) of size $m\times n$, similar to the ETOPO1, we identify it as a spherical data on the grid $X_G=\{(\theta_i,\phi_j): i=1,\ldots,m,j=1,\ldots,n\}\subset [0,\pi]\times [0,2\pi)$ with  $\theta_i = (i-1)\Delta_\theta$, $\phi_j=(j-1)\Delta_\phi$ and $\Delta_\theta=\frac{\pi}{m}$, $\Delta_\phi=\frac{2\pi}{n}$.  For a spherical point sets $X_N$, we  can easily resample   the image data on $X_G$ to a data on $X_N$ by finding the $\bm x(\theta,\phi)\in X_N$  with respect to the nearest image index by $i_{\bm x}=\lceil\frac{\phi}{\Delta_\theta}\rceil$ and $j_{\bm x}=\lceil\frac{\theta}{\Delta_\phi}\rceil$. Thus, for a given $X_{N_{J+1}}$,  we can obtain a spherical image data on $X_{N_{J+1}}$ by $f_G(\bm x)=\mathrm{IMG}(i_{\bm x},j_{\bm x})$, $\bm x\in X_{N_{J+1}}$, where $\mathrm{IMG}(i,j)$ is the $(i,j)$-entry of the image.

\begin{figure}[htpb!]
\centering
\subfigure[Barbara]{
\begin{minipage}[t]{0.15\linewidth}
\centering
\includegraphics[width=1\textwidth]{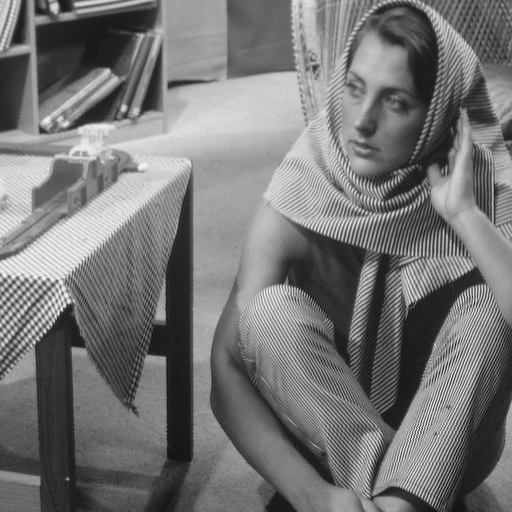}
\includegraphics[width=1\textwidth]{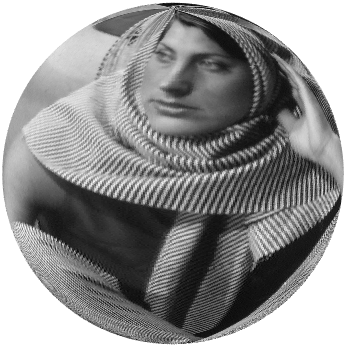}
\end{minipage}
}
%\subfigure[Lena]{
%\begin{minipage}[t]{0.11\linewidth}
%\centering
%\includegraphics[width=1\textwidth]{Lena.png}
%\includegraphics[width=1\textwidth]{Lena_sphere.png}
%\end{minipage}
%}
\subfigure[Boat]{
\begin{minipage}[t]{0.15\linewidth}
\centering
\includegraphics[width=1\textwidth]{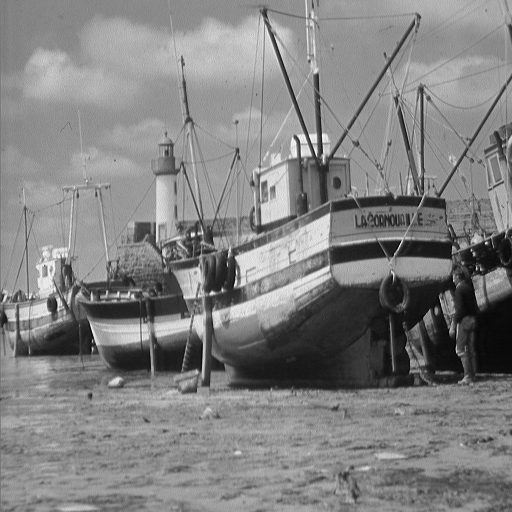}
\includegraphics[width=1\textwidth]{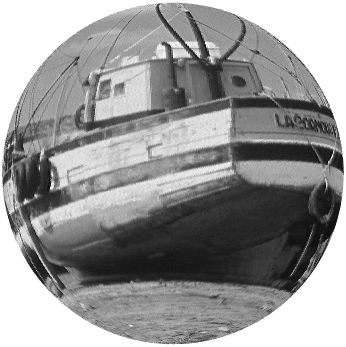}
\end{minipage}
}
\subfigure[Mandrill]{
\begin{minipage}[t]{0.15\linewidth}
\centering
\includegraphics[width=1\textwidth]{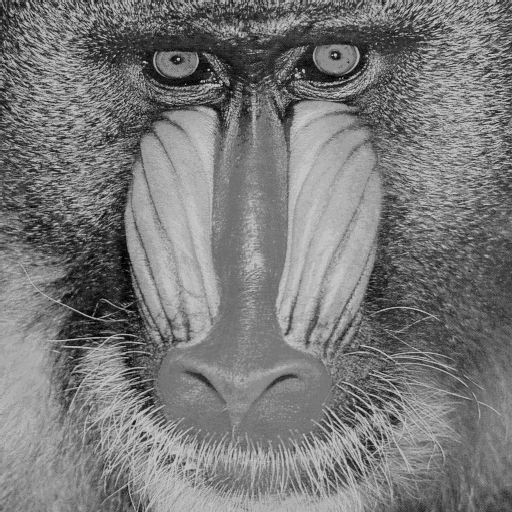}
\includegraphics[width=1\textwidth]{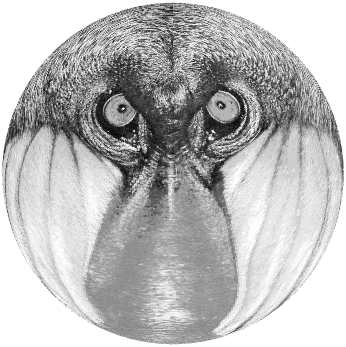}
\end{minipage}
}
\subfigure[Hill]{
\begin{minipage}[t]{0.15\linewidth}
\centering
\includegraphics[width=1\textwidth]{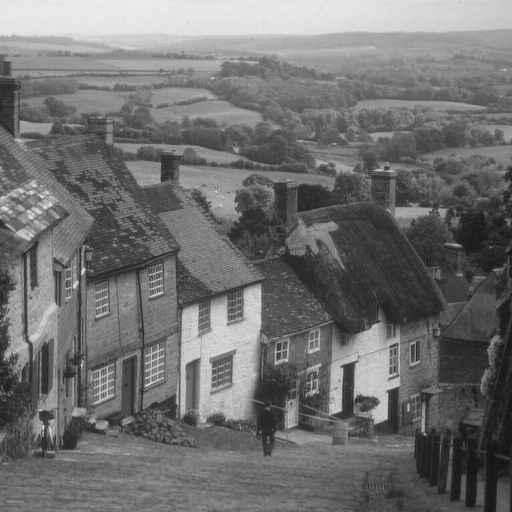}
\includegraphics[width=1\textwidth]{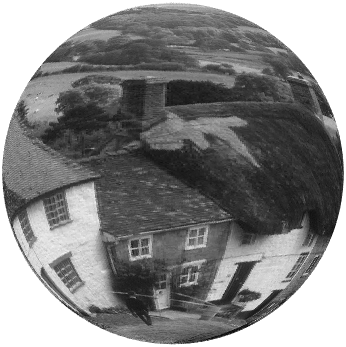}
\end{minipage}
}
\subfigure[Man]{
\begin{minipage}[t]{0.15\linewidth}
\centering
\includegraphics[width=1\textwidth]{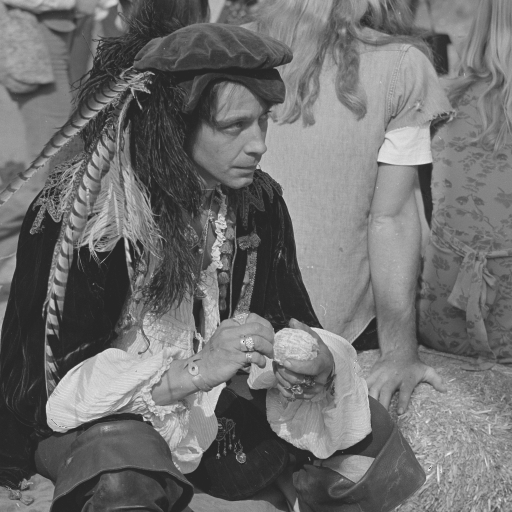}
\includegraphics[width=1\textwidth]{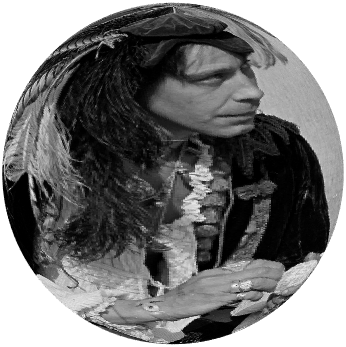}
\end{minipage}
}
\caption{\rm Project images by spherical $1024$-design point set (SPD) on $\mathbb S^2$. Top: original image. Bottom: spherical image. }
\label{Fig.img}
\end{figure}

We use $512\times 512$ pixels classical images Barbara, Boat, Mandrill, Hill and Man as the input data to generate spherical image data $f_0$ by the above procedure, see \cref{Fig.img}. Given spherical $t$-design point sets $X_{N_j}$ (SPD) corresponding to degree $t_0=256, t_1=512, t_2=1024$. Let $J_0=0$ and $J=1$. The noisy spherical image data $f_\sigma=f_0+G_{\sigma\cdot 255}$ on $X_{N_{J+1}}$ with  $\sigma\in\{0.05, 0.075,\ldots,0.175, 0.2\}$.  We have the
 spherical framelet system $\mathcal{F}^J_{J_0}(\eta,\mathcal Q)$ with  filter banks $\eta=\eta_3$ and local-soft thresholding method (LS) with the setting of $c=0.6,c_1=0.5$ and the spherical cap layer order $i=23$. We apply the denoising procedure as above to obtain the denoised signal $f_{\sigma,thr}$. We use PSNR measure the quality of image denoising, which is $\text{PSNR}(f_G,f_{\sigma, thr}):=10\log_{10}(\frac{255^2}{\text{MSE}})$ and $\text{MSE}$ is the mean squared error which defined as $\text{MSE}=\frac{1}{N_{J+1}}\sum_{\bm x\in X_{N_{J+1}}}\left|f_G(\bm x)-f_{\sigma,thr}(\bm x)\right|^2$. We show the results in \cref{table:img}.

\begin{table}[phtb!]
\centering
\caption{\rm Images denoising results. For each images, the first row is $\text{PSNR}_0:=\text{PSNR}(f_G,f_\sigma)$, and the second row to the fourth row is: $\text{PSNR}(f_G,f_{\sigma,thr})$ values  using $\eta_3$.}\label{table:img}
\begin{small}
\begin{tabular}{c|c|ccccccc}
\hline
Image &$\sigma$  & 0.05 & 0.075 & 0.1 & 0.125 & 0.15 & 0.175 & 0.2 \\
\hline
\multirow{2}{*}{Barbara}& $\text{PSNR}_0$ & \textbf{26.34} & \textbf{22.81} & \textbf{20.31} & \textbf{18.38} & \textbf{16.79} & \textbf{15.45} & \textbf{14.29} \\
\cline{2-9}
%~ & 30.53 & 28.18 & 26.64 & 25.52 & 23.46 & 24.01 & 23.46 \\
%~ & 30.75 & 28.44 & 26.93 & 25.83 & 24.98 & 24.31 & 23.74 \\
~  & $\eta_3$& \textbf{30.84} & \textbf{28.56} & \textbf{27.07} & \textbf{25.97} & \textbf{25.12} & \textbf{24.45} & \textbf{23.87} \\
\hline
%\multirow{2}{*}{Lena} & $\text{PSNR}_0$ & \textbf{26.37} & \textbf{22.85} & \textbf{20.35} & \textbf{18.41} & \textbf{16.83} & \textbf{15.49} & \textbf{14.33} \\
%\cline{2-9}
%%~ & 33.04 & 31.07 & 29.62 & 28.46 & 27.47 & 26.64 & 25.98 \\
%%~ & 33.31 & 31.34 & 29.86 & 28.65 & 27.60 & 26.71 & 25.99 \\
%~  & $\eta_3$& \textbf{33.39} & \textbf{31.44} & \textbf{29.95} & \textbf{28.72} & \textbf{27.66} & \textbf{26.75} & \textbf{26.01} \\
%\hline
\multirow{2}{*}{Boat}& $\text{PSNR}_0$  & \textbf{26.02} & \textbf{22.50} & \textbf{20.00} & \textbf{18.06} & \textbf{16.48} & \textbf{15.14} & \textbf{13.98} \\
\cline{2-9}
%~ & 31.05 & 28.92 & 27.45 & 26.26 & 25.29 & 24.49 & 23.86 \\
%~ & 31.37 & 29.28 & 27.78 & 26.55 & 25.52 & 24.66 & 23.99 \\
~ & $\eta_3$ & \textbf{31.45} & \textbf{29.39} & \textbf{27.90} & \textbf{26.66} & \textbf{25.62} & \textbf{24.74} & \textbf{24.05} \\
\hline
\multirow{2}{*}{Mandrill} & $\text{PSNR}_0$  & \textbf{28.18} & \textbf{24.66} & \textbf{22.16} & \textbf{20.22} & \textbf{18.63} & \textbf{17.30} & \textbf{16.14} \\
\cline{2-9}
%~ & 30.35 & 27.77 & 26.05 & 24.80 & 23.86 & 23.16 & 22.63 \\
%~ & 30.42 & 27.88 & 26.19 & 24.96 & 24.04 & 23.36 & 22.85 \\
~ & $\eta_3$& \textbf{30.43} & \textbf{27.90} & \textbf{26.23} & \textbf{25.00} & \textbf{24.08} & \textbf{23.40} & \textbf{22.89} \\
\hline
\multirow{2}{*}{Hill} & $\text{PSNR}_0$ & \textbf{26.70} & \textbf{23.17} & \textbf{20.68} & \textbf{18.74} & \textbf{17.15} & \textbf{15.81} & \textbf{14.65} \\
\cline{2-9}
%~ & 31.34 & 29.24 & 27.82 & 26.82 & 26.12 & 25.61 & 25.22 \\
%~ & 31.64 & 29.57 & 28.12 & 27.07 & 26.31 & 25.75 & 25.31 \\
~ & $\eta_3$& \textbf{31.71} & \textbf{29.66} & \textbf{28.21} & \textbf{27.16} & \textbf{26.39} & \textbf{25.81} & \textbf{25.35} \\
\hline
\multirow{2}{*}{Man} & $\text{PSNR}_0$ & \textbf{26.51} & \textbf{22.99} & \textbf{20.49} & \textbf{18.55} & \textbf{16.97} & \textbf{15.63} & \textbf{14.47} \\
\cline{2-9}
%~ & 31.85 & 29.62 & 28.13 & 27.00 & 26.12 & 25.44 & 24.91 \\
%~ & 32.12 & 29.90 & 28.39 & 27.22 & 26.30 & 25.56 & 24.99 \\
~& $\eta_3$ & \textbf{32.18} & \textbf{29.97} & \textbf{28.46} & \textbf{27.28} & \textbf{26.35} & \textbf{25.61} & \textbf{25.02} \\
\hline
\end{tabular}
\end{small}
\end{table}

From the table, we conclude that the (semi-discrete) spherical tight framelets with local-soft threshold method based on spherical $t$-design point sets do provide effective results in denoising and reconstruction.

\section{Conclusions and final remarks}
\label{sec:conclusions}
In this paper, starting from numerically solving a minimization problem, we use a variational characterization of the spherical $t$-design $A_{N,t}$ to find spherical $t$-designs with large value $t$ using the trust-region method. We use the obtained spherical $t$-designs for function approximation and build spherical tight framelet systems. Especially, we construct  truncated spherical tight framelet systems for discrete spherical signal processing. Several numerical experiments demonstrate the efficiency and effectiveness of our spherical framelet systems in processing signals or images on the sphere. We remark that the truncated systems are not studied in \cite{wang2020tight}, which play the key role for discrete signal processing  here. Comparing to \cite{graf2011computation}, we use the trust-region method instead of line-search method and  do not need to refer to the manifold versions of the gradient and Hessian.

The polynomial-exactness of the spherical $t$-designs plays a key role in the construction of spherical tight framelet systems and their truncated versions. The fast framelet transforms and the multi-scale structure of the framelet systems provide efficient separation of noise from the noisy spherical signals. As one can see  from our numerical experiments, the noise are spread in both $f$ and $g$ in the decomposition $f_\sigma = f+g$. In practice, one can only process $f$ up to certain polynomial approximation space $\Pi_t$ by the truncated system,  while the part $g$ could be spread over the higher frequency spectrum. The noise might not be well-suppressed  in the part $g$ in our denoising procedure (see e.g., \cref{Fig.sigwend}). We shall consider in future the further improvement of the denoising of $g$. Moreover, the quadrature rule sequence $\mathcal{Q}$ is not nested in general. It would be nice to have nested quadrature rule sequences for spherical tight framelets in view of the multilevel structure of the traditional framelet systems on the Euclidean domain for the usual image processing (of grid data).

%\section*{Acknowledgments}
%We would like to acknowledge anonymous referees for valuable comments on this paper.

%\section*{Appendix}
%appendix

\bibliographystyle{siamplain}
%\bibliography{references}

\end{document}